\newcommand{\fakeenv}{} 
\newenvironment{restate}[2]  
{
  \renewcommand{\fakeenv}{#2}   
                                
  \theoremstyle{plain}
  \newtheorem*{\fakeenv}{#1~\ref{#2}} 
  \begin{\fakeenv}  
}
{ \end{\fakeenv} }
 \subjclass[2020]{
        57K10,
        20F65}
    \keywords{Thompson's group, Alexander theorem, connected sum, pointed links}
\newcommand{\N}{\mathbb{N}}
\newcommand{\Z}{\mathbb{Z}}
\newcommand{\cK}{\mathcal{K}} 
\newcommand{\cL}{\mathcal{L}}
\DeclareMathOperator{\Aut}{Aut}
\DeclareMathOperator{\id}{id}
\DeclareMathOperator{\lk}{lk}   % Linking number
\theoremstyle{plain}
\declaretheorem[name=Theorem,sibling=theorem,numberwithin=section]{theorem}
\declaretheorem[name=Theorem,sibling=theorem*,numbered=no]{theorem*}
\declaretheorem[name=Proposition,sibling=theorem]{proposition}
\declaretheorem[name=Lemma,sibling=theorem]{lemma}
\declaretheorem[name=Corollary,sibling=theorem]{corollary}
\theoremstyle{definition}
\declaretheorem[name=Definition, sibling=theorem]{definition}
\declaretheorem[name=Remark,sibling=theorem]{remark}
\declaretheorem[name=Example,sibling=theorem]{example}
\title[Constructing Thompson representatives]{Constructing Thompson representatives via pointed links}
\author{Susanna Terron}
\address{School of Mathematics and Statistics, University of Glasgow, Glasgow G12 8QQ, UK}
\email{s.terron.1@research.gla.ac.uk}
\begin{document}

\begin{abstract}
    We extend Jones' construction to obtain a surjective map from the Brown--Thompson group $F_3$ to the set of pointed links up to pointed isotopy. We then introduce an operation on~$F_3$, and use it to define a new monoid $(F_3, \diamond)$, called the central monoid. Using the extended version of Jones' construction, we obtain a surjective monoid homomorphism from the central monoid to the monoid of pointed links with connected sum. This allows us to introduce a standard form for connected sum representatives in~$F_3$, and we extend this construction to a certain family of links by defining disjoint union and linking moves on $F_3$.
\end{abstract}

\maketitle

\section{Introduction}

Inspired by questions arising from conformal field theory, Vaughan Jones introduced a construction associating links to elements of Thompson's group $F$ \cite{Jones2017} and its generalisation $F_3$ \cite{Jones2019}. In analogy to the theory relating braid groups and knot theory, this construction gives a map $F \to \mathsf{Links}$ similar to the closure map~$\bigsqcup_n B_n \to \mathsf{Links}$. In the case of braids, surjectivity of the map is guaranteed by the Alexander Theorem, and a set of moves sufficient to establish when two braids give the same link is described by the Markov Theorem. 

In his paper Jones proved an Alexander type theorem showing that all links can be obtained from an element of $F$ \cite[Theorem 5.3.1]{Jones2017}, proving that the map above is surjective. The question of when two group elements result in the same link is still unanswered. Maintaining the parallel with braid groups, answering this question would result in a Markov type theorem for either $F$ or $F_3$. We call these elements Thompson representatives of the link. 

Various authors have explored the connection between Thompson's group and knot theory, see for example \cite{aiello2020}, \cite{AielloNagnibeda2022}, \cite{AielloBaader2024}, \cite{liles2024annularlinksthompsonsgroup},  \cite{krushkal2024thompsonsgroupftangles} and \cite{RushilSweeney}. The aim of this paper is to introduce a standard form for representatives of certain links in the Brown--Thompson group~$F_3$ by defining operations on the group that correspond to operations on links. 

We first extend Jones' construction to a map $\cL_*$ with image in the set of pointed links up to pointed isotopy, $\mathsf{Links}_*$, and prove the following theorem.
\renewcommand{\thetheorem}{\Alph{theorem}}
\begin{theorem}
\label{thm:surjective on pointed}
    The map $\cL_*\colon F_3 \to \mathsf{Links}_*$ is surjective. 
\end{theorem}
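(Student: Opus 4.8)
The plan is to leverage the unpointed case and then control the marked point. Forgetting the basepoint, $\cL_*$ descends to Jones' map $\cL\colon F_3 \to \mathsf{Links}$, which is surjective by the $F_3$ analogue of Jones' Alexander theorem \cite{Jones2019} (in the spirit of \cite[Theorem 5.3.1]{Jones2017}). Thus every link type is already realised, and the only new content is that the canonical basepoint carried by $\cL_*(g)$ can be placed on an arbitrary marked component in an arbitrary position. I would therefore reduce the statement to two claims: (i) within a fixed component the position of the basepoint is immaterial up to pointed isotopy, so it suffices to prescribe the marked \emph{component}; and (ii) for any link $L$ and any chosen component $C$, there is a $g \in F_3$ with $\cL(g) \cong L$ whose canonical basepoint lies on $C$.

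First I would pin down where the basepoint of $\cL_*(g)$ lives: by construction it sits on the distinguished arc of the diagram running along the baseline of the ternary-tree picture, so claim (ii) amounts to showing that the algorithm producing $g$ from a diagram can be arranged so that any prescribed component becomes the baseline component. Starting from $(L, C)$, I would first apply a pointed isotopy placing $C$ in the outermost position used in Jones' standard form, and then run the diagram-to-trees procedure while tracking $C$; the resulting $g$ then has its baseline arc, and hence its basepoint, on $C$. Claim (i) follows from the observation that an ambient isotopy can drag the marked point along its component while fixing the link setwise, so any two basepoints on the same component yield pointedly isotopic pointed links.

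The main obstacle is controlling the marked component throughout the reduction to standard form: Jones' standardisation isotopies are designed to simplify the diagram, not to respect a chosen component, so a priori they may reroute $C$ away from the baseline. The crux is therefore a \emph{basepoint-control lemma} asserting that the reduction can be performed relative to the marked component, or equivalently that the components realisable as the baseline arc, together with the basepoint positions realisable along each, exhaust the link. I expect to establish this by introducing explicit modifications of $g$ --- pre- and post-composition with suitable generators of $F_3$ acting near the relevant leaf --- that slide the basepoint along its component and, combined with the freedom in choosing the outermost component, reach every component without changing the link type. Verifying that these moves are genuinely expressible in $F_3$ and are link-type-preserving is the technical heart of the argument.
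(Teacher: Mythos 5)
Your high-level reduction is the right one, and it is the same one the paper uses: since the marking in $\mathsf{Links}_*$ is a marked \emph{component} (not a marked point), your claim (i) is vacuous by definition, and the entire content of the theorem is your claim (ii) --- that for any $(L,C)$ one can find $g\in F_3$ with $\cL(g)=L$ whose central component is $C$. The problem is that your proposal stops exactly where the proof has to begin. The paper establishes (ii) by post-processing an arbitrary representative $f$ supplied by Jones' theorem, via two concrete lemmas: first, the observation that Jones' algorithm forces every component of $\cL(f)$ to pass through at least one leaf of the upper tree $T_+$, so the chosen component $C$ meets some leaf $\alpha$; then \cref{lemma:move leaf}, which inserts vertices into $T_+$ and $T_-$ near $\alpha$ and the leaf $\alpha'$ immediately to its right, changing the resulting diagram only by Reidemeister I and II moves while pushing $C$ one leaf to the right (iterating carries $C$ to the rightmost leaf); and finally \cref{lemma:change central component}, which enlarges the tree pair so that the component through the rightmost leaf becomes the central component, again verified by Reidemeister I and II moves. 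You explicitly defer precisely this content (``the technical heart''), so what you have is a plan rather than a proof, and the plan omits the key enabling fact (every component meets a leaf of $T_+$) that makes the leaf-by-leaf walk possible.

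Moreover, the mechanism you propose for the missing lemma --- pre- and post-composition with suitable generators of $F_3$ --- is the wrong kind of move. Group multiplication in $F_3$ interacts with $\cL$ in no controlled way: $\cL$ is not remotely multiplicative, and composing $f$ with a generator will generically change the link type, so there is no reason such compositions ``slide the basepoint along its component.'' The moves that actually work are not group operations at all; they are insertions of carets and subtrees into the tree pair $(T_+,T_-)$ --- modifications of the diagram itself, engineered so that their effect on the closed-up link diagram is a sequence of Reidemeister I and II moves. Likewise, your primary route of re-running the Alexander-theorem construction while tracking $C$ would require reopening and modifying Jones' proof, a substantial undertaking that you correctly flag as problematic but do not resolve; the paper sidesteps it entirely by treating Jones' theorem as a black box. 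To complete your argument you would need to supply, concretely: (a) the proof that every component of $\cL(f)$ passes through a leaf of $T_+$, and (b) the explicit tree insertions realising the rightward walk and the final re-rooting, each checked against Reidemeister moves.
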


We then consider connected sum of knots, which leads to the definition of the central monoid $(F_3, \diamond)$, whose operation can be rewritten in terms of the group operation of $F_3$. In particular, the operation $\diamond$ is a modified version of the attaching operations defined by Y. Kodama and A. Takano \cite{kodama2023alexanderstheoremstabilizersubgroups}. The monoid structure on $F_3$ turns $\cL_*$ into a surjective monoid homomorphism, with image the monoid of pointed links with operation given by connected sum on the distinguished components. In particular, we obtain the following result.

\begin{theorem}
\label{thm:monoid diagram}
    The following is a commutative diagram of surjective monoid homomorphisms.
    \begin{center}
    \begin{tikzcd}
    (F_3, \diamond) \arrow[rr, "\cL_*", two heads] \arrow[rrdd, "\cK"', two heads] &  & (\mathsf{Links_*}, \#_*) \arrow[dd, "U", two heads] \\
                                     &  &                   \\
                                     &  & (\mathsf{Knots}, \#)                
    \end{tikzcd}    
    \end{center}
\end{theorem}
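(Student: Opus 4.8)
The plan is to establish separately the three assertions bundled into the diagram: that $\cL_*$, $U$ and $\cK$ are each monoid homomorphisms, that each is surjective, and that the triangle commutes. Almost all of the substance sits in the single claim that $\cL_*$ intertwines $\diamond$ and $\#_*$, so I would treat that first. Concretely, I would prove the identity $\cL_*(f \diamond g) = \cL_*(f) \#_* \cL_*(g)$ by unwinding the definition of $\diamond$, rewritten in terms of the group product of $F_3$, at the level of the tree-pair representatives that feed the extended Jones construction. The operation $\diamond$ is designed so that the planar diagram of $f \diamond g$ is obtained from those of $f$ and $g$ by splicing them along the strand carrying the distinguished component; tracing how the strands close up, the distinguished component of $\cL_*(f \diamond g)$ becomes the connected sum of the two distinguished components while all remaining components stay disjoint, which is exactly $\#_*$. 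I would then verify the unit condition, namely that $\cL_*$ sends the unit of $(F_3, \diamond)$ to the unknot regarded as a one-component pointed link, the identity for $\#_*$.

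Granting this, surjectivity of $\cL_*$ as a monoid homomorphism is immediate, since its underlying set map is surjective by \cref{thm:surjective on pointed}. The map $U$ returns the knot type of the distinguished component; because $\#_*$ is by definition connected sum performed on distinguished components, $U$ carries $\#_*$ to the usual connected sum $\#$ and sends the pointed unknot to the unknot, so it is a monoid homomorphism, and it is surjective because any knot $K$ equals $U$ of the pointed link consisting of $K$ alone. Commutativity is then either built into the definition $\cK := U \circ \cL_*$ or checked directly by observing that the Jones construction followed by extraction of the distinguished component reproduces $\cK$; in either case $\cK$ is a composite of surjective monoid homomorphisms and hence is one itself.

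The hard part will be the first step: proving rigorously, by a diagrammatic argument that carefully tracks the distinguished strand through the splicing prescribed by $\diamond$, that the algebraically defined operation on $F_3$ is transported by the extended Jones construction to the geometric connected sum on distinguished components. Once this is in place, every remaining assertion in the diagram follows formally from it together with \cref{thm:surjective on pointed}.
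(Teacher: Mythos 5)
Your proposal is correct, but it assembles the diagram in the opposite order from the paper. The paper's key lemma lives at the level of knots: it first proves (\cref{prop: connected sum of knots}) that $\cK(f\diamond g)=\cK(f)\#\cK(g)$, by observing diagrammatically that $\diamond$ splices the central components compatibly with a fixed orientation convention, and that this yields a surjective monoid homomorphism $(F_3,\diamond)\twoheadrightarrow(\mathsf{Knots},\#)$; it then \emph{extends} this to pointed links (\cref{cor: connected sum of links}), giving $\cL_*(f\diamond g)=\cL_*(f)\#_*\cL_*(g)$, and finally assembles the triangle using $U$. You instead take the pointed-link identity as the single hard lemma and derive everything else formally: since $\cK=U\circ\cL_*$ by definition and $U$ is a surjective homomorphism essentially by the definition of $\#_*$, the knot-level statement and the surjectivity of $\cK$ fall out by composition with \cref{thm:surjective on pointed}. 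Your decomposition is logically tighter, since projecting from pointed links down to knots is purely formal, whereas the paper's passage upward from knots to pointed links needs the (easy but additional) observation that the non-distinguished components are unaffected by the splicing; what the paper's order buys is that the genuinely diagrammatic work happens in the simplest setting, the one-component knot case. One point your sketch glosses over, and which the paper treats explicitly: connected sum is only well defined for \emph{oriented} knots, so your key lemma must also invoke the paper's convention orienting the central component (closing it to the left and orienting clockwise) and check that $\diamond$ respects this orientation --- this is exactly the content of the paper's proof of \cref{prop: connected sum of knots} and of the remark following \cref{cor: connected sum of links}, and your ``hard part'' is incomplete without it.
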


Using the operation $\diamond$ one obtains a standard form for knot representatives up to prime decomposition and, as a consequence, possible moves for a Markov type theorem for $F_3$. A notion of standard form can then be given for a certain family of links by extending this construction via operations that correspond to disjoint union and linking components. Specifically, we obtain representatives for all tree links (see \cref{def:tree links}), which are defined by associating a link to a given labelled finite tree by interpreting vertices as components and labelled edges as linkings.

\begin{theorem}
\label{thm:tree rep}
    For all tree links one can construct a Thompson representative, up to prime components, using $-\diamond_{\alpha}-$, $-\sqcup -$ and the linking moves.
\end{theorem}

\subsection*{Overview}

In \cref{sec:preliminaries} we recall some background on Thompson's group $F$ and describe Jones' algorithm. We then extend Jones' construction in \cref{sec:pointed links}, obtaining a surjective map into $\mathsf{Links}_*$. We introduce the central monoid in \cref{sec: central monoid}, defining the operation $\diamond$ and comparing it to the group operation in $F_3$. We then discuss a generalisation of $\diamond$. In \cref{sec: connected sum} we translate the results of the previous section into low-dimensional topology, obtaining a standard form for knot representatives up to their prime components, and then extend this result in \cref{sec:link representatives} by defining disjoint union and the linking moves, and proving that, via these moves, one obtains representatives for all tree links.

\subsection*{Aknowledgements}
I would like to thank my supervisors Rachael Boyd and Mike Whittaker for their support and extremely useful insights. I am supported by the Engineering and Physical Sciences Research Council [grant number EP/Y035232/1], Centre for Doctoral Training in Algebra, Geometry and Quantum Fields (AGQ).

\renewcommand{\thetheorem}{\thesection.\arabic{theorem}}

\section{Preliminaries on Thompson's group}
\label{sec:preliminaries}
We first introduce the basic notions needed throughout. We recall the definition of Thompson's group $F$, its generalisation $F_3$, and explain Jones' algorithm (see \cite{Jones2017}, \cite{Jones2019}). For a thorough overview on Thompson's group we refer the reader to \cite{CannonFloydParry} and \cite{BelkThesis}. 

\subsection{Thompson's group F}
Thompson's group $F$ is defined as the group of all piecewise-linear homeomorphisms of the unit interval with breakpoints having dyadic rational coordinates and slopes powers of~$2$. The group operation is given by function composition, with the convention $f\cdot g=g\circ f$. In particular, the elements of $F$ correspond to dyadic rearrangements of $[0,1]$, i.e.~order preserving piecewise-linear homeomorphisms between dyadic subdivisions with the same number of cuts. Hence, an element $f\in F$ can be represented by drawing the corresponding domain and range dyadic subdivision.  

\begin{figure}[H]
    \centering
    \begin{tikzpicture}
        %\draw[step=1cm,color=gray] (0,0) grid (10,3);%Uncomment this to get some helpful grid lines
        \node[anchor=south west,inner sep=0] at (0.5,0.3){\includegraphics[width=0.27\textwidth]{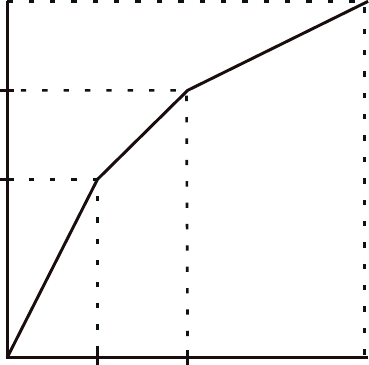}};
        \node at (0,0) {$0$};
        \node at (1.3,0) {$1/4$};
        \node at (2.3,0) {$1/2$};
        \node at (4,0) {$1$};
        \node at (0,2) {$1/2$};
        \node at (0,2.9) {$3/4$};
        \node at (0,3.7) {$1$};
        \node at (2,3.2) {$f$};
        \node at (5,2) {$\leftrightsquigarrow$};
        \node[anchor=south west,inner sep=0] at (6.5,1.5){\includegraphics[width=0.27\textwidth]{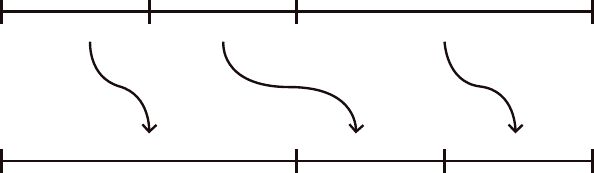}};
        \node at (6,2) {$f\colon$};
        \node at (6.5,2.8) {$0$};
        \node at (7.3,2.8) {$1/4$};
        \node at (8.2,2.8) {$1/2$};
        \node at (9.9,2.8) {$1$};
        \node at (6.5,1.2) {$0$};
        \node at (8.2,1.3) {$1/2$};
        \node at (9.1,1.3) {$3/4$};
        \node at (9.9,1.3) {$1$};
    \end{tikzpicture}
\end{figure}

When using this pictorial representation, we will often omit the arrows, as they are order preserving. Function composition corresponds to stacking intervals, and is read from top to bottom.

\begin{figure}[H]
    \centering
    \begin{tikzpicture}
        %\draw[step=1cm,color=gray] (0,0) grid (10,3);%Uncomment this to get some helpful grid lines
        \node[anchor=south west,inner sep=0] at (1.5,0.3){\includegraphics[width=0.3\textwidth]{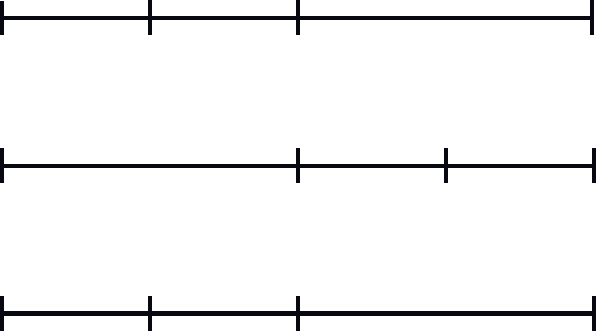}};
        \node at (0,1.5) {$f\cdot g=$};
        \node at (1.1,1.8) {$f$};
        \node at (1.1,0.8) {$g$};
        \node at (6,1.5) {$=id$};
    \end{tikzpicture}
\end{figure}

The group $F$ has presentation 
\[
    F=\langle x_0,x_1,\dots | x_nx_k=x_kx_{n+1} \forall k<n \rangle.
\]
Note that the elements $x_0$ and $x_1$ generate $F$, so we have a finite presentation
\[
    F=\langle x_0, x_1 | x_2x_1=x_1x_3, x_3x_1=x_1x_4 \rangle,
\]
where $x_n=(x_1)^{x_0^{n-1}}$. Pictorial representations of $x_0$ and $x_1$ are shown in \cref{example:product in F}.
Moreover, every element $g\in F$ can be expressed uniquely in a normal form
\[
    g=x_0^{a_0}x_1^{a_1}\dots x_n^{a_n}x_n^{-b_n}\dots x_0^{-b_0},
\]
where $a_0, \dots, a_n,b_0, \dots, b_n \in \N$ (exactly one of $a_n$ and $b_n$ is nonzero) and, if $a_i\neq 0$ and $b_i\neq 0$, then $a_{i+1}\neq 0$ or $b_{i+1}\neq 0$.

Any $g\in F$ can be represented as a pair of binary rooted trees~$T_+$ and~$T_-$, with the same number of leaves. Here the upper tree $T_+$ corresponds to the domain and the lower (flipped) tree $T_-$ to the range, by identifying the leaves of each tree with the intervals of the corresponding dyadic subdivision, as in \cref{fig:x0 trees}.

\begin{figure}[H]
    \centering
    \begin{tikzpicture}
        %\draw[step=1cm,color=gray] (0,0) grid (10,3);%Uncomment this to get some helpful grid lines
        \node[anchor=south west,inner sep=0] at (0,0){\includegraphics[width=0.17\textwidth]{img/intro/ex_x0.pdf}};
        \node at (3.1,1.1) {$\leftrightsquigarrow$};
        \node[anchor=south west,inner sep=0] at (4,0.7){\includegraphics[width=0.27\textwidth]{img/intro/x0_intervals.pdf}};
        \node at (4,2) {$0$};
        \node at (4.8,2) {$1/4$};
        \node at (5.8,2) {$1/2$};
        \node at (7.4,2) {$1$};
        \node at (4,0.4) {$0$};
        \node at (5.8,0.5) {$1/2$};
        \node at (6.6,0.5) {$3/4$};
        \node at (7.4,0.5) {$1$};
        \node at (8.2,1.1) {$\leftrightsquigarrow$};
        \node[anchor=south west,inner sep=0] at (9,0){\includegraphics[width=0.17\textwidth]{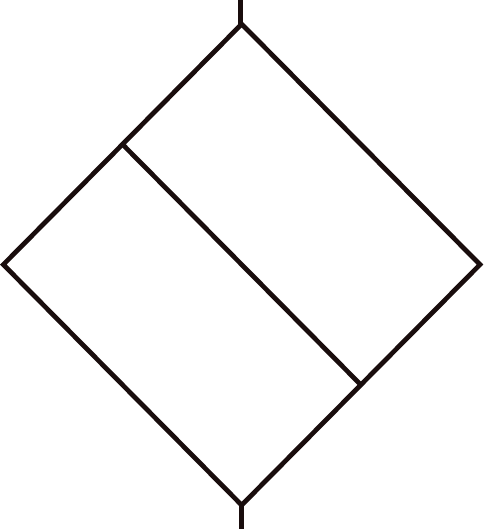}};
    \end{tikzpicture}
    \caption{The element $x_0\in F$, its dyadic subdivision, and the corresponding pair of trees.}
    \label{fig:x0 trees}
\end{figure}

We say that two pairs of binary trees are equivalent if and only if they differ up to adding or removing carets, as in \cref{fig:caret reduction}.

\begin{figure}[H]
    \centering
    \begin{tikzpicture}
        %\draw[step=1cm,color=gray] (0,0) grid (10,3);%Uncomment this to get some helpful grid lines
        \node[anchor=south west,inner sep=0] at (0,0){\includegraphics[width=0.23\textwidth]{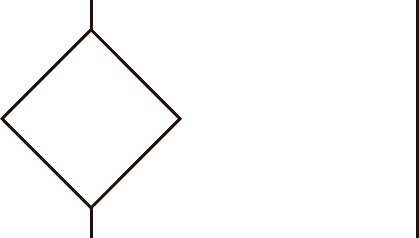}};
        \node at (2,0.8) {$\leftrightarrow$};
    \end{tikzpicture}
    \caption{Equivalence relation on binary trees. The left hand pair of trees is known as a caret.}
    \label{fig:caret reduction}
\end{figure}

A pair $(T_+, T_-)$ is said to be reduced if there are no carets. Each equivalence class has a unique reduced representative, and the correspondence between elements of $F$ and pairs of binary trees is unique up to considering reduced pairs of trees. 

The Guba--Sapir--Belk (GSB) algorithm (\cite{GubaSapir97}, \cite{BelkThesis}) provides rules to represent the group product via moves on tree pairs. Consider $f,g\in F$, and their corresponding pairs of reduced binary trees~$(T_+,T_-)$ and $(S_+,S_-)$, then the product $f\cdot g$ is represented by the pair of binary trees obtained by positioning $(T_+,T_-)$ above $(S_+,S_-)$, joining the root of $T_-$ with that of $S_+$, and repeatedly applying caret reduction and the move

\begin{figure}[H]
    \centering
    \begin{tikzpicture}
        %\draw[step=1cm,color=gray] (0,0) grid (10,3);%Uncomment this to get some helpful grid lines
        \node[anchor=south west,inner sep=0] at (0,0){\includegraphics[width=0.23\textwidth]{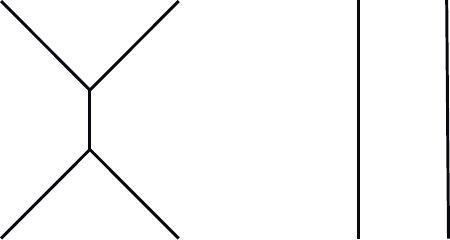}};
        \node at (1.7,0.8) {$\to$};
        \node at (4,0.8) {.};
    \end{tikzpicture}
\end{figure}

\begin{example}
\label{example:product in F}
    Consider $x_0,x_1\in F$, with reduced pairs of trees  
    \begin{figure}[H]
        \centering
        \begin{tikzpicture}
            %\draw[step=1cm,color=gray] (0,0) grid (10,3);%Uncomment this to get some helpful grid lines
            \node at (0.2,1) {$x_0=$};
            \node[anchor=south west,inner sep=0] at (1,0){\includegraphics[width=0.15\textwidth]{img/intro/tree_x0.pdf}};
            \node at (5.2,1) {$x_1=$};
            \node[anchor=south west,inner sep=0] at (6,0){\includegraphics[width=0.15\textwidth]{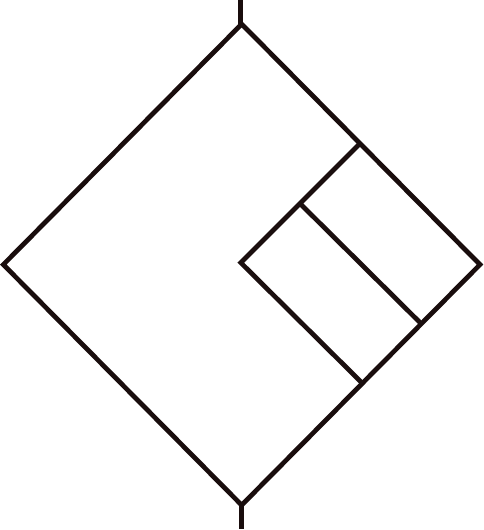}};
        \end{tikzpicture}
    \end{figure}
    Applying the GSB algorithm we obtain $x_0\cdot x_1$:
    \begin{figure}[H]
        \centering
        \begin{tikzpicture}
            %\draw[step=1cm,color=gray] (0,0) grid (10,3);%Uncomment this to get some helpful grid lines
            \node[anchor=south west,inner sep=0] at (0,0){\includegraphics[width=0.15\textwidth]{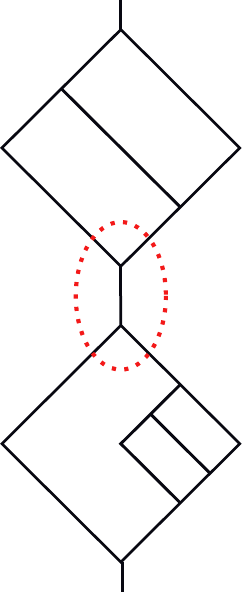}};
            \node at (2.5,2) {$\rightsquigarrow$};
            \node[anchor=south west,inner sep=0] at (3,0){\includegraphics[width=0.15\textwidth]{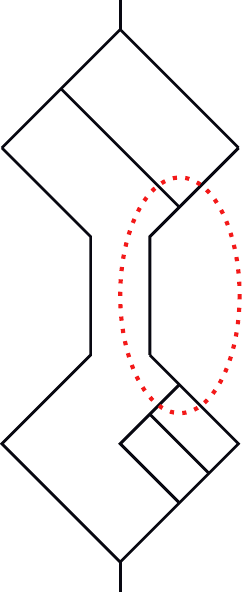}};
            \node at (5.5,2) {$\rightsquigarrow$};
            \node[anchor=south west,inner sep=0] at (6,0.3){\includegraphics[width=0.15\textwidth]{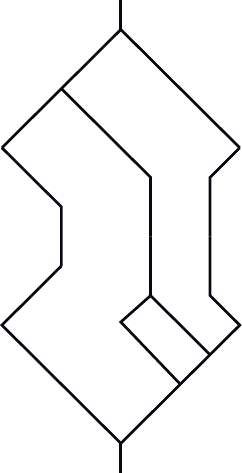}};
            \node at (8.5,2) {$\rightsquigarrow$};
            \node[anchor=south west,inner sep=0] at (9,0.8){\includegraphics[width=0.15\textwidth]{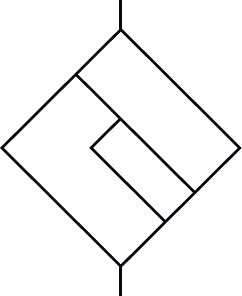}};
            \node at (11.5,2) {.};
        \end{tikzpicture}
    \end{figure}
\end{example}

\subsection{Brown--Thompson groups}
Thompson's group $F$ generalises to Brown--Thompson groups $F_k$, $k\geq 2$, corresponding to $k$-ary rearrangements of the interval. For each $F_k$, we have the following presentation:
\[
    F_k=\langle y_0, y_1, \dots | y_ny_m=y_my_{n+k-1} \forall m<n \rangle.
\]
Note that $F_2=F$, and the elements $y_0, \dots, y_{k-1}$ generate $F_k$ for all $k\geq 2$, giving the following finite presentation (see \cite[Section 2.1]{KodamaTakano24}) 
\begin{equation*}
   F_k= \left\langle
    y_0, \dotsc, y_{n-1}
    \middle\vert
    \begin{alignedat}{3}
        y_k^{y_0} &= y_k^{y_i} &&\quad 1 \le i < k \le n-1,\\
        y_k^{y_0^2} &= y_k^{y_0y_i} &&\quad 1 \le i, k \le n-1 \text{ and } k - 1 \le i,\\
        y_1^{y_0^3} &= y_1^{y_0^2y_{n-1}}
    \end{alignedat}
    \right\rangle .
\end{equation*}

Similarly to $F$, the group $F_k$ can be represented by pairs of $k$-ary trees. In particular, we will be interested in the group $F_3$, whose generators are represented in \cref{fig:generators of F3}.

\begin{figure}[H]
    \centering
    \begin{tikzpicture}[scale=0.7]
        %\draw[step=1cm,color=gray] (0,0) grid (10,6);%Uncomment this to get some helpful grid lines
        \node at (0,8.5) {$y_0\colon$};
        \node[anchor=south west,inner sep=0] at (0.8,8){\includegraphics[width=0.2\textwidth]{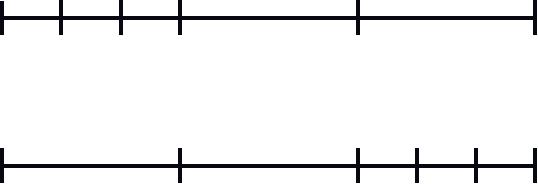}};
        \node at (5.5,8.5) {$\leftrightsquigarrow$};
        \node[anchor=south west,inner sep=0] at (6.5,7){\includegraphics[width=0.15\textwidth]{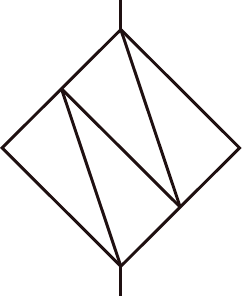}};
        \node at (0,5) {$y_1\colon$};
        \node[anchor=south west,inner sep=0] at (0.8,4.5){\includegraphics[width=0.2\textwidth]{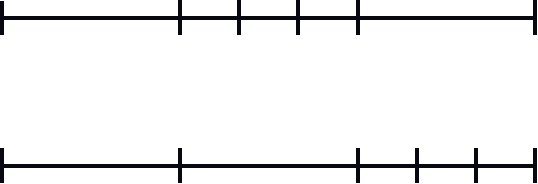}};
        \node at (5.5,5) {$\leftrightsquigarrow$};
        \node[anchor=south west,inner sep=0] at (6.5,3.5){\includegraphics[width=0.15\textwidth]{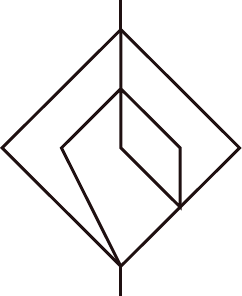}};
        \node at (0,1.5) {$y_2\colon$};
        \node[anchor=south west,inner sep=0] at (0.8,1){\includegraphics[width=0.2\textwidth]{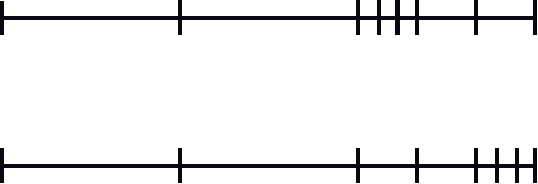}};
        \node at (5.5,1.5) {$\leftrightsquigarrow$};
        \node[anchor=south west,inner sep=0] at (6.5,0){\includegraphics[width=0.15\textwidth]{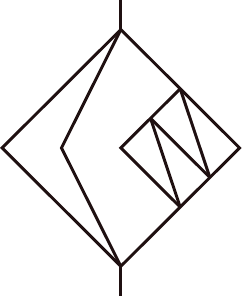}};
    \end{tikzpicture}
    \caption{The generators $y_0, y_1$ and $y_2$ of $F_3$ and the corresponding pairs of ternary trees.}
    \label{fig:generators of F3}
\end{figure}

In the case of $F_3$, caret reduction/inclusion is given by
\begin{figure}[H]
    \centering
    \begin{tikzpicture}
        %\draw[step=1cm,color=gray] (0,0) grid (10,3);%Uncomment this to get some helpful grid lines
        \node[anchor=south west,inner sep=0] at (0,0){\includegraphics[width=0.23\textwidth]{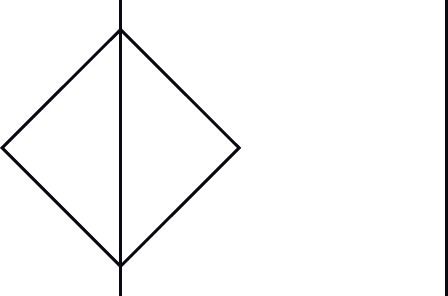}};
        \node at (2.2,1) {$\leftrightarrow$};
        \node at (4,1) {.};
    \end{tikzpicture}
\end{figure}
Analogously to the GSB algorithm, the group operation on $F_3$ is obtained by taking the corresponding pairs of ternary trees and repeatedly applying caret reduction and the move 

\begin{figure}[H]
    \centering
    \begin{tikzpicture}
        %\draw[step=1cm,color=gray] (0,0) grid (10,3);%Uncomment this to get some helpful grid lines
        \node[anchor=south west,inner sep=0] at (0,0){\includegraphics[width=0.23\textwidth]{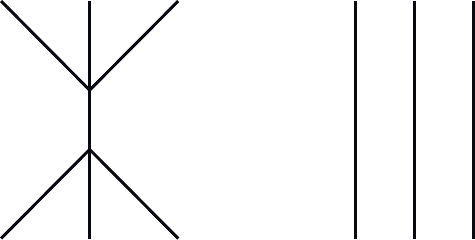}};
        \node at (1.6,0.8) {$\to$};
        \node at (4,0.8) {.};
    \end{tikzpicture}
\end{figure}

\subsection{Jones' algorithm}
In 2017 Vaughan Jones introduced a way to obtain knots and links starting from Thompson's group $F$ \cite{Jones2017}. Here we describe an equivalent construction presented in \cite{Jones2019}, that can be extended from $F$ to $F_3$. Let $\mathsf{Knots}$ and $\mathsf{Links}$ denote respectively the sets of knots and links up to isotopy. Take $g\in F$ and consider the image under the inclusion $F\hookrightarrow F_3$, obtained by sending $x_0$ to $y_0$ and~$x_1$ to $y_2$. At the level of trees, this corresponds to adding an edge to each vertex, as in \cref{fig:image of x0 in F3}.

\begin{figure}[H]
    \centering
        \begin{tikzpicture}
        %\draw[step=1cm,color=gray] (0,0) grid (12,3);%Uncomment this to get some helpful grid lines
        \node[anchor=south west,inner sep=0] at (0,0){\includegraphics[width=0.18\textwidth]{img/intro/tree_x0.pdf}};
        \node at (4.2,1.3) {$\longrightarrow$};
        \node[anchor=south west,inner sep=0] at (6,0){\includegraphics[width=0.19\textwidth]{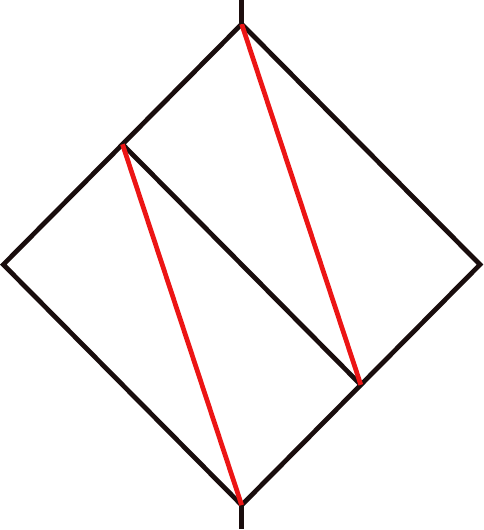}};
        \end{tikzpicture}
        \caption{The image of $x_0$ in $F_3$.}
        \label{fig:image of x0 in F3}
\end{figure}

To obtain a link from the image in $F_3$, we first replace each $4$-valent vertex with a crossing via the following rules.

\begin{figure}[H]
    \centering
        \begin{tikzpicture}
        %\draw[step=1cm,color=gray] (0,0) grid (12,3);%Uncomment this to get some helpful grid lines
        \node[anchor=south west,inner sep=0] at (0,0){\includegraphics[width=0.1\textwidth]{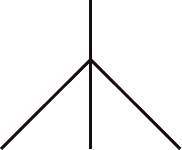}};
        \node at (2.7,0.3) {$\longrightarrow$};
        \node[anchor=south west,inner sep=0] at (4,0){\includegraphics[width=0.1\textwidth]{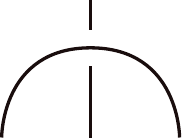}};
        \end{tikzpicture}
\end{figure}

\begin{figure}[H]
    \centering
        \begin{tikzpicture}
        %\draw[step=1cm,color=gray] (0,0) grid (12,3);%Uncomment this to get some helpful grid lines
        \node[anchor=south west,inner sep=0] at (0,0){\includegraphics[width=0.1\textwidth]{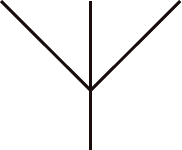}};
        \node at (2.7,0.5) {$\longrightarrow$};
        \node[anchor=south west,inner sep=0] at (4,0){\includegraphics[width=0.1\textwidth]{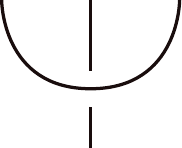}};
        \end{tikzpicture}
\end{figure}

Finally, we close the link by joining the two roots.

\begin{figure}[H]
    \centering
        \begin{tikzpicture}
        %\draw[step=1cm,color=gray] (0,0) grid (12,3);%Uncomment this to get some helpful grid lines
        \node[anchor=south west,inner sep=0] at (0,0){\includegraphics[width=0.25\textwidth]{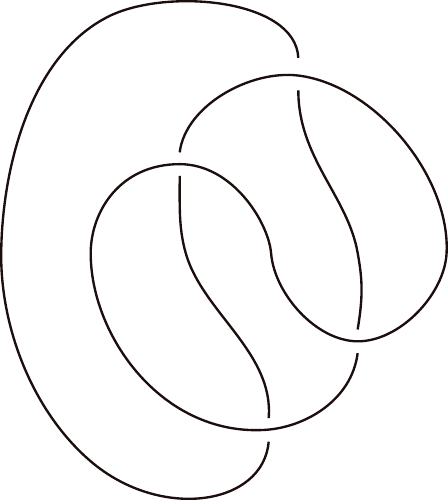}};
        \end{tikzpicture}
        \caption{The link obtained from $x_0\in F$ via Jones' algorithm. Notice this is the unknot.}
\end{figure}

Jones' algorithm gives a map $\cL\colon F \to \mathsf{Links}$, sending each element of $F$ to the isotopy class of the corresponding link. In analogy to the construction of links given by closing elements of the braid group, Jones proved an Alexander type theorem: the map $\cL$ is surjective.

\begin{theorem}\cite[Theorem 3.1]{Jones2019}
\label{thm:Alexander type thm}
    For any link $L$ there exists an element $g\in F$ such that $\cL(g)= L$.
\end{theorem}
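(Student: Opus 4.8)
The plan is to realise an arbitrary link as the output of $\cL$ by passing through the classical correspondence between link diagrams and planar graphs. First I would make explicit the combinatorial nature of the construction: given $g\in F$ with reduced tree pair $(T_+,T_-)$, its image in $F_3$, once each $4$-valent vertex is replaced by a crossing and the two roots are joined, is a link diagram drawn on the sphere whose underlying $4$-valent graph is completely determined by how the leaves of $T_+$ and $T_-$ are glued. The key observation is that such a diagram is always the \emph{medial diagram} of a planar graph, with the over/under information at each crossing recorded by whether the corresponding vertex lies in the upper tree $T_+$ or the lower tree $T_-$.

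Conversely, I would use the fact that every link admits a diagram of this medial type. Given a link $L$, choose a connected diagram $D$, checkerboard-colour its complementary regions, and form the associated Tait graph $\Gamma$, with one vertex per (say) black region and one edge per crossing, each edge signed according to the crossing type relative to the colouring. Then $D$ is recovered as the medial diagram of the signed planar graph $\Gamma$. Since every link diagram is checkerboard-colourable, this reduces the theorem to a purely combinatorial statement: every finite connected signed planar graph arises from some tree pair under the construction.

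To establish this, I would encode $\Gamma$ directly into a tree pair. Fixing a planar embedding of $\Gamma$ together with a spanning structure, one reads off a sequence of carets that records the cyclic order of edges around each vertex and reconstructs the faces of $\Gamma$; positively signed edges are produced by vertices placed in $T_+$ and negatively signed edges by vertices placed in $T_-$, so that the up/down crossing rules of the construction reproduce the signs of $\Gamma$. Carrying this out so that the glued leaves of $(T_+,T_-)$ trace out exactly the medial curves of $\Gamma$ gives an element $g\in F$ with $\cL(g)=L$.

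The main obstacle is precisely this last bookkeeping: the construction fixes the crossing type at each vertex by its position in the tree pair, so one must arrange the embedding and the partition of crossings between $T_+$ and $T_-$ so that every sign in $\Gamma$ is matched simultaneously, while the two trees still have the same number of leaves and glue up to the correct medial curve. I expect to need some freedom here---either allowing non-reduced tree pairs (adding carets to balance the two trees) or first modifying $D$ by Reidemeister moves to a diagram whose Tait graph has a compatible sign distribution---and verifying that this freedom always suffices is the crux of the argument.
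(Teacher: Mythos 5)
Note first that the paper you are comparing against does not prove this theorem at all: it is imported by citation from Jones, so the only proof to measure your proposal against is Jones' original one. Your overall strategy --- view the diagrams produced by the construction as medial (Tait) diagrams of signed plane graphs, with the sign of a crossing recorded by whether the corresponding vertex lies in $T_+$ or in $T_-$, then run the correspondence backwards from a checkerboard-coloured diagram of $L$ --- is precisely Jones' strategy, not a different route.

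The proposal nevertheless has a genuine gap, and it sits exactly where you stop. Your stated reduction, ``every finite connected signed planar graph arises from some tree pair under the construction,'' is false as a statement: the graphs coming from tree pairs are extremely constrained, since the positive edges are in bijection with the carets of $T_+$ and the negative edges with the carets of $T_-$, so each sign class must form a plane \emph{tree} spanning vertices arranged along a line, with nesting conditions and equal leaf counts on both sides. A signed plane graph whose positive edges contain a cycle is therefore never of this form, yet such graphs arise as Tait graphs of perfectly ordinary diagrams. The correct target statement is weaker and is the entire content of the theorem: every connected signed plane graph can be transformed, by moves that do not change the Tait link (the graph-level translations of Reidemeister I and II --- adding or deleting pendant edges and loops, and replacing an edge by two oppositely signed edges in series through a new vertex or in parallel), into one of the constrained ``Thompson'' form. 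Jones' proof consists of exhibiting these moves and an induction showing they always suffice; your proposal defers this (``the crux'') without giving a procedure, and the one concrete mechanism you do offer --- adding carets to non-reduced tree pairs --- cannot by itself repair a sign, because a caret added to $T_+$ or $T_-$ still produces crossings whose sign is dictated by the tree it was added to. Until that moves-and-induction argument is supplied, the proposal restates the theorem rather than proving it.
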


Forgoing the first step in the construction, we can extend $\cL$ to a map on $F_3$ that, with an abuse of notation, will also be denoted by $\cL$.
\begin{definition}
    Define $\cL\colon F_3 \to \mathsf{Links}$ to be the map associating to each element $f\in F_3$ the isotopy class of the link obtained via Jones' algorithm.  
\end{definition}

Note that, since we are taking an extension of the map defined on $F$, \cref{thm:Alexander type thm} implies that $\cL\colon F_3 \to \mathsf{Links}$ is surjective.

\section{Extension to pointed link}
\label{sec:pointed links}
Denote by $\mathsf{Links}_*$ the set of pointed links up to pointed isotopy, i.e.~the set of links with a marked component considered up to link isotopy that preserves the marking. In this section we extend Jones' result to obtain a surjective map into $\mathsf{Links}_*$.\\

Each element of $\mathsf{Links}_*$ corresponds to a pair $(L,C)$, with $L$ the isotopy class of the link and $C$ its marked component. Note that, for any~$f\in F_3$, Jones' construction gives us a preferred component in $\cL(f)$, corresponding to the component containing the closure of the roots of the trees. We call it the \textit{central component}. We can therefore extend $\cL$ to the following map.

\begin{definition}
    Define $\cL_*\colon F_3 \to \mathsf{Links}_*$ to be the map associating to each~$f\in F_3$ the class of the link $\cL(f)$ with marked component the central component. We call the image of an element $f\in F_3$ under $\cL_*$ a \textit{Thompson link}.
\end{definition}

Isotopic links may differ once one takes pointed isotopy, as the class depends on the marked component. For example, the links in \cref{fig:split links} correspond to the same class in $\mathsf{Links}$ but represent different classes in~$\mathsf{Links}_*$.

\begin{figure}[H]
    \centering
    \begin{subfigure}[b]{0.45\textwidth}
        \centering
        \includegraphics[width=0.7\textwidth]{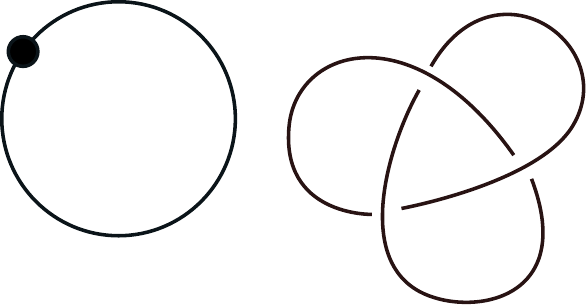}
    \end{subfigure}
    \;
    \begin{subfigure}[b]{0.45\textwidth}
        \centering
        \includegraphics[width=0.7\textwidth]{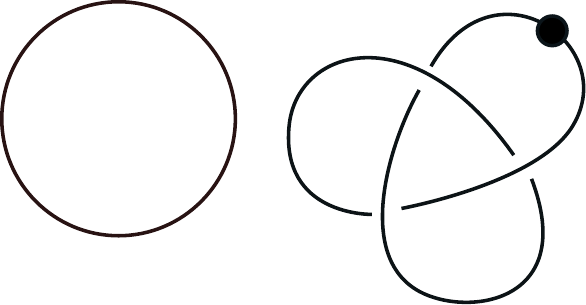}
    \end{subfigure}
    \caption{A split link with marked component the unknot (on the left) or the trefoil (on the right).}
    \label{fig:split links}
\end{figure}

We extend Jones' result to obtain the following theorem.
\begin{restate}{Theorem}{thm:surjective on pointed}
    The map $\cL_*\colon F_3 \to \mathsf{Links}_*$ is surjective. 
\end{restate}

To prove \cref{thm:surjective on pointed} we show that, given a representative $f\in F_3$ of a link $L$, we can modify $f$, without changing the isotopy class of the link, so that any component can be made to be the central component. To do so, we use the following two lemmas.

\begin{lemma}
\label{lemma:change central component}
    Consider $f\in F_3$, with reduced pair of trees $(T_+, T_-)$. Let $\lambda$ be the rightmost leaf of $T_+$, and denote by $C_{\lambda}$ the component of~$\cL(f)$ containing the image of $\lambda$. Then, there exists $f'\in F_3$ such that~$\cL(f)=\cL(f')$ and $C_{\lambda}$ is the central component of $\cL(f')$.
\end{lemma}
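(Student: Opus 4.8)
The plan is to exhibit an explicit modification of the reduced tree pair $(T_+, T_-)$, producing a new element $f' \in F_3$, that leaves the underlying link unchanged but reroutes the root closure so that the central strand runs through the image of $\lambda$. First I would fix the planar picture of $\cL(f)$: draw $T_+$ with root at the top centre and leaves along a horizontal line, draw $T_-$ as the mirror image below with matched leaves, replace each $4$-valent vertex by its crossing, and join the two roots by a closure arc running around (say) the right-hand side. The central component is then the unique strand running from the top root to the bottom root; tracing it downward through the crossings of $T_+$ it reaches one distinguished leaf, and the content of the lemma is precisely that we may arrange this leaf to be $\lambda$. The key geometric observation is that the strand emanating from $\lambda$ is the outermost strand on the right, so it is separated from the closure arc only by the unbounded region: the central strand and $C_\lambda$ are \emph{adjacent across the closure}, which is what makes a single local move suffice.

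Next I would describe the move itself. Since caret reduction preserves the group element (and hence both the link and its central component), the modification must genuinely change $f$; the idea is to introduce a pair of cancelling crossings — a caret in $T_+$ at the rightmost leaf together with a compensating caret in $T_-$ — chosen so that (i) the two new crossings are removed by a Reidemeister~II move, whence $\cL(f') = \cL(f)$, and (ii) the reconnection created at the new crossings diverts the root-to-root strand along the old rightmost strand. Equivalently, one may phrase the same construction as an ambient isotopy of the diagram on $S^2$ that slides the root closure past the rightmost strand and then re-reads the resulting picture as a new ternary tree pair. I would then check directly, following the over/under and up/down crossing conventions through the two new vertices, that the link type is unchanged and that after the move the strand joining the two roots passes through the image of $\lambda$, so that $C_\lambda$ is the new central component.

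The main obstacle is arranging a single modification that achieves (i) and (ii) simultaneously while ensuring the output is again a legitimate reduced tree pair in $F_3$, rather than merely a $4$-valent closed diagram. Concretely, one must track the crossing produced at the new root vertex and confirm, according to whether the relevant vertices are ``up'' or ``down'', that the rerouting meets $\lambda$ on the nose and that no unintended component is merged or created when the leaf count is balanced between $T_+$ and $T_-$. Once the local move is pinned down and its effect on the strand connectivity is computed, the statement follows by applying it to the rightmost leaf of the reduced representative $(T_+, T_-)$.
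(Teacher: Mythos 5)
Your strategy coincides with the paper's: produce $f'$ by an explicit modification of the tree pair whose extra crossings cancel by Reidemeister moves while rerouting the root-to-root strand through the image of $\lambda$. The gap is that you never exhibit the modification, and in this lemma the explicit construction \emph{is} the proof. The one concrete thing you do propose is problematic: ``a caret in $T_+$ at the rightmost leaf together with a compensating caret in $T_-$'' is, if the carets sit at matching leaves, precisely caret inclusion, which represents the \emph{same} element of $F_3$ and hence changes neither the link nor its central component --- as you yourself observe. So the compensating caret must be placed elsewhere, but you never say where, and that placement together with the check of strand connectivity through the new crossings is exactly the missing content. Your alternative phrasing --- isotope the diagram on $S^2$ so the closure slides past the rightmost strand, then ``re-read the resulting picture as a new ternary tree pair'' --- begs the same question: an arbitrary isotoped diagram is not in Thompson form, and showing that the rerouted diagram is realised by an honest pair of ternary trees is the nontrivial step you defer.

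For comparison, the paper's proof does this concretely: it grafts new vertices around the whole pair $(T_+,T_-)$ to form a larger pair $(T_+',T_-')$, arranged so that the old rightmost leaf $\lambda$ becomes an \emph{internal vertex of the new lower tree} $T_-'$; the new root closure then runs along the old strand through $\lambda$, and the finitely many extra crossings are removed by Reidemeister I \emph{and} II moves (not a single R2), giving $\cL(f')=\cL(f)$ with central component $C_{\lambda}$. Your observation that the strand through $\lambda$ is outermost, hence adjacent to the closure arc across the unbounded region, is the right intuition for why such a construction exists; but as written, your argument stops exactly where the work begins, flagging the construction as ``the main obstacle'' without resolving it.
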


\begin{proof}
    Given $f\in F_3$, let $(T_+', T_-')$ be the pair of ternary trees obtained from $(T_+,T_-)$ as follows:
    
    \begin{figure}[H]
    \centering
    \begin{tikzpicture}[scale=1]
    %\draw[step=1cm,color=gray] (0,0) grid (15,6);%Uncomment this to get some helpful grid lines
    \node[anchor=south west,inner sep=0] at (0.5,1.5){\includegraphics[width=0.3\textwidth]{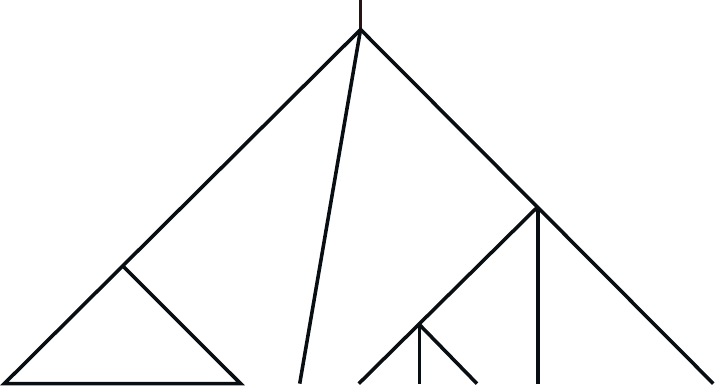}};
    \node at (0,2) {$T_+'\colon=$};
    \node at (1.2,1.7) {$T_+$};
    \node at (5,2) {and};
    \node[anchor=south west,inner sep=0] at (7,0.5){\includegraphics[width=0.3\textwidth]{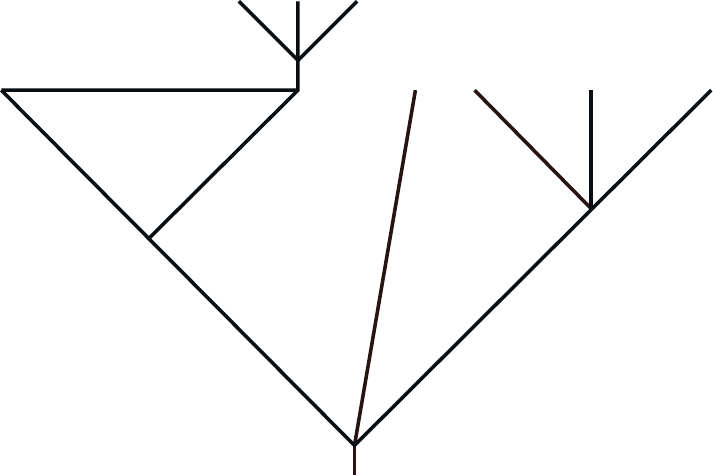}};
    \node at (6.5,2) {$T_-'\colon=$};
    \node at (7.8,2.2) {$T_-$};
    \node at (11,2) {.};
    \end{tikzpicture}
    \end{figure}
    
    Define $f'\in F_3$ as the element corresponding to the reduced pair of ternary trees $(T_+', T_-')$, i.e.~the element  
    \begin{figure}[H]
    \centering
    \begin{tikzpicture}[scale=0.7]
    %\draw[step=1cm,color=gray] (0,0) grid (15,6);%Uncomment this to get some helpful grid lines
    \node[anchor=south west,inner sep=0] at (1,0){\includegraphics[width=0.3\textwidth]{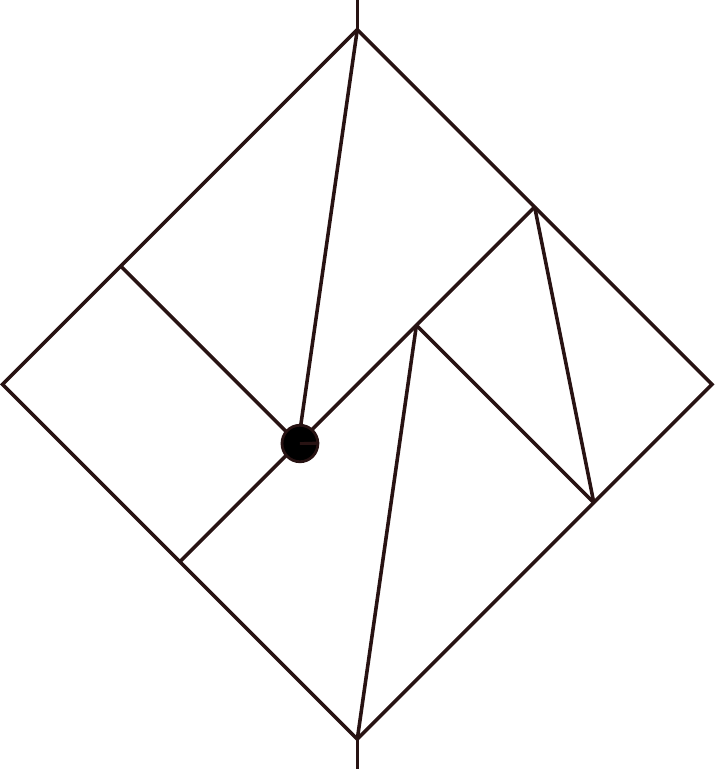}};
    \node at (0,2.9) {$f'=$};
    \node at (2,2.7) {$f$}; 
    \node at (3.3,1.8) {$\lambda$};
    \node at (6.7,2.9) {.};
    \end{tikzpicture}
    \end{figure}
    Note that the pair of trees of $f$ appears in $f'$, but the leaf $\lambda$ is now a vertex in $T_-'$ and not a leaf. Applying $\cL$ to $f'$, we obtain:
    \begin{figure}[H]
    \centering
    \begin{tikzpicture}[scale=0.7]
    %\draw[step=1cm,color=gray] (0,0) grid (15,6);%Uncomment this to get some helpful grid lines
    \node[anchor=south west,inner sep=0] at (0.7,0){\includegraphics[width=0.25\textwidth]{img/surjectivity/surj1a.pdf}};
    \node at (0,2.4) {$f'=$};
    \node at (1.7,2.2) {$f$}; 
    \node at (2.9,1.7) {$\lambda$};
    \node at (6.7,2.3) {$\longrightarrow$};
    \node at (6.7,2.7) {$\cL$};
    \node[anchor=south west,inner sep=0] at (8,0){\includegraphics[width=0.27\textwidth]{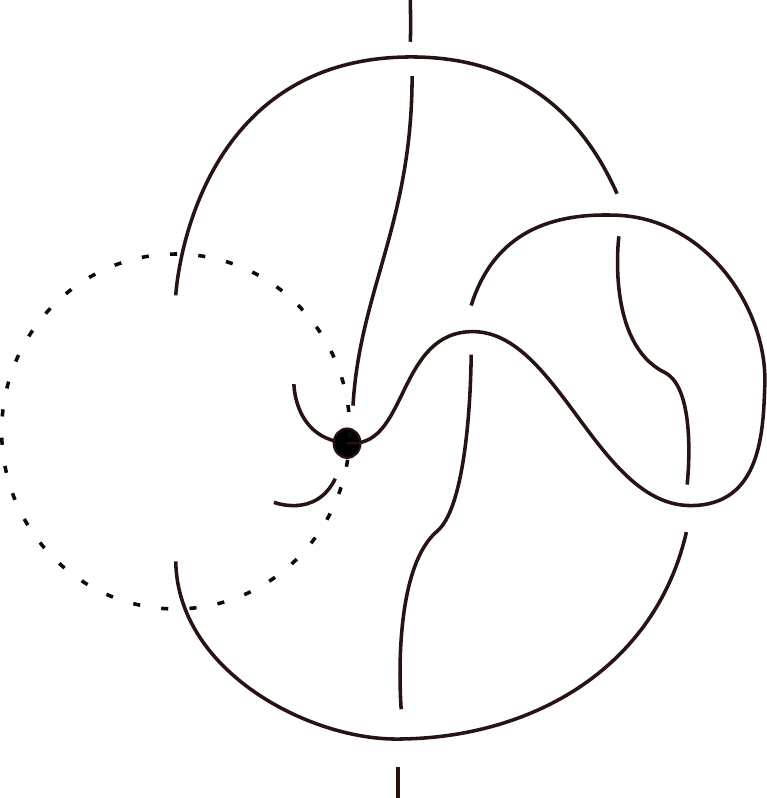}};
    \node at (9,2.5) {$\cL(f)$};
    \node at (13,3.7) {$C_{\lambda}$};
    \node at (14.1,2.4) {$=\cL(f')$.};
    \end{tikzpicture}
    \end{figure}

    Note that, via a sequence of Reidemeister I and II moves, the image of $f'$ is isotopic to that of $f$, i.e.~$\cL(f')=\cL(f)$, and $\cL(f')$ has central component $C_{\lambda}$. Hence, the element $f'$ satisfies the lemma.
\end{proof}

\begin{lemma}
\label{lemma:move leaf}
    Take $f\in F_3$ with reduced pair of trees $(T_+, T_-)$. Let $\alpha$ be any leaf in $T_+$ and denote by $C_{\alpha}$ the component of $\cL(f)$ containing the image of $\alpha$. Then, there exists $f'\in F_3$ with reduced pair of trees~$(T'_+, T'_-)$ such that $\cL(f')=\cL(f)$ and the component $C_{\alpha}$ passes through the rightmost leaf of $T'_+$.
\end{lemma}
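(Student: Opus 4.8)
The plan is to reduce to \cref{lemma:change central component}: once some leaf lying on $C_\alpha$ has been made the rightmost leaf of $T'_+$, applying \cref{lemma:change central component} turns $C_\alpha$ into the central component, so that the two lemmas together yield \cref{thm:surjective on pointed}. I would work throughout with the planar diagram produced by Jones' algorithm and, exactly as in the proof of \cref{lemma:change central component}, alter the tree pair only by inserting carets, so that the effect on the link can be tracked by Reidemeister~I and~II moves and hence kept under control.

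First I would order the leaves of $T_+$ from left to right, locate $\alpha$, and trace $C_\alpha$ through the diagram, recording every leaf it passes through; if $C_\alpha$ already meets the rightmost leaf there is nothing to prove. Otherwise the goal is to transport an arc of $C_\alpha$ to the right-hand boundary of the diagram and reattach it there as a new rightmost leaf, in the spirit of the cyclic (Markov) move for braid closures. The move I would use, at least when an arc of $C_\alpha$ is accessible to the outer region of the diagram — the face containing the closure arc joining the two roots — is a slide of such an arc through that outer region around to the right edge. Performed in the outer face the arc meets no other strand, so the crossings created when it is pulled into position are removable by Reidemeister~I and~II and the link type is preserved, while by construction the new rightmost leaf lies on $C_\alpha$.

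The substance of the proof is to realise this slide as an honest modification of the tree pair: I would encode the transported arc as a string of carets appended to $T_+$, together with the matching carets in $T_-$, so that after caret reduction $(T'_+, T'_-)$ is a reduced pair representing a genuine element $f' \in F_3$ with $\cL(f') = \cL(f)$. The main obstacle is that the arc of $C_\alpha$ one wishes to move may be buried in the interior of the diagram rather than adjacent to the outer face; reaching the outer region by a naive planar finger would force crossings with intervening strands, which are not removable and would change the link type. Overcoming this is the technical heart of the argument: one must either route the arc using the global structure of the closure, so that the slide is a genuine ambient isotopy of $S^3$, or argue inductively, moving $C_\alpha$ one position to the right at a time and checking at each step that the inserted carets produce only Reidemeister-removable crossings and that the resulting pair reduces correctly. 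I expect the careful verification of exactly this point — that the required arc can always be freed and slid to the boundary without introducing essential crossings, for an arbitrary leaf in an arbitrary ternary tree — to be where the real work lies.
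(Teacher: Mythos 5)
There is a genuine gap: you correctly identify the two candidate strategies, but you carry out neither, and the one you develop in most detail --- sliding an arc of $C_\alpha$ through the outer face of the diagram around to the right-hand edge, in the spirit of the braid-closure cyclic move --- is precisely the one that fails, for the reason you yourself concede: the strand through an arbitrary leaf need not be accessible to the outer region, and forcing it there creates crossings with intervening strands that are not removable. The paper's proof never touches the outer face. Its move is purely local: if $\alpha$ is not the rightmost leaf, let $\alpha'$ be the leaf immediately to its right, and insert a specific small pattern of vertices into both $T_+$ and $T_-$ near $\alpha$ and $\alpha'$. Under Jones' crossing rules these inserted vertices create a finger of $C_\alpha$ that passes the single intervening strand (the one through $\alpha'$) in such a way that the new diagram differs from the old one only by Reidemeister I and II moves; hence the link type is unchanged while $C_\alpha$ now passes through a leaf lying to the right of $\alpha'$. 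Iterating this one-step move until $C_\alpha$ reaches the rightmost leaf proves the lemma, and no global accessibility or ambient-isotopy argument is ever needed.

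Your second alternative (``argue inductively, moving $C_\alpha$ one position to the right at a time and checking at each step that the inserted carets produce only Reidemeister-removable crossings'') is exactly the paper's argument in outline, but you explicitly defer its substance --- exhibiting the caret insertion and checking that its image under $\cL$ is Reidemeister-equivalent to the original diagram --- calling it where the real work lies. That deferred verification is the entire content of the paper's proof; without it the proposal is a plan rather than a proof. Note also that the check is not, as you fear, a delicate statement about ``an arbitrary leaf in an arbitrary ternary tree'': since one only ever pushes past one adjacent leaf at a time, it amounts to a single local picture involving two adjacent leaves, verified once and then iterated.
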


\begin{proof}
    If $\alpha$ is the rightmost leaf of $T_+$ we conclude by taking $f'=f$. Suppose $\alpha$ is not the rightmost leaf of $T_+$, then there exists a leaf $\alpha'$ directly to the right of $\alpha$. Modify the trees $T_+$ and $T_-$ by adding vertices near the leaves $\alpha$ and $\alpha'$ as follows: 
    
    \begin{figure}[H]
    \centering
    \begin{tikzpicture}[scale=0.7]
    %\draw[step=1cm,color=gray] (0,0) grid (10,6);%Uncomment this to get some helpful grid lines
    \node[anchor=south west,inner sep=0] at (0,0){\includegraphics[width=0.14\textwidth]{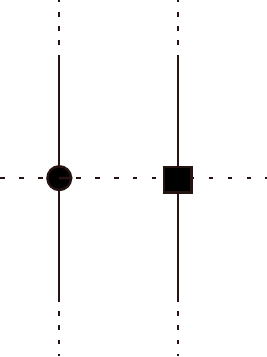}};
    \node at (0.2,2) {$\alpha$};
    \node at (2.2,2) {$\alpha'$};
    \node at (3.2,1.6) {$\rightsquigarrow$};
    \node[anchor=south west,inner sep=0] at (4,0){\includegraphics[width=0.25\textwidth]{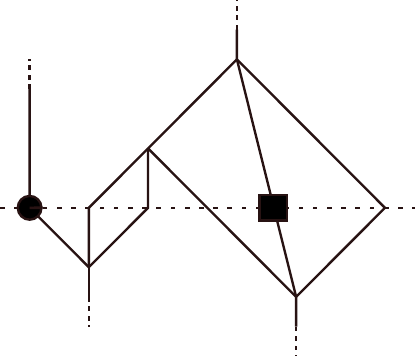}};
    \node at (4,2) {$\alpha$};
    \node at (6.5,2) {$\alpha'$};
    \node at (9.5,1.6) {.};
    \end{tikzpicture}
    \end{figure}

    Applying $\cL$ to the modified pair of trees gives the following local diagram: 
    
    \begin{figure}[H]
    \centering
    \begin{tikzpicture}[scale=0.7]
    %\draw[step=1cm,color=gray] (0,0) grid (10,6);%Uncomment this to get some helpful grid lines
    \node[anchor=south west,inner sep=0] at (0,0){\includegraphics[width=0.25\textwidth]{img/surjectivity/surj2b.pdf}};
    \node at (5.3,1.5) {$\rightsquigarrow$};
    \node[anchor=south west,inner sep=0] at (6,0){\includegraphics[width=0.27\textwidth]{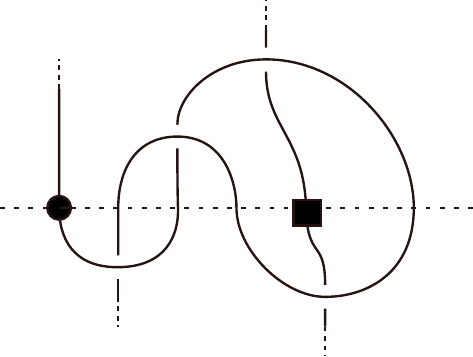}};
    \node at (10,3) {$C_{\alpha}$};
    \node at (11.5,1.6) {.};
    \end{tikzpicture}
    \end{figure}

    Observe that the diagram on the right can be modified via Reidemeister moves I and II to obtain the original diagram for $\cL(f)$. Hence, the modified link is in the isotopy class $\cL(f)$, but the component $C_{\alpha}$ passes through a leaf to the right of $\alpha'$.

    Define $f'\in F_3$ as the element obtained by iterating the above procedure until $C_{\alpha}$ passes through the rightmost leaf of the modified tree. Then $f'$ satisfies the lemma.
\end{proof}

We can now prove \cref{thm:surjective on pointed}.
\begin{proof}[Proof of \cref{thm:surjective on pointed}]
    Let $(L,C)\in\mathsf{Links_*}$ be a link with marked component $C$. By \cref{thm:Alexander type thm}, there exists $f\in F_3$ such that $\cL(f)=L$. If~$C$ is the central component of $\cL(f)$ we have that $\cL_*(f)=(L,C)$.

    Suppose $C$ is not the  central component of $\cL(f)$. Note that, Jones' algorithm guarantees that each component of $\cL(f)$ passes through at least one leaf of the upper tree $T_+$ of $f$. Hence, there exists a leaf $\alpha$ of $T_+$ such that $C$ passes through it. Define $f'\in F_3$ as the element obtained by applying \cref{lemma:move leaf} to $f$ and $\alpha$. Then, the element $f''\in F_3$ given by applying \cref{lemma:change central component} to $f'$ is such that $\cL(f'')=\cL(f')=\cL(f)$ and its central component is the component passing through $\alpha$, i.e.~it has central component $C$. Therefore $\cL_*(f'')=(L,C)$.
\end{proof}

Let $U_*\colon\mathsf{Links}_*\to\mathsf{Links}$ be the map obtained by forgetting the marking. We have the following commutative diagram of surjective maps:
\[
    \begin{tikzcd}
	&&& \mathsf{Links}_* \\
	\\
	{F_3} &&& \mathsf{Links}
	\arrow["U_*", two heads, from=1-4, to=3-4]
	\arrow["{\cL_*}", two heads, from=3-1, to=1-4]
	\arrow["\cL", two heads, from=3-1, to=3-4]
    \end{tikzcd},
\]
i.e.~$\cL=U_*\cL_*$.

\section{The central monoid}
\label{sec: central monoid}
We now define the main algebraic object of the following sections, the \textit{central monoid}. Starting from the group $(F_3, \cdot)$, we introduce the operation $-\diamond -$ and consider the monoid $(F_3, \diamond)$. This operation is a modified version of the attaching operations of  \cite{kodama2023alexanderstheoremstabilizersubgroups}.\\

Consider $f\in F_3$ with associated pair of reduced trees $(T_+, T_-)$. Throughout this section we will always consider reduced pairs of trees unless otherwise specified. 
\begin{definition}
\label{def: central leaf}
    The \textit{central leaf} of $f$, denoted by $l(f)$, is the leaf of $T_+$ corresponding to the subinterval of $[0,1]$ containing $1/2$ in the triadic subdivision determined by $f$. We denote by $I_f$ the corresponding interval -- see for example \cref{fig:central leaf}. 
    \begin{figure}[H]
        \centering
        \includegraphics[width=0.3\linewidth]{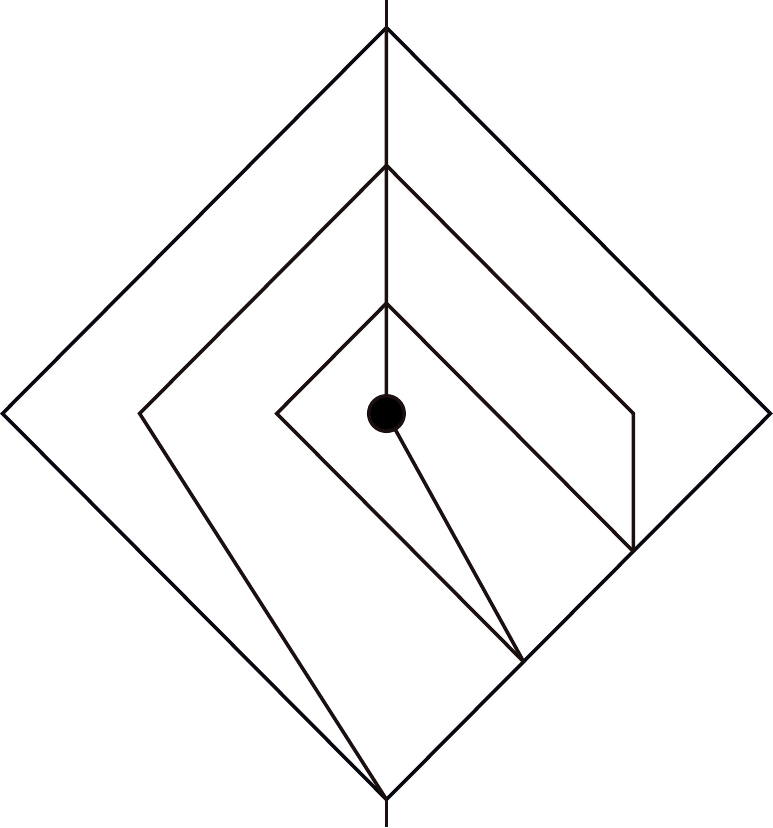}
        \caption{Example of central leaf, shown as marked vertex. Here $I_f=[13/27, 14/27]$.}
        \label{fig:central leaf}
    \end{figure}
\end{definition}

\begin{definition}
    Define $f\diamond g$ as the element of $F_3$ represented by the pair of trees obtained by attaching the pair of trees for $g$ in the position of the leaf $l(f)$, as illustrated in \cref{fig: example diamond}.
\end{definition}

\begin{figure}[H]
    \centering
    \begin{tikzpicture}
    %\draw[step=1cm,color=gray] (0,0) grid (10,4);%Uncomment this to get some helpful grid lines
    \node[anchor=south west,inner sep=0] at (0,0.3){\includegraphics[width=0.25\textwidth]{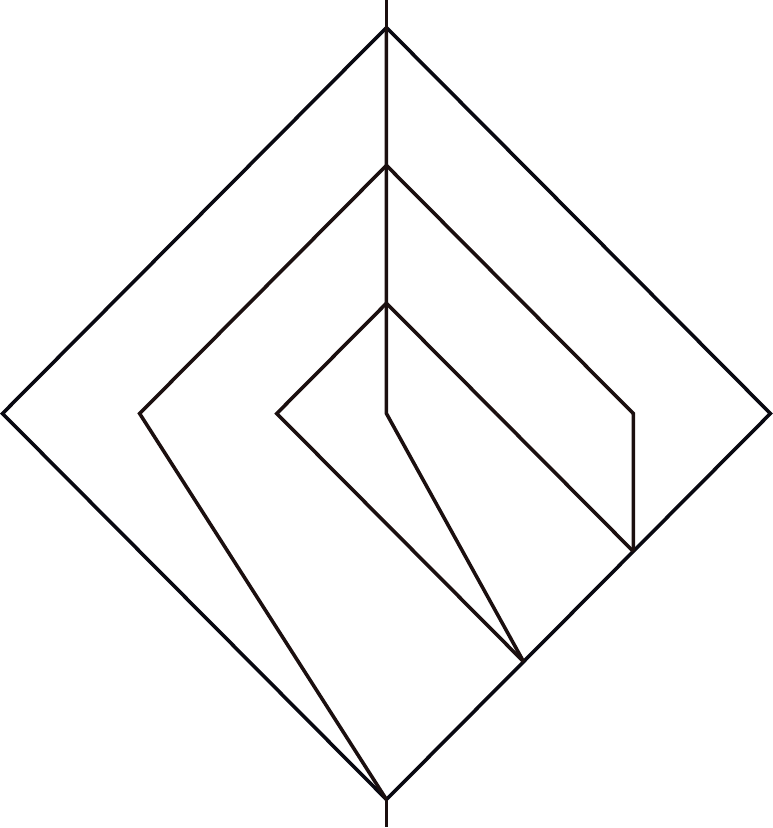}};
    \node at (3.6,2) {$\diamond$};
    \node[anchor=south west,inner sep=0] at (4,0.3){\includegraphics[width=0.25\textwidth]{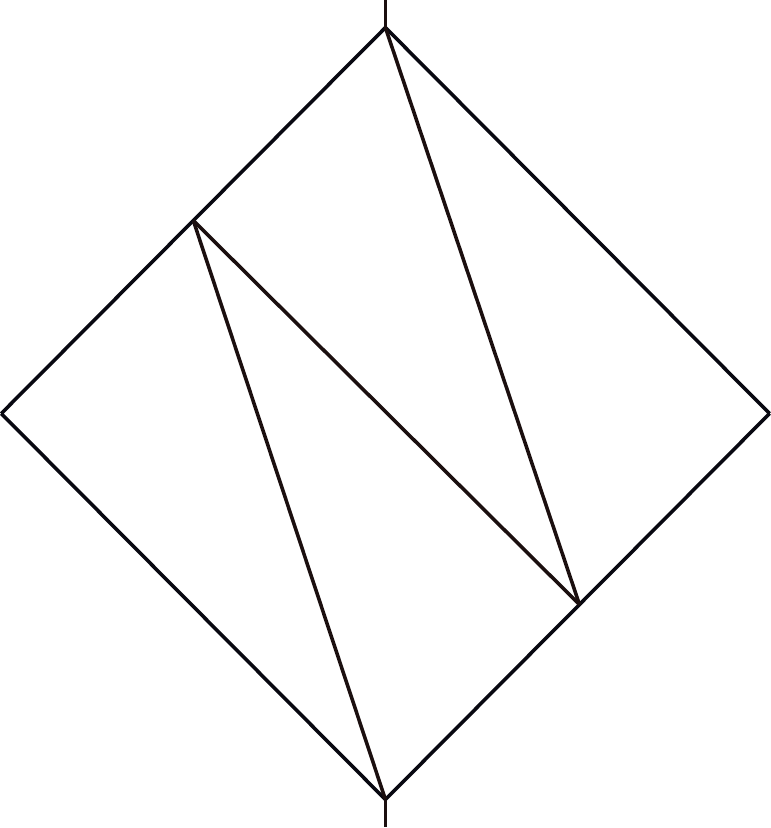}};
    \node at (8,2) {$=$};
    \node[anchor=south west,inner sep=0] at (8.5,0){\includegraphics[width=0.3\textwidth]{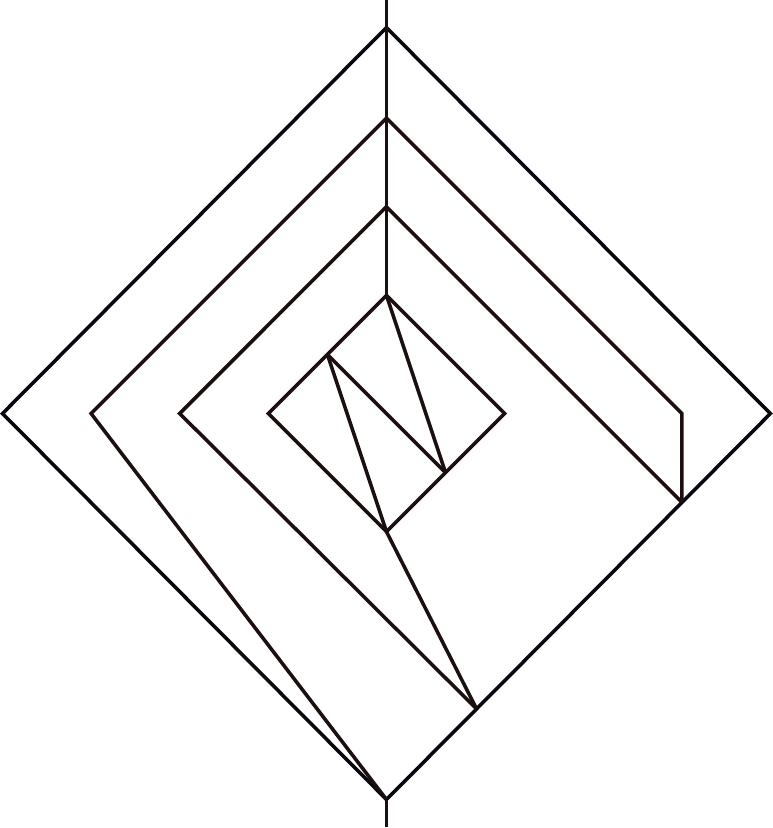}};
    \end{tikzpicture}
    \caption{An example of $f\diamond g \in F_3$.}
    \label{fig: example diamond}
\end{figure}

Note that $f\diamond g$ is well-defined as we are considering reduced pairs of trees. The map 
\[
    - \diamond -\colon F_3\times F_3 \longrightarrow F_3 
\]
defines an operation on $F_3$. For generic functions $f,g\in F_3$, we will use the following pictorial convention to represent the element $f\diamond g$.
\begin{figure}[H]
    \centering
    \begin{tikzpicture}[scale=0.7]
    %\draw[step=1cm,color=gray] (0,0) grid (6,6);%Uncomment this to get some helpful grid lines
    \node[anchor=south west,inner sep=0] at (0,0){\includegraphics[width=0.3\textwidth]{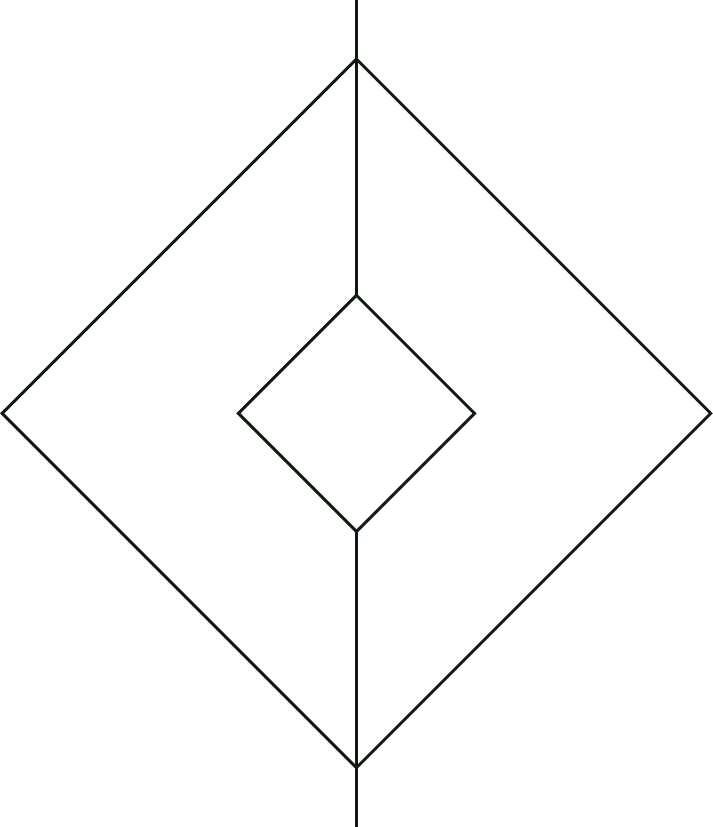}};
    \node at (2.7,3.1) {$g$};
    \node at (1.,3.3) {$f$};
    \end{tikzpicture}
    \label{fig:general fig of fdiamg}
\end{figure}

\begin{proposition}
\label{prop: diamond gives a monoid}
    The pair $(F_3, \diamond)$ is a monoid.
\end{proposition}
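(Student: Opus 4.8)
The plan is to verify the monoid axioms directly from the combinatorial description of $\diamond$ in terms of tree pairs, namely closure, associativity, and existence of a two-sided identity. Closure is immediate since the construction takes two reduced tree pairs and produces another reduced tree pair representing an element of $F_3$, and this is already noted to be well-defined. The substantive axioms are the identity and associativity.

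For the identity, I would first identify the correct candidate. The operation $f \diamond g$ grafts the tree pair of $g$ at the central leaf $l(f)$ of $f$; the natural unit is the element whose tree pair is trivial, i.e.~the identity $\mathrm{id} \in F_3$ represented by a single edge (no carets). I would check that $f \diamond \mathrm{id} = f$, because grafting a trivial tree at the central leaf does nothing. For the other side, $\mathrm{id} \diamond g$, the central leaf of $\mathrm{id}$ is its unique leaf, so grafting $g$ there reproduces exactly the tree pair of $g$, giving $\mathrm{id} \diamond g = g$. Both reductions are transparent at the level of pictures, so this step should be routine once the unit is correctly pinned down.

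The main work, and the step I expect to be the principal obstacle, is \textbf{associativity}: $(f \diamond g) \diamond h = f \diamond (g \diamond h)$. The difficulty is bookkeeping about \emph{which leaf is the central leaf} after grafting. I would argue that the central leaf of $f \diamond g$ is inherited in a controlled way from the central leaf of $g$ (since grafting $g$ at $l(f)$ places $g$'s subtree across the midpoint $1/2$, so the subinterval containing $1/2$ is determined inside the grafted copy of $g$), and hence that grafting $h$ into $(f \diamond g)$ produces the same combined tree pair as first forming $g \diamond h$ and then grafting the result into $f$. The cleanest route is to track the interval $I_{f \diamond g}$ explicitly: I would show $l(f \diamond g) = l(g)$ relocated into the $f$-subdivision, so that both sides of the associativity equation graft $h$ at the leaf corresponding to $l(g)$ sitting inside the copy of $g$ inside $f$. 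Establishing that the central leaf of a $\diamond$-product lives in the right-hand factor is the crux, and I would prove it using \cref{def: central leaf}: the point $1/2$ of $[0,1]$ always falls inside the rescaled copy of $g$ when $g$ is grafted at $l(f)$, because $l(f)$ is by definition the leaf whose interval $I_f$ contains $1/2$.

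Finally, I would assemble these into the conclusion that $(F_3, \diamond)$ satisfies all monoid axioms. Throughout I would lean on the fact, stressed in the excerpt, that reduced tree pairs are unique representatives, so equalities of elements of $F_3$ can be checked as equalities of reduced tree pairs; this lets me avoid any appeal to the group multiplication $\cdot$ and argue purely grafting-theoretically. I do not expect any subtlety in closure or the identity; the entire content is the position-tracking argument for associativity.
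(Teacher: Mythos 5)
Your proposal is correct and takes essentially the same route as the paper: the paper also takes the trivial reduced tree pair as the two-sided unit and settles associativity by the grafting picture, and your key bookkeeping claim that the central leaf of a product sits inside the right-hand factor is precisely the fact the paper records in address form ($l(f)=1^n$, $l(g)=1^m$ implies $l(f\diamond g)=1^{n+m}$). The only detail worth adding to your $1/2$-tracking argument is that the central-leaf interval $I_f$ is centred at $1/2$ (it is an iterated middle third), so the affine identification of $I_f$ with $[0,1]$ fixes $1/2$ and the leaf containing $1/2$ is exactly the relocated copy of $l(g)$, not merely some leaf of the grafted copy of $g$.
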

\begin{proof}
   Let $1_{F_3}$ denote the identity element in $F_3$. The reduced pair of trees associated to $1_{F_3}$ corresponds to the trivial tree, with no $4$-valent vertices. Therefore, for all $f\in F_3$ we have 
   \[
        f\diamond 1_{F_3}=1_{F_3}\diamond f=f \, .
   \]
   
   Note that for all $f,g,h\in F_3$ we have
   \[
   f\diamond (g\diamond h)=(f\diamond g)\diamond h,
   \]
    as shown in \cref{fig:associativity of diamond}, hence $-\diamond -$ satisfies associativity.   
\end{proof}
\begin{figure}[H]
    \centering
    \begin{tikzpicture}[scale=0.7]
    %\draw[step=1cm,color=gray] (0,0) grid (6,6);%Uncomment this to get some helpful grid lines
    \node[anchor=south west,inner sep=0] at (0,0){\includegraphics[width=0.35\textwidth]{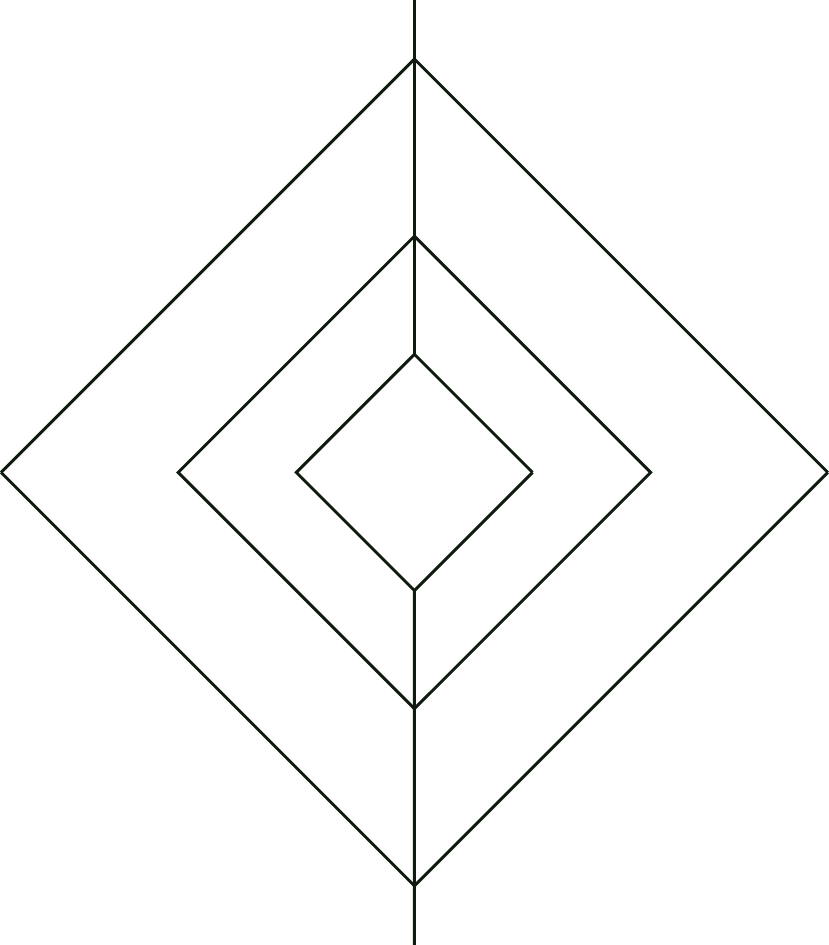}};
    \node at (1,3.7) {$f$};
    \node at (2,3.7) {$g$};
    \node at (3.2,3.7) {$h$};
    \end{tikzpicture}
    \caption{Pictorial representation of $f\diamond g\diamond h$.}
    \label{fig:associativity of diamond}
\end{figure}

\begin{definition}
    We call the monoid $(F_3, \diamond)$ the \textit{central monoid}.
\end{definition}

Observe that $(F_3,\diamond)$ has no invertible elements other than the identity. In fact, for any $f,g\in F_3$, the element $f\diamond g$ is represented by a pair of reduced trees with number of vertices given by the sum of the number of vertices of $f$ and $g$. Therefore, for any $f\neq 1_{F_3}$ there exists no $g\in F_3$ such that $f\diamond g=1_{F_3}$.

Specifically, $(F_3, \diamond)$ is a free infinitely generated monoid. The generators are the elements of $F_3$ such that the corresponding reduced pair of trees does not have a `diamond' around its central leaf, unless the element acts as the identity outside the diamond itself. Note that this does not exclude elements with diamonds around leaves other than the central one.

\begin{figure}[H]
    \centering
    \begin{subfigure}[b]{0.45\textwidth}
        \centering
        \includegraphics[width=0.55\textwidth]{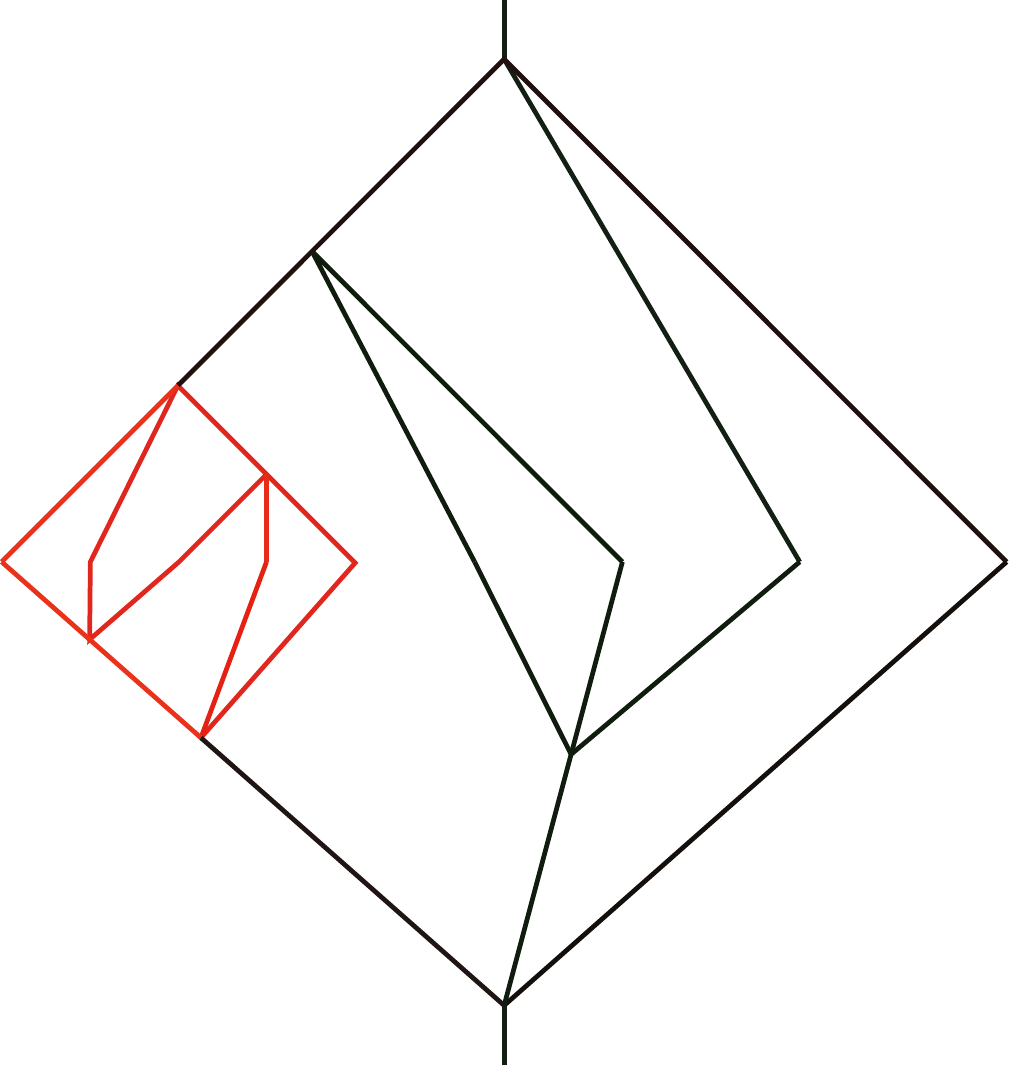}
        \caption*{A generator of $(F_3, \diamond)$.}
    \end{subfigure}
    \;
    \begin{subfigure}[b]{0.45\textwidth}
        \centering
        \includegraphics[width=0.55\textwidth]{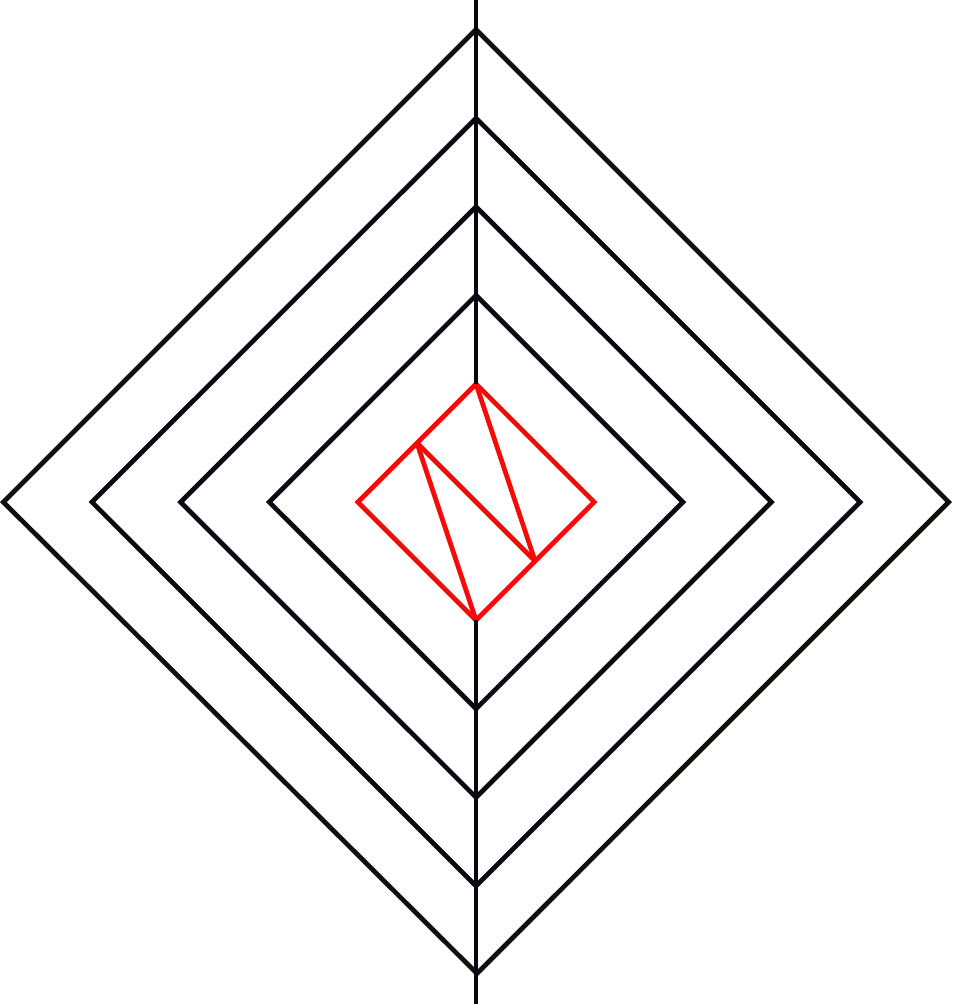}
        \caption*{A generator of $(F_3, \diamond)$.}
    \end{subfigure}
    \;
    \begin{subfigure}[b]{0.45\textwidth}
        \centering
        \includegraphics[width=0.55\textwidth]{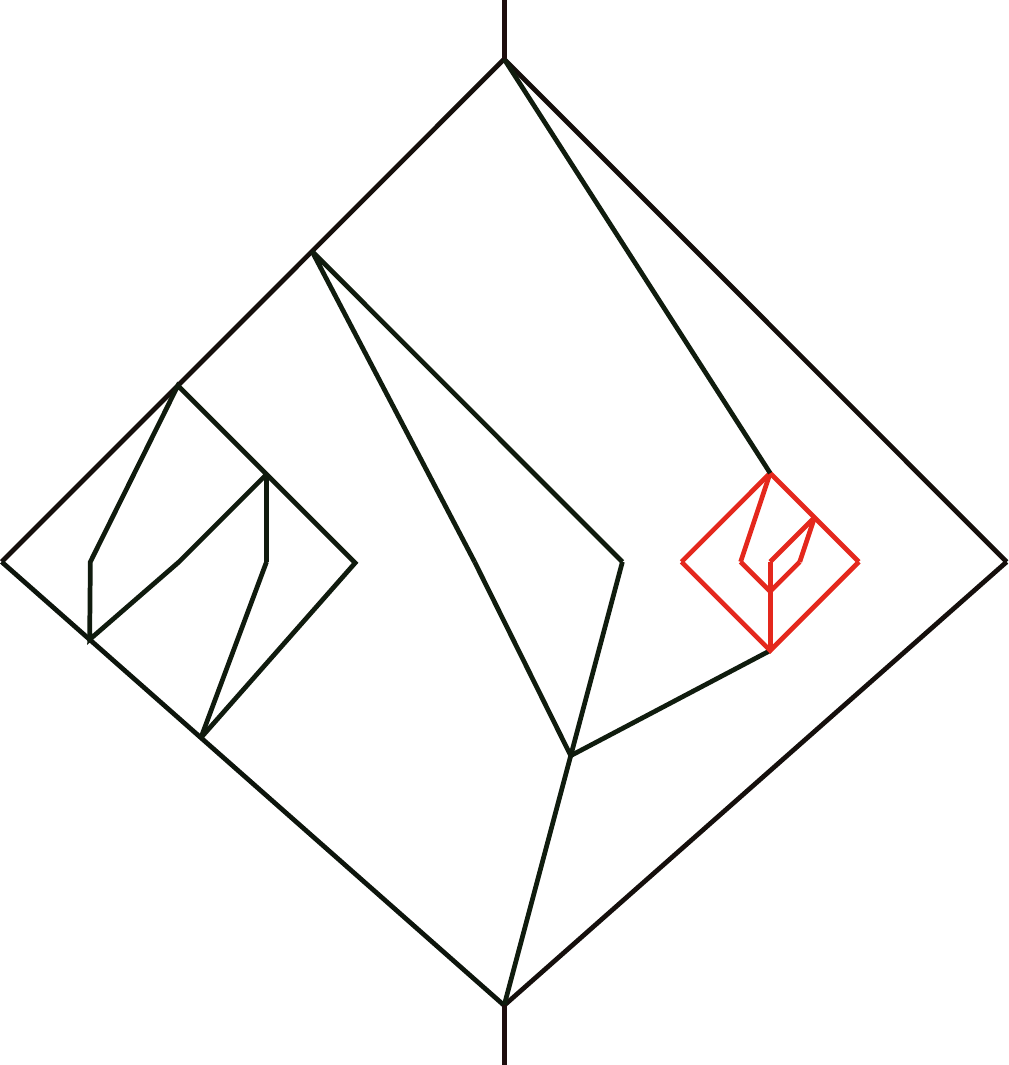}
        \caption*{Not a generator of $(F_3, \diamond)$.}
    \end{subfigure}
\end{figure}

Being free, $(F_3,\diamond)$ is a cancellative monoid, i.e.~for all $f,g,h\in F_3$ we have 
\[
    f\diamond g=f\diamond h  \Leftrightarrow g=h, \;\;\;\;\;\; g\diamond f=h\diamond f \Leftrightarrow g=h .
\]

\subsection{Comparing operations}
Consider the group $(F_3, \cdot)$. We compare the group operation with $-\diamond-$. It is in fact possible to express $f\diamond g$ in terms of composition of functions. This relation is observed in \cite[Remark 2.4]{kodama2023alexanderstheoremstabilizersubgroups} for the attaching operations.

Many of the results of the following section will not be related to later sections, but are included here as of independent interest.\\

Given a ternary tree $T$, we can describe the path from the root of~$T$ to any vertex by a sequence composed of $0$s, $1$s and $2$s using the following convention. The root of $T$ corresponds to the empty word $\emptyset$ and, any time the path meets a vertex, write $0$ to ``go left", $1$ to ``go straight" and $2$ to ``go right".

\begin{figure}[H]
    \centering
    \includegraphics[width=0.3\linewidth]{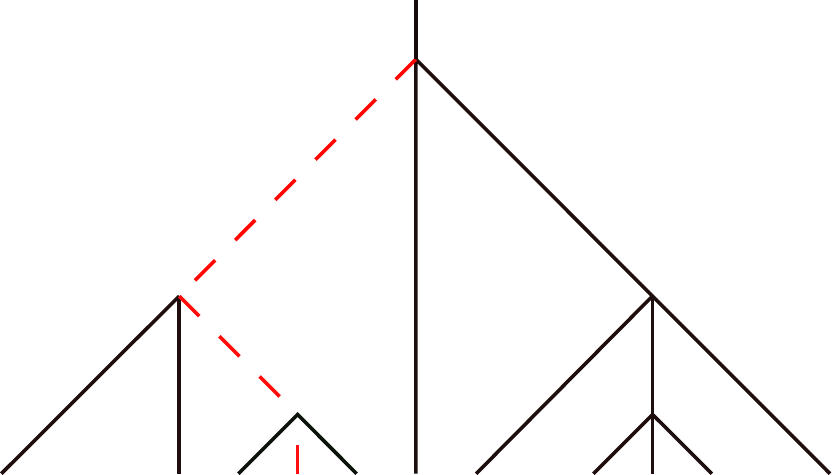}
    \caption{A tree $T$ with the path $\alpha=021$ in dashed lines.}
\end{figure}

Consider the set of finite sequences $\{0,1,2\}^*=\bigcup_n \{0,1,2\}^n$. Note that each vertex $v$ of $T$ is uniquely determined by the path going from the root to $v$, and, consequently, by a sequence $\alpha\in\{0,1,2\}^*$. We will therefore use $\alpha$ to denote the corresponding vertex of the tree, whenever it exists.

We call any finite sequence $\alpha\in\{0,1,2\}^*$ an \textit{address}, and use $|\alpha|$ to denote the length of $\alpha$. Endow addresses with the partial order $\preceq$ defined by 
\[
    \alpha\preceq\beta \Leftrightarrow \exists~\gamma\in\{0,1,2\}^* \text{ such that } \beta=\alpha\gamma,
\]
i.e.~$\alpha$ appears at the start of $\beta$.

Given a ternary tree $T$, let $V(T)$ be the set of all addresses corresponding to vertices of $T$. Fix $f\in F_3$ with pair of trees $(T_+, T_-)$. We use $V_+(f)$ and $V_-(f)$ to denote $V(T_+)$ and $V(T_-)$ respectively. 

Let $I_{\alpha}$ be the sub-interval of $I$ corresponding to the address~$\alpha$. Then, for any $f\in F_3$, we use $f(\alpha)$ to indicate the address in $V_-(f)$ corresponding to the interval $f(I_{\alpha})$.

\begin{figure}[H]
    \centering
    \includegraphics[width=0.35\linewidth]{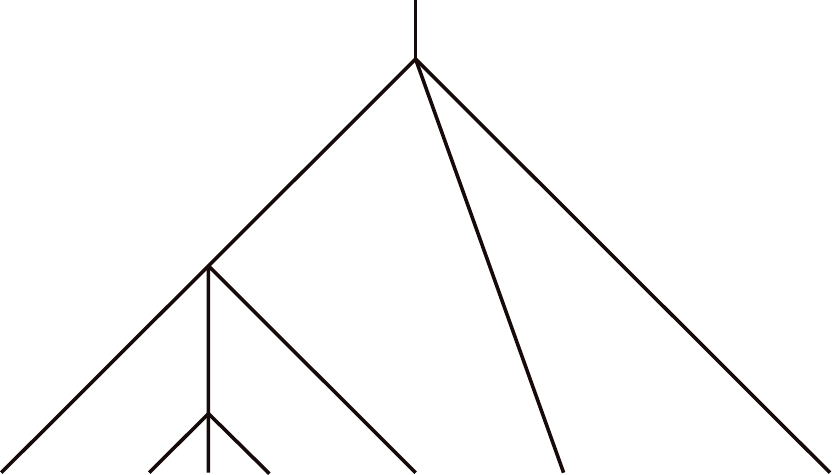}
    \caption{$V(T)=\{\emptyset, 0, 00, 01, 010, 011, 012, 02, 1, 2\}$.}
\end{figure}

Note that the central leaf $l(f)$ corresponds to the address in $V_+(f)$ given by the maximal sequence of $1$s. Specifically, the identity element has $l(1_{F_3})=\emptyset$ and, given $f,g\in F_3$ with $l(f)=1^n$ and $l(g)=1^m$, we have $l(f\diamond g)=1^{n+m}$ -- see \cref{fig:sum leaves}.
\begin{figure}[H]
    \centering
    \includegraphics[width=0.3\linewidth]{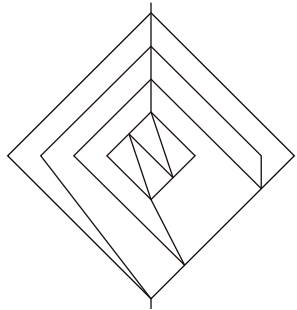}
    \caption{The element from \cref{fig: example diamond} has $l(f)=1^3$, $l(g)=1$ and $l(f\diamond g)=1^4$.}
    \label{fig:sum leaves}
\end{figure}

\begin{definition}
    \label{def: map phi alpha}
    Consider an address $\alpha$. Define $\varphi_{\alpha}$ as the map
    \[
        \varphi_{\alpha}\colon (F_3,\cdot) \to (F_3,\cdot)
    \]
    sending each $f\in F_3$ to the element $\varphi_{\alpha}(f)$, obtained by attaching the reduced pair of trees of $f$ in place of the address $\alpha$ of the upper tree of the pair of full ternary trees of depth $|\alpha|$, and then applying caret reduction.

    \begin{figure}[H]
    \centering
    \begin{tikzpicture}[scale=0.6]
    %\draw[step=1cm,color=gray] (0,0) grid (6,6);%Uncomment this to get some helpful grid lines
    \node[anchor=south west,inner sep=0] at (0,0){\includegraphics[width=0.3\textwidth]{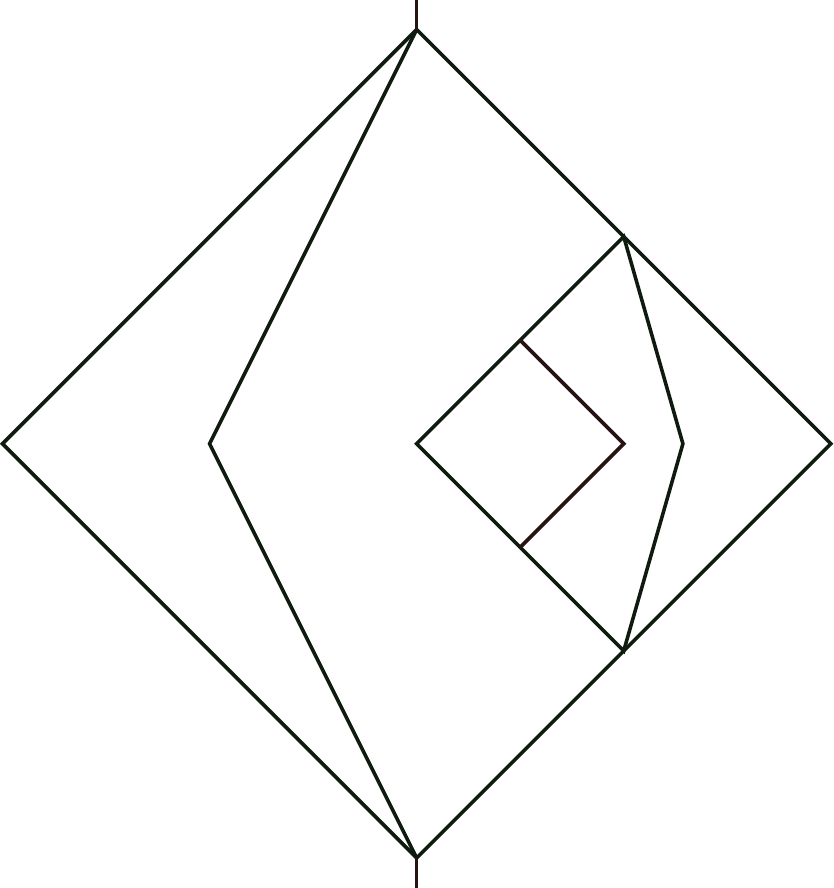}};
    \node at (3.9,3.4) {$f$};
    \end{tikzpicture}
    \caption{The element $\varphi_{20}(f)$.}
    \label{fig:example phi alpha}
    \end{figure}

    Let $\psi_{\alpha}$ be a linear isomorphism from $I_{\alpha}$ to $I$. Then, for any $f\in F_3$, its image under $\varphi_{\alpha}$ is the function 
    \begin{equation*}
        \varphi_{\alpha}(f)(t)= \left\{
        \begin{alignedat}{2}
            &t &&\quad t\in I\setminus I_{\alpha},\\
            &\psi_{\alpha}^{-1}f\psi_{\alpha}(t) &&\quad t\in I_{\alpha}.
        \end{alignedat}
        \right.
    \end{equation*}
    
    Observe that $\varphi_{\emptyset}=\id_{F_3}$.   
\end{definition}

In terms of intervals, the above example is represented by 
\begin{figure}[H]
    \centering
    \begin{tikzpicture}[scale=0.6]
    %\draw[step=1cm,color=gray] (0,0) grid (6,6);%Uncomment this to get some helpful grid lines
    \node at (0,1) {$\varphi_{20}(f)=$};
    \node[anchor=south west,inner sep=0] at (1.5,0){\includegraphics[width=0.3\textwidth]{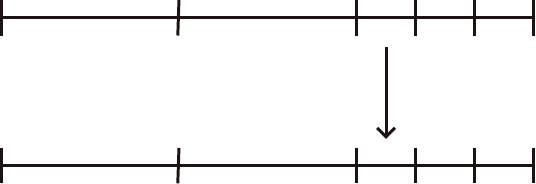}};
    \node at (5.5,1) {$f$};
    \node at (9,1) {.};
    \end{tikzpicture}
\end{figure}

The maps $\varphi_{\alpha}$ satisfy the following properties.

\begin{lemma}
\label{lemma: phi alpha is group hom}
    For any address $\alpha\in\{0,1,2\}^*$, the map $\varphi_{\alpha}$ is a group homomorphism.
\end{lemma}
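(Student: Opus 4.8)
The plan is to show that $\varphi_\alpha(f\cdot g)=\varphi_\alpha(f)\cdot\varphi_\alpha(g)$ for all $f,g\in F_3$, and I would argue this most cleanly using the functional description of $\varphi_\alpha$ given in \cref{def: map phi alpha} rather than the pictorial tree-grafting description. Recall that $\psi_\alpha\colon I_\alpha\to I$ is the linear isomorphism and that $\varphi_\alpha(f)$ acts as the identity on $I\setminus I_\alpha$ and as the conjugate $\psi_\alpha^{-1}f\psi_\alpha$ on $I_\alpha$. So I would first verify that $\varphi_\alpha(f)$ is genuinely an element of $F_3$: outside $I_\alpha$ it is the identity, and inside $I_\alpha$ it is a conjugate of an element of $F_3$ by the linear map $\psi_\alpha$, which sends the triadic subdivision of $I_\alpha$ to the triadic subdivision of $I$; since $\psi_\alpha$ has slope a power of $3$ and triadic breakpoints, conjugation preserves the defining properties (piecewise-linear, triadic breakpoints, slopes powers of $3$), so $\varphi_\alpha(f)\in F_3$. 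This also matches the tree-grafting description, so the two definitions agree.

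Next I would carry out the computation using the group convention $f\cdot g=g\circ f$ stated in the preliminaries. The key point is that every $\varphi_\alpha(f)$ fixes $I\setminus I_\alpha$ pointwise and maps $I_\alpha$ to itself bijectively (since $f$ fixes the endpoints of $I$, the conjugate fixes the endpoints of $I_\alpha$). Hence composing two such maps, each summand of the piecewise definition composes separately: on $I\setminus I_\alpha$ both act as the identity, so the composite is the identity there; on $I_\alpha$ we get
\begin{equation*}
    \bigl(\varphi_\alpha(f)\cdot\varphi_\alpha(g)\bigr)(t)
    =\varphi_\alpha(g)\bigl(\varphi_\alpha(f)(t)\bigr)
    =\psi_\alpha^{-1}g\psi_\alpha\psi_\alpha^{-1}f\psi_\alpha(t)
    =\psi_\alpha^{-1}(g\circ f)\psi_\alpha(t)
    =\psi_\alpha^{-1}(f\cdot g)\psi_\alpha(t),
\end{equation*}
which is exactly $\varphi_\alpha(f\cdot g)(t)$ on $I_\alpha$. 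Matching both pieces of the piecewise definition gives $\varphi_\alpha(f\cdot g)=\varphi_\alpha(f)\cdot\varphi_\alpha(g)$. Since $\varphi_\alpha(1_{F_3})$ acts as the identity on all of $I$ (the conjugate of the identity is the identity), $\varphi_\alpha$ preserves the unit, so it is a group homomorphism.

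The one subtlety I expect to watch for, which is the main obstacle, is the bookkeeping of reduced versus unreduced pairs of trees: the tree-grafting description of $\varphi_\alpha$ explicitly invokes caret reduction, and one might worry that grafting the trees of $f$ and then of $g$ separately and composing need not literally coincide with grafting the tree of the product $f\cdot g$ before reduction. Working at the level of the underlying piecewise-linear homeomorphisms of $[0,1]$ sidesteps this entirely, since caret reduction does not change the homeomorphism, only its tree presentation. For this reason I would phrase the whole argument in terms of the interval maps and the explicit formula above, treating the tree picture in \cref{fig:example phi alpha} merely as illustration; the invariance of the homeomorphism under caret reduction is what guarantees the pictorial and functional definitions agree and hence that the homomorphism property, proved functionally, is the statement claimed.
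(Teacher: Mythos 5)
Your proof is correct and follows essentially the same route as the paper's: both work with the functional description of $\varphi_\alpha$ (identity off $I_\alpha$, conjugation by $\psi_\alpha$ on $I_\alpha$) and verify the homomorphism property by the conjugation computation plus the unit check. Your extra care about the composition convention $f\cdot g=g\circ f$ and about caret reduction not affecting the underlying homeomorphism are sound observations, but they do not change the argument, which is the paper's.
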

\begin{proof}
    For any $\alpha\in\{0,1,2\}^*$, we have $\varphi_{\alpha}(1_{F_3})=1_{F_3}$.
    
    Consider now $f,g\in F_3$, then $\varphi_{\alpha}(fg)$ is the identity outside of $I_{\alpha}$ and, for all $t\in I_{\alpha}$, we have 
    \[
    \varphi_{\alpha}(fg)(t)=\psi_{\alpha}^{-1}(fg)\psi_{\alpha}(t)=(\psi_{\alpha}^{-1}f\psi_{\alpha})\cdot(\psi_{\alpha}^{-1}g\psi_{\alpha})(t)=\varphi_{\alpha}(f)\varphi_{\alpha}(g)(t).
    \]
    Therefore $\varphi_{\alpha}(fg)=\varphi_{\alpha}(f)\cdot\varphi_{\alpha}(g)$.   
\end{proof}

\begin{lemma}
\label{lemma:commuting phi}
    Let $\alpha,\beta\in\{0,1,2\}^*$ be such that $\alpha\npreceq\beta$ and $\beta\npreceq\alpha$. Then, for any $f,g\in F_3$, we have
    \[
        \varphi_{\alpha}(f)\cdot\varphi_{\beta}(g)=\varphi_{\beta}(g)\cdot\varphi_{\alpha}(f).
    \]
\end{lemma}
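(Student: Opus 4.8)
The plan is to reduce the identity to a single geometric fact: incomparable addresses index intervals with disjoint interiors. Recall from \cref{def: map phi alpha} that $\varphi_\alpha(f)$ equals $\psi_\alpha^{-1} f \psi_\alpha$ on $I_\alpha$ and is the identity on $I \setminus I_\alpha$. Since every element of $F_3$ fixes the endpoints $0$ and $1$ of $I$, the conjugate $\psi_\alpha^{-1} f \psi_\alpha$ fixes the endpoints of $I_\alpha$; consequently $\varphi_\alpha(f)$ fixes every point of the \emph{closed} complement $I \setminus I_\alpha^{\circ}$ and carries the open interval $I_\alpha^{\circ}$ bijectively onto itself. The whole argument then rests on showing that the open supports $I_\alpha^{\circ}$ and $I_\beta^{\circ}$ are disjoint.

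First I would prove this disjointness. Let $\gamma$ be the longest common prefix of $\alpha$ and $\beta$. Because neither $\alpha \preceq \beta$ nor $\beta \preceq \alpha$, both addresses strictly extend $\gamma$, so maximality of $\gamma$ lets me write $\alpha = \gamma a \alpha'$ and $\beta = \gamma b \beta'$ with $a, b \in \{0,1,2\}$ and $a \neq b$. By the recursive triadic subdivision, $I_{\gamma a}$ and $I_{\gamma b}$ are distinct thirds of $I_\gamma$ and so meet in at most one endpoint; since $I_\alpha \subseteq I_{\gamma a}$ and $I_\beta \subseteq I_{\gamma b}$, the interiors $I_\alpha^{\circ}$ and $I_\beta^{\circ}$ are disjoint.

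Finally I would verify the identity pointwise, using the convention $p \cdot q = q \circ p$ so that the two sides are $\varphi_\beta(g) \circ \varphi_\alpha(f)$ and $\varphi_\alpha(f) \circ \varphi_\beta(g)$. On $I_\alpha^{\circ}$ the map $\varphi_\beta(g)$ acts trivially (as $I_\alpha^{\circ} \subseteq I \setminus I_\beta^{\circ}$) while $\varphi_\alpha(f)$ preserves $I_\alpha^{\circ}$, so both composites restrict to $\varphi_\alpha(f)$ there; symmetrically both restrict to $\varphi_\beta(g)$ on $I_\beta^{\circ}$; and on the remaining closed set $I \setminus (I_\alpha^{\circ} \cup I_\beta^{\circ})$ both maps fix every point, so both composites are the identity. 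These three regions cover $I$, so the composites coincide. The only genuine content is the disjointness lemma; the potential pitfall — that $I_\alpha$ and $I_\beta$ can share a boundary point — is harmless precisely because $\varphi_\alpha(f)$ and $\varphi_\beta(g)$ fix such shared endpoints, which is why phrasing everything in terms of the open supports makes the pointwise check go through without a separate continuity argument.
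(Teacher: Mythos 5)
Your proof is correct and follows essentially the same route as the paper: incomparable addresses give intervals with disjoint (interiors of) supports, hence the two maps commute. The only difference is that you handle the boundary subtlety explicitly — the paper asserts $I_{\alpha}\cap I_{\beta}=\emptyset$ outright, which is slightly imprecise when the closed intervals share an endpoint, and your passage to open supports together with the observation that both maps fix shared endpoints cleanly repairs this minor gap.
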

\begin{proof}
     Note that, if $\alpha\npreceq\beta$ and $\beta\npreceq\alpha$, then $I_{\alpha}\cap I_{\beta}=\emptyset$. Thus, the maps $\varphi_{\alpha}(f)$ and $\varphi_{\beta}(g)$ act as the identity on $I_{\beta}$ and $I_{\alpha}$ respectively, and the images commute.
\end{proof}

\begin{figure}[H]
        \centering
        \begin{tikzpicture}[scale=0.7]
        %\draw[step=1cm,color=gray] (0,0) grid (6,6);%Uncomment this to get some helpful grid lines
        \node[anchor=south west,inner sep=0] at (0,0){\includegraphics[width=0.3\textwidth]{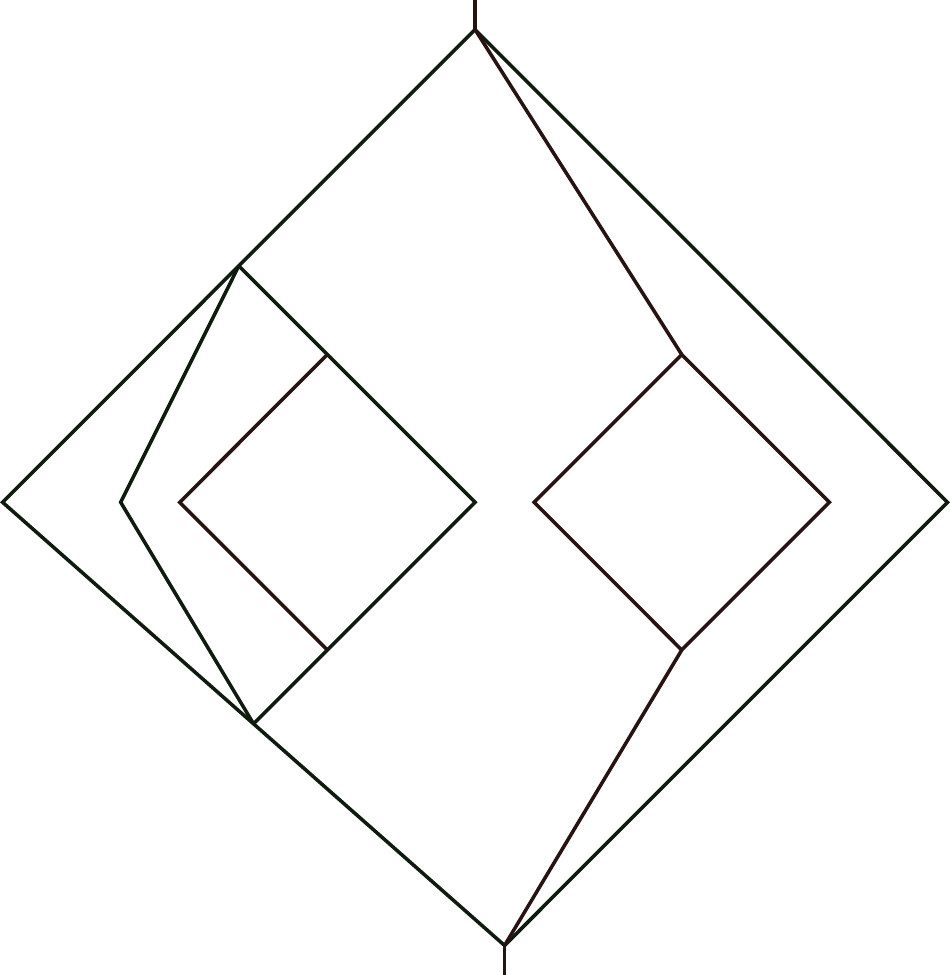}};
        \node at (1.8,2.8) {$f$};
        \node at (3.9,2.7) {$g$};
        \end{tikzpicture}
        \caption{The element $\varphi_{02}(f)\cdot\varphi_1(g)=\varphi_1(g)\cdot\varphi_{02}(f)$.}
        \label{fig:phi_alpha and phi_beta}
\end{figure}
    
We can now express $-\diamond -$ in terms of composition of functions.

\begin{lemma}
\label{lemma: new op and old op}
    Given $f,g \in F_3$ we have 
    \[
        f\diamond g= \varphi_{l(f)}(g)\cdot f= f\cdot \varphi_{f(l(f))}(g) .
    \]
\end{lemma}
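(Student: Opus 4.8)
The plan is to prove both equalities by realising $f\diamond g$ as an honest element of $F_3$, i.e.~a piecewise-linear self-map of $[0,1]$, and comparing it against the two right-hand expressions using the explicit formula for $\varphi_\alpha$ from \cref{def: map phi alpha}. Throughout one must keep the composition convention $a\cdot b = b\circ a$ in mind, so that $\varphi_{l(f)}(g)\cdot f = f\circ\varphi_{l(f)}(g)$ and $f\cdot\varphi_{f(l(f))}(g) = \varphi_{f(l(f))}(g)\circ f$. First I would record the one structural fact that drives everything: since $l(f)$ is a \emph{leaf} of $T_+$, the map $f$ is affine on $I_f=I_{l(f)}$ and carries it bijectively onto the leaf interval $I_{f(l(f))}$ of $T_-$. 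Writing $\alpha:=l(f)$ and $\beta:=f(l(f))$, and letting $\psi_\alpha,\psi_\beta$ be the linear isomorphisms onto $I$ from \cref{def: map phi alpha}, this says precisely that $\restr{f}{I_\alpha}=\psi_\beta^{-1}\psi_\alpha\colon I_\alpha\to I_\beta$.

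Next I would unwind the definition of $f\diamond g$. Grafting the pair of trees of $g$ at the central leaf refines the domain subdivision of $f$ inside $I_\alpha$ by a rescaled copy of $T_+^g$, and correspondingly refines the range subdivision inside $I_\beta$ by a rescaled copy of $T_-^g$; outside $I_\alpha$ neither the subdivision nor the map changes. Concretely this yields
\[
    (f\diamond g)(t) =
    \begin{cases}
        f(t) & t\in I\setminus I_\alpha,\\
        \psi_\beta^{-1} g\, \psi_\alpha(t) & t\in I_\alpha.
    \end{cases}
\]
Establishing this formula carefully is the heart of the argument: the key subtlety is that the grafted copy of $g$ acts through $\psi_\alpha$ on the domain side but is reinserted into the range via $\psi_\beta^{-1}$, so that $f\diamond g$ genuinely agrees with $f$ off $I_\alpha$.

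For the first equality I would then compute $f\circ\varphi_\alpha(g)$ case by case. For $t\notin I_\alpha$ one has $\varphi_\alpha(g)(t)=t$, so the composite is $f(t)$; for $t\in I_\alpha$ one has $\varphi_\alpha(g)(t)=\psi_\alpha^{-1}g\,\psi_\alpha(t)\in I_\alpha$, and applying $\restr{f}{I_\alpha}=\psi_\beta^{-1}\psi_\alpha$ gives $\psi_\beta^{-1}g\,\psi_\alpha(t)$. This matches the displayed formula exactly, giving $f\diamond g=\varphi_{l(f)}(g)\cdot f$. For the second equality I would verify the conjugation identity $f\circ\varphi_\alpha(g)=\varphi_\beta(g)\circ f$. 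Here the two cases are governed by $I_\beta$: since $f$ is a bijection with $f(I_\alpha)=I_\beta$, a point $f(t)$ lies in $I_\beta$ iff $t\in I_\alpha$, so off $I_\beta$ both sides act as $f$, while on $I_\beta$ one again uses $\restr{f}{I_\alpha}=\psi_\beta^{-1}\psi_\alpha$ to see that both sides equal $\psi_\beta^{-1}g\,\psi_\alpha(t)$ for $t\in I_\alpha$. Combined with the first equality this yields $f\diamond g=f\cdot\varphi_{f(l(f))}(g)$.

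The only genuinely delicate points are bookkeeping ones: correctly translating the tree-grafting definition of $\diamond$ into the displayed piecewise formula (in particular that the graft is reinserted inside $I_\beta$ via $\psi_\beta^{-1}$, not inside $I_\alpha$), and keeping the convention $a\cdot b=b\circ a$ straight throughout. Once the formula for $f\diamond g$ and the affinity identity $\restr{f}{I_\alpha}=\psi_\beta^{-1}\psi_\alpha$ are in hand, both equalities reduce to the two elementary two-case checks above.
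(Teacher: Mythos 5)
Your proof is correct and follows essentially the same route as the paper's: both identify $f\diamond g$ as the map agreeing with $f$ off $I_{l(f)}$ and factoring through the rescaled copy of $g$ on $I_{l(f)}$, and both hinge on the fact that $f$ restricted to the leaf interval is a linear isomorphism. Your explicit two-case check of the second equality (the conjugation identity $f\circ\varphi_{\alpha}(g)=\varphi_{f(\alpha)}(g)\circ f$) is exactly what the paper compresses into ``similarly, one proves the second equality'' and later records as a separate lemma, so you have simply filled in details the paper omits.
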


\begin{proof}
    Consider $f,g\in F_3$, then $f\diamond g$ coincides with $f$ on $I \setminus I_{l(f)}$. On the interval $I_{l(f)}$ we have
    \[
    \begin{tikzcd}
    I_{l(f)} \arrow[r, "\simeq"] & I \arrow[r, "g"] & I \arrow[r, "\simeq"] & f(I_{l(f)}).
    \end{tikzcd}
    \]
    By definition of $\varphi_{l(f)}(g)$, the function $\varphi_{l(f)}(g)\cdot f$ acts as $f$ outside of~$I_{l(f)}$, and on $I_{l(f)}$ we have
    \[
    \begin{tikzcd}
    \varphi_{l(f)}(g)\cdot f\colon
    I_{l(f)} \arrow[r, "\psi_{l(f)}"] & I \arrow[r, "g"] & I \arrow[r, "\psi_{l(f)}^{-1}"] & I_{l(f)} \arrow[r, "f"] & f(I_{l(f)}).
    \end{tikzcd}
    \]
    But $l(f)$ is a leaf, therefore $f\colon I_{l(f)}\to f(I_{l(f)})$ is a linear isomorphism. Hence, we conclude that 
    \[
        f\diamond g= \varphi_{l(f)}(g)\cdot f.
    \]
    Similarly, one proves the second equality.

    Pictorially, the above argument correspond to the equality in \cref{fig:pictorial rep of fdiamg}.
\end{proof}

\begin{figure}[H]
        \centering
        \begin{tikzpicture}
        %\draw[step=1cm,color=gray] (0,0) grid (15,3);%Uncomment this to get some helpful grid lines
        \node[anchor=south west,inner sep=0] at (0,0){\includegraphics[width=0.3\textwidth]{img/diamond_op/general_fdiamg.pdf}};
        \node at (1.9,2.2) {$g$};
        \node at (1.2,2.9) {$f$};
        \node at (4.2,2.1) {$=$};
        \node[anchor=south west,inner sep=0] at (4.7,0){\includegraphics[width=0.3\textwidth]{img/diamond_op/associativity.pdf}};
        \node at (6.65,2.1) {$g$};
        \node at (5.2,2.1) {$\cdots$};
        \node at (5.2,2.3) {$n$};
        \node at (9,2.1) {$\cdot$};
        \node[anchor=south west,inner sep=0] at (9.5,0){\includegraphics[width=0.3\textwidth]{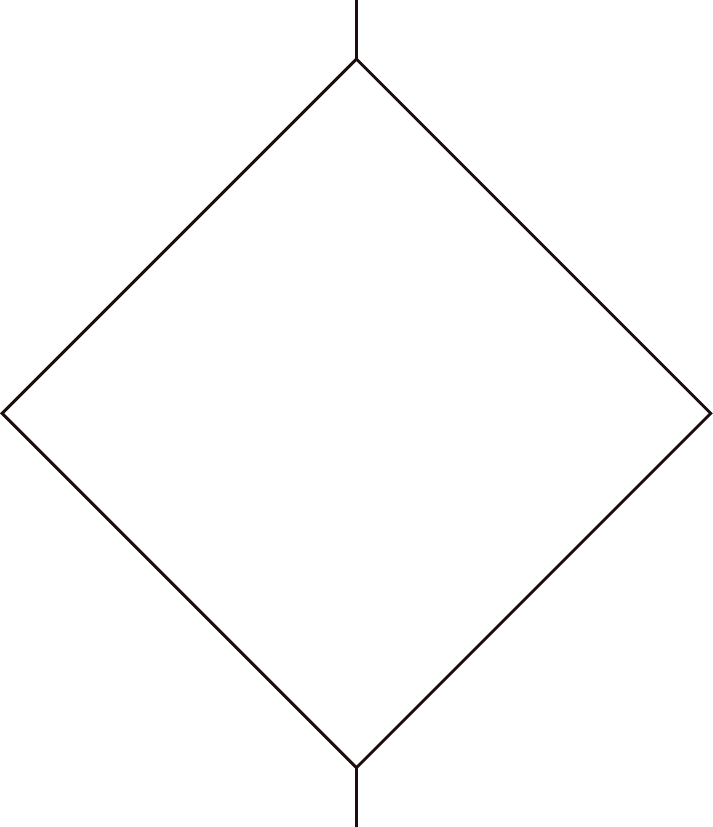}};
        \node at (11.35,2.2) {$f$};
        \end{tikzpicture}
        \caption{Pictorial representation of $f\diamond g=\varphi_{l(f)}(g)\cdot f$ for $l(f)=1^{n+2}$.}
        \label{fig:pictorial rep of fdiamg}
\end{figure}

With a similar reasoning, one obtains the following result.
\begin{lemma}
    For any $f,g\in F_3$ and $\alpha$ an address in $V_+(f)$ corresponding to a leaf of $f$, we have $f^{-1}\cdot\varphi_{\alpha}(g)\cdot f=\varphi_{f(\alpha)}(g)$.
\end{lemma}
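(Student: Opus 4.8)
The plan is to unwind everything into ordinary function composition and reduce the claim to the fact that an orientation-preserving affine bijection of $[0,1]$ is forced to be the identity. First I would translate the group-theoretic expression using the convention $a\cdot b = b\circ a$: reading $f^{-1}\cdot\varphi_\alpha(g)\cdot f$ left to right yields the honest conjugate $f\circ\varphi_\alpha(g)\circ f^{-1}$ as a self-homeomorphism of $I$. Since $\varphi_\alpha(g)$ acts as the identity on $I\setminus I_\alpha$ by \cref{def: map phi alpha}, for $t\notin f(I_\alpha)=I_{f(\alpha)}$ we have $f^{-1}(t)\notin I_\alpha$, so the conjugate fixes $t$; thus $f\circ\varphi_\alpha(g)\circ f^{-1}$ is supported on $I_{f(\alpha)}$, which already matches the support of $\varphi_{f(\alpha)}(g)$.

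It then remains to compare the two maps on $I_{f(\alpha)}$. Here I would use crucially that $\alpha$ is a leaf of $T_+$, so $f$ restricts to an affine order-preserving isomorphism $f\colon I_\alpha\to I_{f(\alpha)}$. Writing $\psi_\alpha\colon I_\alpha\to I$ and $\psi_{f(\alpha)}\colon I_{f(\alpha)}\to I$ for the linear charts of \cref{def: map phi alpha}, the composite
\[
    \psi_\alpha\circ f^{-1}\circ\psi_{f(\alpha)}^{-1}\colon I\to I
\]
is a composition of three affine order-preserving bijections, hence is itself an affine order-preserving bijection of $[0,1]$ onto itself; the only such map is $\id_I$. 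This yields the chart identities $\psi_{f(\alpha)}=\psi_\alpha\circ f^{-1}$ and $\psi_{f(\alpha)}^{-1}=f\circ\psi_\alpha^{-1}$, which form the heart of the argument.

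Substituting these into the conjugate restricted to $I_{f(\alpha)}$,
\[
    f\circ\varphi_\alpha(g)\circ f^{-1}\big|_{I_{f(\alpha)}}
    = f\circ\psi_\alpha^{-1}\circ g\circ\psi_\alpha\circ f^{-1}
    = \psi_{f(\alpha)}^{-1}\circ g\circ\psi_{f(\alpha)}
    = \varphi_{f(\alpha)}(g)\big|_{I_{f(\alpha)}},
\]
and combining this with the support computation above gives $f^{-1}\cdot\varphi_\alpha(g)\cdot f=\varphi_{f(\alpha)}(g)$ as required.

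The one genuine subtlety, and the step I expect to be the main obstacle, is the bookkeeping around the composition convention $a\cdot b=b\circ a$ together with the cancellation of the charts: one must check that the affine maps glue to the identity on the nose, which is exactly where the hypothesis that $\alpha$ is a \emph{leaf} (rather than an interior vertex) is essential, since otherwise $f|_{I_\alpha}$ need not be affine and $f(I_\alpha)$ need not be a single subinterval $I_{f(\alpha)}$. Everything else is routine, and as in \cref{lemma: new op and old op} the computation can be recorded pictorially by tracing a point of $I_{f(\alpha)}$ through the chain $I_{f(\alpha)}\xrightarrow{f^{-1}}I_\alpha\xrightarrow{\psi_\alpha}I\xrightarrow{g}I\xrightarrow{\psi_\alpha^{-1}}I_\alpha\xrightarrow{f}I_{f(\alpha)}$.
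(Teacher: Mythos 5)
Your proof is correct and takes essentially the same approach as the paper: the paper states this lemma without a written proof, remarking only that it follows ``with a similar reasoning'' to \cref{lemma: new op and old op}, and that reasoning is exactly your argument -- split into the support $I\setminus I_{f(\alpha)}$ versus $I_{f(\alpha)}$, use that $\alpha$ being a leaf makes $f\colon I_\alpha\to I_{f(\alpha)}$ a linear isomorphism, and cancel the charts. Your explicit verification that $\psi_\alpha\circ f^{-1}\circ\psi_{f(\alpha)}^{-1}=\id_I$ and your bookkeeping with the convention $a\cdot b=b\circ a$ simply fill in details the paper leaves implicit.
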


Consider the function 
\[
    \Phi\colon (F_3, \diamond) \to (\Aut(F_3), \circ),
\] 
defined by $\Phi(f)= \varphi_{l(f)}$, the group homomorphism associated to the central leaf of $f$, for all $f\in F_3$.
\begin{lemma}
    The map $\Phi$ is a monoid homomorphism.
\end{lemma}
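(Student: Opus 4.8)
The plan is to verify directly the two axioms defining a monoid homomorphism: that $\Phi$ sends the identity of $(F_3, \diamond)$ to the identity of $(\Aut(F_3), \circ)$, and that it is multiplicative. The identity of $(F_3, \diamond)$ is $1_{F_3}$, whose reduced pair of trees is trivial, so its central leaf is the root and $l(1_{F_3}) = \emptyset$. Since $\varphi_{\emptyset} = \id_{F_3}$ by \cref{def: map phi alpha}, we obtain $\Phi(1_{F_3}) = \varphi_{\emptyset} = \id_{F_3}$, which is the identity of $(\Aut(F_3), \circ)$.

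For multiplicativity, I would first isolate the key algebraic fact that composing the maps $\varphi_{\alpha}$ corresponds to concatenating addresses, namely
\[
    \varphi_{\alpha} \circ \varphi_{\beta} = \varphi_{\alpha\beta} \qquad \text{for all } \alpha, \beta \in \{0,1,2\}^* .
\]
To prove this I would work with the interval description from \cref{def: map phi alpha}. For $g \in F_3$, the element $\varphi_{\beta}(g)$ acts as the identity outside $I_{\beta}$ and as $\psi_{\beta}^{-1} g\, \psi_{\beta}$ on $I_{\beta}$; applying $\varphi_{\alpha}$ then produces a map supported on the subinterval of $I_{\alpha}$ that $\psi_{\alpha}$ carries onto $I_{\beta}$, which is precisely $I_{\alpha\beta}$, and on $I_{\alpha\beta}$ the resulting map is $\psi_{\alpha}^{-1}\psi_{\beta}^{-1} g\, \psi_{\beta}\psi_{\alpha}$. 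The decisive point is that $\psi_{\beta} \circ \psi_{\alpha}$ and $\psi_{\alpha\beta}$ are both orientation-preserving linear isomorphisms $I_{\alpha\beta} \to I$, hence coincide; substituting gives $\psi_{\alpha\beta}^{-1} g\, \psi_{\alpha\beta}$ on $I_{\alpha\beta}$ and the identity elsewhere, which is exactly $\varphi_{\alpha\beta}(g)$.

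With this identity in hand, multiplicativity of $\Phi$ follows from the behaviour of the central leaf under $\diamond$. Writing $l(f) = 1^n$ and $l(g) = 1^m$, the discussion preceding \cref{def: map phi alpha} records that $l(f \diamond g) = 1^{n+m}$, which is exactly the concatenation $l(f)\, l(g)$. Therefore
\[
    \Phi(f \diamond g) = \varphi_{l(f \diamond g)} = \varphi_{l(f)\, l(g)} = \varphi_{l(f)} \circ \varphi_{l(g)} = \Phi(f) \circ \Phi(g),
\]
which completes the verification.

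I expect the only genuine work to lie in the composition identity $\varphi_{\alpha} \circ \varphi_{\beta} = \varphi_{\alpha\beta}$, and within it the identification $\psi_{\beta} \circ \psi_{\alpha} = \psi_{\alpha\beta}$ on $I_{\alpha\beta}$: one must check that the support of $\varphi_{\alpha}(\varphi_{\beta}(g))$ is genuinely $I_{\alpha\beta}$ and track the conjugating maps carefully, rather than argue purely combinatorially on the trees. Everything else — the unit axiom and the fact that the central-leaf addresses concatenate under $\diamond$ — is immediate from the definitions and from facts already established in this section.
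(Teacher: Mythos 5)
Your proof is correct and takes essentially the same route as the paper's: check the unit axiom via $\Phi(1_{F_3})=\varphi_{\emptyset}=\id_{F_3}$, then combine $l(f\diamond g)=1^{n+m}$ with the composition rule $\varphi_{1^n}\circ\varphi_{1^m}=\varphi_{1^{n+m}}$. The only difference is that you prove the identity $\varphi_{\alpha}\circ\varphi_{\beta}=\varphi_{\alpha\beta}$ in full generality via the interval/support description (including the key point $\psi_{\beta}\circ\psi_{\alpha}=\psi_{\alpha\beta}$ on $I_{\alpha\beta}$), whereas the paper simply asserts this step for $\alpha=1^n$, $\beta=1^m$ without proof.
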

\begin{proof}
    By definition $\Phi(1)=\varphi_{\emptyset}=\id_{F_3}$. Consider $f,g\in F_3$, with central leaves $l(f)=1^n$ and $l(g)=1^m$, then
    \[
        \Phi(f\diamond g)= \varphi_{l(f\diamond g)} =\varphi_{1^{n+m}}=\varphi_{1^n}\circ \varphi_{1^m}=\varphi_{l(f)} \circ\varphi_{l(g)}=\Phi(f)\circ\Phi(g).
    \]
    Hence, $\Phi$ is a monoid homomorphism.
\end{proof}

The map $\Phi$ gives an action of the monoid $(F_3, \diamond)$ on $(F_3, \cdot)$. Note also that $\Phi(f\diamond g)=\Phi(g\diamond f)$, since the central leaves $l(f\diamond g)$ and $l(g\diamond f)$ have the same address.

The maps defined satisfy the following relation.
\begin{proposition}
    For all $f,g,h\in F_3$ we have
    \[
    \varphi_{l(f)}(g\diamond h)=\varphi_{l(f\diamond g)}(h)\cdot \varphi_{l(f)}(g).
    \]
   
\end{proposition}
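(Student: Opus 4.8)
The plan is to reduce everything to the group operation $\cdot$ and then feed it through the three structural results already in hand. The right-hand side is exactly what one gets by first expanding the inner $\diamond$ as an ordinary product in $(F_3,\cdot)$ and then applying the homomorphism $\varphi_{l(f)}$ factor by factor, so the whole identity should fall out by concatenating \cref{lemma: new op and old op}, \cref{lemma: phi alpha is group hom}, and the monoid homomorphism property of $\Phi$.

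Concretely, first I would rewrite the interior $\diamond$ using \cref{lemma: new op and old op}, which gives
\[
    g \diamond h = \varphi_{l(g)}(h) \cdot g .
\]
Applying $\varphi_{l(f)}$ to both sides and using that $\varphi_{l(f)}$ is a group homomorphism (\cref{lemma: phi alpha is group hom}), the product splits as
\[
    \varphi_{l(f)}(g \diamond h)
    = \varphi_{l(f)}\bigl(\varphi_{l(g)}(h)\bigr) \cdot \varphi_{l(f)}(g)
    = \bigl(\varphi_{l(f)} \circ \varphi_{l(g)}\bigr)(h) \cdot \varphi_{l(f)}(g) .
\]
It then remains only to identify the composite $\varphi_{l(f)} \circ \varphi_{l(g)}$ with $\varphi_{l(f \diamond g)}$. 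This is precisely the content of the lemma stating that $\Phi$ is a monoid homomorphism: writing $l(f)=1^n$ and $l(g)=1^m$, one has $\varphi_{1^{n}} \circ \varphi_{1^{m}} = \varphi_{1^{n+m}}$, and since $l(f \diamond g)=1^{n+m}$ this composite equals $\varphi_{l(f\diamond g)}$. Substituting gives $\varphi_{l(f)}(g\diamond h)=\varphi_{l(f\diamond g)}(h)\cdot \varphi_{l(f)}(g)$, as required.

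There is no serious obstacle: the argument is a direct chain of three established lemmas, with no new geometric input. The single point that deserves a word of care is verifying that the composite $\varphi_{l(f)} \circ \varphi_{l(g)}$ carries the address $l(f\diamond g)$ rather than some unrelated address. This is guaranteed because central leaves are maximal strings of $1$s, so their concatenation $1^{n}1^{m}=1^{n+m}$ is again the central-leaf address of $f\diamond g$; this matches the identity $l(f\diamond g)=1^{n+m}$ used in the proof that $\Phi$ is a monoid homomorphism, and so the composite is $\varphi_{l(f\diamond g)}$ exactly as needed.
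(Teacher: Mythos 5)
Your proof is correct and is essentially identical to the paper's: both rewrite $g\diamond h=\varphi_{l(g)}(h)\cdot g$ via \cref{lemma: new op and old op}, split the image under $\varphi_{l(f)}$ using \cref{lemma: phi alpha is group hom}, and identify $\varphi_{l(f)}\circ\varphi_{l(g)}=\varphi_{l(f\diamond g)}$ via the monoid homomorphism property of $\Phi$. Your closing remark about concatenation of central-leaf addresses is just an unpacking of that last lemma, so there is no substantive difference in approach.
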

\begin{proof}
    Using the fact that $\Phi$ is a monoid homomorphism together with \cref{lemma: new op and old op}, we have
    \begin{align*}
        \varphi_{l(f)}(g\diamond h)
        &= \varphi_{l(f)}(\varphi_{l(g)}(h)\cdot g)= \\
        &= \varphi_{l(f)}(\varphi_{l(g)}(h))\cdot\varphi_{l(f)}(g)=\\
        &= \varphi_{l(f\diamond g)}(h)\cdot\varphi_{l(f)}(g).
        \qedhere
    \end{align*}
\end{proof}

\subsection{Other monoids}
\label{subsec: other monoids}
In the definition of $(F_3, \diamond)$ we fixed the central leaf as our address for reasons coming from low-dimensional topology (see \cref{sec: connected sum}), but one could define similar monoids using a different address.

In order to obtain an operation on $F_3$ analogous to $\diamond$, we need to have a well-defined choice of address. Fix $f\in F_3$ and $\alpha\in V_+(f)$ corresponding to a leaf of $f$. Consider the map
\[
    f\diamond_{\alpha}-\colon F_3 \to F_3,
\]
sending $g\in F_3$ to $f\diamond_{\alpha}g$, obtained by attaching $g$ at the address $\alpha$ of $f$. Note that $f\diamond_{\alpha}g=\varphi_{\alpha}(g)\cdot f$.

\begin{definition}
    For $i\in\{0,1,2\}$, define the operation $-\diamond_i -$ on $F_3$ as
    \[
        f\diamond_i g:=f\diamond_{\alpha_i}g,
    \]
    where $\alpha_i$ is the address corresponding to the leaf $i^m$ of $f$. 
\end{definition}
Note that for $f=1_{F_3}$ we have $i^m=\emptyset$, the root of the upper tree, hence $1_{F_3}\diamond_i g=g\diamond_i 1_{F_3}=g$ for all $g\in F_3$. Moreover $\diamond_1$ gives back $\diamond$.

\begin{proposition}
    The pair $(F_3, \diamond_i)$ is a monoid for all $i\in\{0,1,2\}$.
\end{proposition}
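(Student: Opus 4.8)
The plan is to show that $(F_3, \diamond_i)$ satisfies the two monoid axioms, associativity and the existence of a two-sided identity, by exactly mirroring the proof of \cref{prop: diamond gives a monoid}. Since the case $i=1$ recovers $\diamond$, which has already been shown to be a monoid operation, the content is to check that the argument does not rely on the specific choice of the central leaf but only on the fact that the address $\alpha_i = i^m$ is canonically determined by $f$ and is the address of a \emph{leaf} of the upper tree.

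First I would verify the identity axiom. For $f = 1_{F_3}$ the upper tree is trivial, so $i^m = \emptyset$ for every $i$, and by the remark preceding the statement $1_{F_3}\diamond_i g = g \diamond_i 1_{F_3} = g$ for all $g\in F_3$. This gives a two-sided identity directly.

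Next I would establish associativity, $f \diamond_i (g \diamond_i h) = (f \diamond_i g)\diamond_i h$. The clean way is to use the function-composition description $f \diamond_\alpha g = \varphi_\alpha(g)\cdot f$ together with the conjugation relation $f^{-1}\cdot \varphi_\alpha(g)\cdot f = \varphi_{f(\alpha)}(g)$ for $\alpha$ a leaf of $f$, both stated earlier. The key combinatorial fact I would record is that for each $i$ the leaf address is additive under $\diamond_i$: writing $l_i(f)$ for the address $i^m$ of $f$, one has $l_i(1_{F_3}) = \emptyset$ and, attaching $g$ at the $i$-corner of $f$ extends the pure-$i$ branch by the pure-$i$ branch of $g$, so $l_i(f \diamond_i g) = i^{n+m}$ when $l_i(f)=i^n$ and $l_i(g)=i^m$; this is the analogue of the displayed identity $l(f\diamond g)=1^{n+m}$ and is what lets the map $f \mapsto \varphi_{l_i(f)}$ behave multiplicatively. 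Using $\varphi_{i^{n+m}} = \varphi_{i^n}\circ\varphi_{i^m}$ (which holds because attaching at the $i$-branch of depth $n$ and then at the $i$-branch of depth $m$ realises the same nested substitution as attaching at depth $n+m$), one computes both $f\diamond_i(g\diamond_i h)$ and $(f\diamond_i g)\diamond_i h$ as the same element; alternatively, and most transparently, associativity is immediate from the pictorial description, since both sides attach $g$ at the $i^{\,}$-leaf of $f$ and $h$ at the iterated $i$-leaf, and this nesting of reduced tree pairs is unambiguous.

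The only genuine point requiring care -- the part I would single out as the main obstacle -- is confirming that $\diamond_i$ is \emph{well-defined}, i.e.\ that $f \diamond_i g$ has an unambiguous reduced pair of trees. This rests on $\alpha_i = i^m$ being the address of an actual leaf of the reduced upper tree of $f$, so that there is a canonical place to attach $g$; this is exactly the hypothesis built into the definition ``$\alpha_i$ is the address corresponding to the leaf $i^m$ of $f$,'' and I would note that the maximal pure-$i$ branch always terminates at a leaf since the tree is finite. Once well-definedness is in hand, the verification of the axioms is routine and identical in form to \cref{prop: diamond gives a monoid}, so I would present it concisely, referencing that proof rather than repeating the pictures.
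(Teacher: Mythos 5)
Your proof is correct and takes essentially the same route as the paper, whose proof is a one-line observation that $1_{F_3}$ is the identity and that associativity holds just as in \cref{prop: diamond gives a monoid}. The additional detail you supply---well-definedness of attaching at the leaf $i^m$, and the algebraic check of associativity via $\varphi_{i^n}\circ\varphi_{i^m}=\varphi_{i^{n+m}}$ together with $f\diamond_\alpha g=\varphi_\alpha(g)\cdot f$---is sound and consistent with the paper's own lemmas, merely more explicit than the paper's pictorial shortcut.
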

\begin{proof}
    Similarly to \cref{prop: diamond gives a monoid}, one observes that $1_{F_3}$ is the identity element and, for all $f,g,h\in F_3$, we have $(f\diamond_i g)\diamond_i h=f\diamond_i (g\diamond_i h)$.
\end{proof}

As a consequence of \cref{lemma:commuting phi} we have the following corollary.
\begin{corollary}
    Given $i,j\in \{0,1,2\}$ such that $i\neq j$, and $f,g,h\in F_3$, we have
    \[
        (f\diamond_i g)\diamond_j h= (f\diamond_j h)\diamond_i g.
    \]
\end{corollary}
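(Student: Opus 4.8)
The plan is to prove the final corollary, namely that for distinct $i,j\in\{0,1,2\}$ and any $f,g,h\in F_3$ we have $(f\diamond_i g)\diamond_j h = (f\diamond_j h)\diamond_i g$, by translating everything into the language of the homomorphisms $\varphi_\alpha$ and invoking the commutation result of \cref{lemma:commuting phi}. The basic observation I would use repeatedly is the identity $f\diamond_\alpha g = \varphi_\alpha(g)\cdot f$ recorded just before the definition of $\diamond_i$, together with the specialisation $f\diamond_i g = \varphi_{\alpha_i}(g)\cdot f$, where $\alpha_i$ denotes the leaf of $f$ addressed by the maximal run $i^m$.

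First I would expand the left-hand side. Writing $\alpha_i$ for the $i^m$-address of $f$, we have $f\diamond_i g = \varphi_{\alpha_i}(g)\cdot f$. The key point is to identify the $j$-leaf of the new element $f\diamond_i g$: since $i\neq j$, attaching $g$ at the address $i^m$ does not alter the subtree along the pure-$j$ branch, so the leaf addressed by the maximal run of $j$'s in $f\diamond_i g$ is exactly the leaf $\beta_j = j^{m'}$ of $f$ itself. (This is the step where I would need to argue carefully, but it is essentially combinatorial: the diamond for $g$ is grafted strictly inside the $i$-branch, which is disjoint from the $j$-branch when $i\neq j$.) Consequently $(f\diamond_i g)\diamond_j h = \varphi_{\beta_j}(h)\cdot(f\diamond_i g) = \varphi_{\beta_j}(h)\cdot\varphi_{\alpha_i}(g)\cdot f$. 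By the symmetric computation, $(f\diamond_j h)\diamond_i g = \varphi_{\alpha_i}(g)\cdot\varphi_{\beta_j}(h)\cdot f$, using that the $i$-leaf of $f\diamond_j h$ is again the $i$-leaf $\alpha_i$ of $f$.

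Comparing the two expressions, the result reduces to the single commutation $\varphi_{\beta_j}(h)\cdot\varphi_{\alpha_i}(g) = \varphi_{\alpha_i}(g)\cdot\varphi_{\beta_j}(h)$. Since $\alpha_i = i^{m}$ and $\beta_j = j^{m'}$ with $i\neq j$, neither address is a prefix of the other, i.e. $\alpha_i\npreceq\beta_j$ and $\beta_j\npreceq\alpha_i$, so \cref{lemma:commuting phi} applies directly and gives exactly this commutation. Chaining the equalities then yields $(f\diamond_i g)\diamond_j h = (f\diamond_j h)\diamond_i g$, as required.

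I expect the main obstacle to be the bookkeeping in the middle step, namely verifying that the pure-$j$ address of $f\diamond_i g$ coincides with that of $f$ (and symmetrically). Conceptually this is immediate because the two pure-colour branches are disjoint for $i\neq j$, but one has to be slightly careful that grafting $g$ does not change the \emph{length} of the opposite monochromatic run in a way that shifts which leaf is selected; since $g$ is attached strictly below the $i^m$-leaf and this leaf lies off the $j$-branch, the $j$-branch and its maximal run are untouched, so $\beta_j$ is indeed common to $f$ and to $f\diamond_i g$. Once this is granted, the argument is a short formal manipulation resting entirely on \cref{lemma:commuting phi}.
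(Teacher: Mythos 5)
Your proof is correct and is essentially the paper's own argument: the paper presents the corollary as an immediate consequence of \cref{lemma:commuting phi}, and your expansion $(f\diamond_i g)\diamond_j h=\varphi_{\beta_j}(h)\cdot\varphi_{\alpha_i}(g)\cdot f$ via $f\diamond_\alpha g=\varphi_\alpha(g)\cdot f$, together with the combinatorial check that grafting inside the $i$-branch leaves the pure-$j$ leaf untouched, is exactly the detail the paper leaves implicit. One caveat applies equally to your write-up and to the paper: when $f=1_{F_3}$ both addresses degenerate to the root $\emptyset$, so the prefix hypothesis of \cref{lemma:commuting phi} fails and the identity itself can fail (e.g.\ $(1_{F_3}\diamond_0 g)\diamond_2 h=g\diamond_2 h$ versus $(1_{F_3}\diamond_2 h)\diamond_0 g=h\diamond_0 g$, which are distinct reduced tree pairs in general), so the statement tacitly assumes $f\neq 1_{F_3}$.
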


\begin{figure}[H]
\centering
\begin{tikzpicture}
%\draw[step=1cm,color=gray] (0,0) grid (8,8);%Uncomment this to get some helpful grid lines
\node[anchor=south west,inner sep=0] at (0,0){\includegraphics[width=0.35\textwidth]{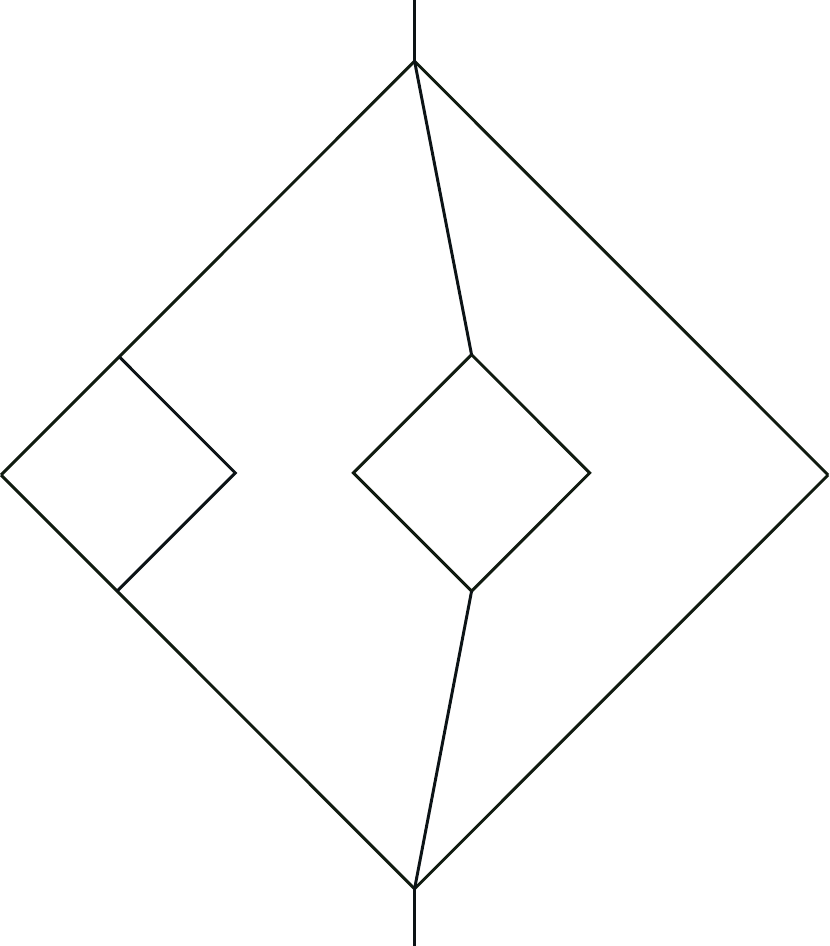}};
\node at (0.6,2.5) {$g$};
\node at (2.5,2.6) {$h$};
\node at (1.6,3.3) {$f$};
\end{tikzpicture}
\caption{The element $(f\diamond_0 g)\diamond_1 h= (f\diamond_1 h)\diamond_0 g$.}
\label{figure:diamond i j}
\end{figure}

\section{Connected sum and prime decomposition}
\label{sec: connected sum}

Using Jones' map $\cL\colon F_3 \to \mathsf{Links}$, and its extension $\cL_*\colon F_3 \to \mathsf{Links}_*$, we translate the results of \cref{sec: central monoid} into low-dimensional topology. Specifically, we focus on the connection with connected sum and prime decomposition.

Note that, for any $f\in F_3$, the central component of $\cL_*(f)$ contains the central leaf $l(f)$ by construction. Consider the following modified version of the map $\cL$.
\begin{definition}
    Define $\cK\colon F_3 \to \mathsf{Knots}$ to be the map associating to each element $f\in F_3$ the knot $\cK(f)$, given by the central component of~$\cL_*(f)$.
\end{definition}

Note that, if $\cL(f)$ is connected, then $\cK(f)=\cL(f)$. In particular, by \cref{thm:Alexander type thm}, this implies that the map $\cK$ is surjective.

Given $\cK(f)$ we consider the orientation on it obtained by closing the central component to the left of the diagram and orienting it clockwise (see \cref{fig:oriented central component}).

\begin{figure}[H]
    \centering
    \includegraphics[width=0.3\linewidth]{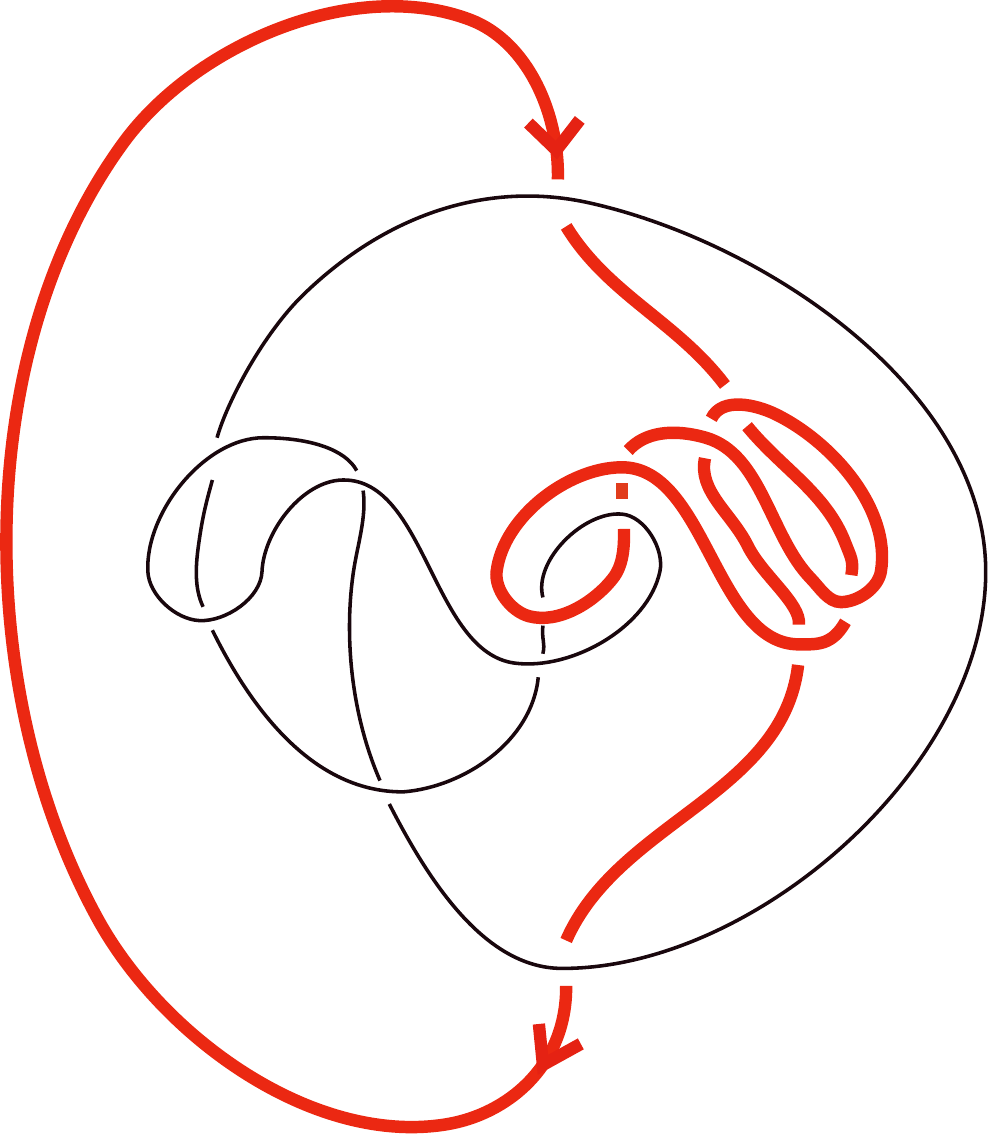}
    \caption{Example of $\cK(f)$ (bold) with assigned orientation.}
    \label{fig:oriented central component}
\end{figure}

Note that, if $f\in F_3$ has image $\cK(f)$ with orientation induced by the above convention, the element $\Bar{f}\in F_3$ obtained by rotating $f$ by $180$ degrees has image $\cK(f)$ with opposite orientation.

The $-\diamond -$ operation corresponds to connected sum.

\begin{proposition}
\label{prop: connected sum of knots}
    For all $f,g\in F_3$ we have 
    \[
        \cK(f\diamond g)=\cK(f)\#\cK(g) .
    \]
    Moreover $\cK(f\diamond g)=\cK(g\diamond f)$.
\end{proposition}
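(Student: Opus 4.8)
The plan is to analyze what $\cK(f\diamond g)$ looks like as a diagram and show it is precisely the connected sum of $\cK(f)$ and $\cK(g)$ along their central components. First I would recall from \cref{lemma: new op and old op} that $f\diamond g = \varphi_{l(f)}(g)\cdot f$, so the upper tree of $f\diamond g$ is obtained by grafting the pair of trees of $g$ at the central leaf $l(f)$. Applying Jones' algorithm to this tree pair, the crossings coming from the vertices of $f$ are assembled exactly as in the diagram for $\cK(f)$, while the crossings coming from the grafted copy of $g$ form a self-contained tangle sitting inside the region corresponding to the interval $I_{l(f)}$. Since $l(f)$ is the central leaf, this grafted region lies on the strand of the central component of $\cL_*(f)$.

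The key geometric observation is that the subdiagram produced by the $g$-part is a $1$-string tangle whose closure (joining its two endpoints) is exactly $\cK(g)$, embedded along an arc of the central strand of $\cK(f)$. This is precisely the configuration of a connected sum: the central component of $\cL_*(f\diamond g)$ is obtained by cutting open an arc of $\cK(f)$ at the central leaf and splicing in the $1$-string tangle whose closure is $\cK(g)$. I would verify that the orientations match: using the clockwise convention from \cref{fig:oriented central component}, the strand enters and exits the grafted tangle coherently, so the resulting connected sum respects orientation and hence gives the well-defined connected sum $\cK(f)\#\cK(g)$ of oriented knots.

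For the commutativity statement $\cK(f\diamond g)=\cK(g\diamond f)$, I would argue this follows from two facts. First, connected sum of knots is commutative, so $\cK(f)\#\cK(g)=\cK(g)\#\cK(f)$. Second, by the first part applied to both orderings, $\cK(f\diamond g)=\cK(f)\#\cK(g)$ and $\cK(g\diamond f)=\cK(g)\#\cK(f)$, and these agree. Thus the commutativity of $\cK\circ\diamond$ is inherited from the commutativity of $\#$ on $\mathsf{Knots}$, even though $\diamond$ itself is not commutative on $F_3$.

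The main obstacle I expect is making the identification of the grafted $g$-tangle as a $1$-string tangle with closure $\cK(g)$ fully rigorous, rather than merely pictorial. The subtlety is that the Jones diagram for $g$ on its own closes its two roots into the central component, whereas inside $f\diamond g$ the two ``root ends'' of the $g$-region are instead connected to the rest of the $f$-diagram along the central strand; I would need to check carefully that the string running through the grafted region is a single unknotted arc away from the $g$-crossings (so that contracting it recovers the standard closure of $g$), and that no extra linking with other components of $\cL(f)$ is introduced. This amounts to a local isotopy argument in the spirit of \cref{lemma:change central component} and \cref{lemma:move leaf}, tracking how the central strand threads through the diamond region.
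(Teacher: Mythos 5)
Your proposal is correct and follows essentially the same route as the paper: the paper's own proof simply observes (with reference to a figure) that grafting the tree pair of $g$ at the central leaf $l(f)$ splices $\cK(g)$ into the central strand of $\cK(f)$ as an oriented connected sum, and then deduces $\cK(f\diamond g)=\cK(g\diamond f)$ from commutativity of $\#$, exactly as you do. Your write-up is in fact more explicit than the paper's, which leaves the one-string-tangle observation entirely to a picture, so the rigor gap you flag as an obstacle is not something the paper's proof addresses either.
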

\begin{proof}
    Observe that the choice of component in the definition of $\cK$ guarantees that the orientation convention is respected by the $-\diamond-$ operation, giving us a well-defined connected sum of knots -- see example \cref{fig:example conn sum}.
    \begin{figure}[H]
    \centering
    \includegraphics[width=0.35\linewidth]{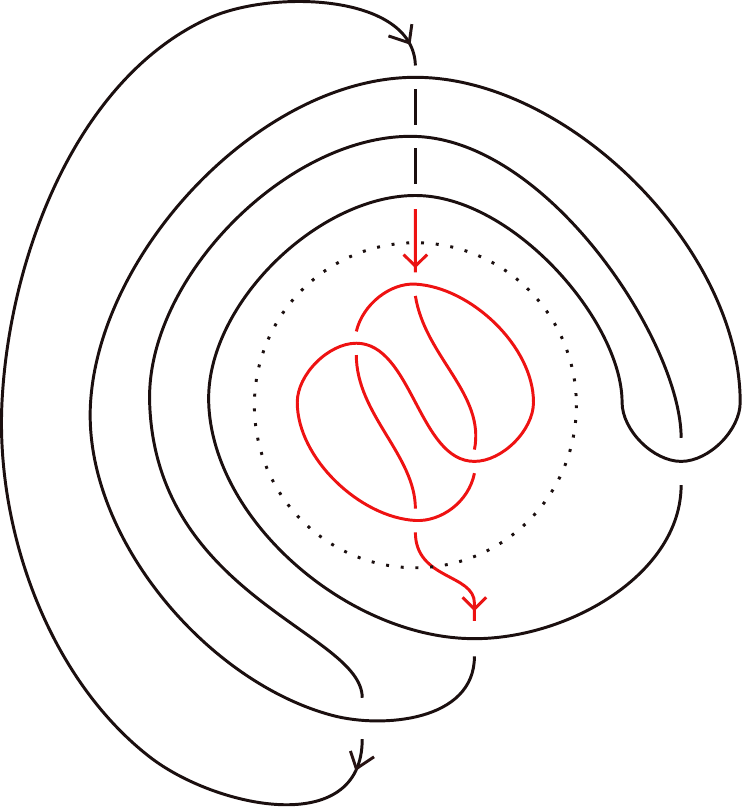}
    \caption{Example of connected sum from the elements in \cref{fig: example diamond}.}
    \label{fig:example conn sum}
    \end{figure}
    
    The connected sum of knots is commutative, therefore
    \[
        \cK(f\diamond g)=\cK(f)\#\cK(g)=\cK(g)\#\cK(f)=\cK(g\diamond f) .\qedhere
    \]
\end{proof}

Applying \cref{prop: connected sum of knots}, we obtain the following result.
\begin{theorem}
    The map $\cK$ gives a surjective homomorphism of monoids from the central monoid to knots with connected sum
    \[
        (F_3, \diamond) \twoheadrightarrow (\mathsf{Knots}, \#). 
    \]
\end{theorem}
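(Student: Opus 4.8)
The plan is to assemble the monoid homomorphism entirely from pieces already in hand, since the substantive geometric work has been done in \cref{prop: connected sum of knots}. To exhibit a homomorphism of monoids $(F_3, \diamond) \to (\mathsf{Knots}, \#)$ I need to verify three things: that $\cK$ carries the unit to the unit, that $\cK$ respects the two operations, and (for the asserted surjectivity) that $\cK$ is onto.

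First I would treat the unit condition. The identity element of $(F_3, \diamond)$ is $1_{F_3}$, whose reduced pair of trees is trivial, carrying no $4$-valent vertices. Consequently Jones' algorithm introduces no crossings, so $\cL(1_{F_3})$ is the unknot and its central component $\cK(1_{F_3})$ is again the unknot. Since the unknot is precisely the identity for connected sum, $\cK$ sends the unit of the central monoid to the unit of $(\mathsf{Knots}, \#)$.

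The multiplicativity is then immediate: it is exactly the statement of \cref{prop: connected sum of knots}, which gives $\cK(f \diamond g) = \cK(f) \# \cK(g)$ for all $f, g \in F_3$. Combined with the unit computation above, this shows $\cK$ is a monoid homomorphism. For surjectivity I would simply recall the observation made just after the definition of $\cK$: given any knot $K$, the Alexander type theorem \cref{thm:Alexander type thm} produces $f \in F_3$ with $\cL(f) = K$, and when this link is connected one has $\cK(f) = \cL(f) = K$, so every knot lies in the image.

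There is no genuine obstacle remaining at this stage, as all of the topological content is packaged inside \cref{prop: connected sum of knots}; the only thing to pin down carefully is that $\cK(1_{F_3})$ really is the unknot, i.e.\ that the empty diagram closes to the unknot under Jones' construction, which is clear from the tree description of $1_{F_3}$.
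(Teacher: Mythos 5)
Your proposal is correct and follows essentially the same route as the paper: the paper derives this theorem by citing \cref{prop: connected sum of knots} for multiplicativity, having already noted right after the definition of $\cK$ that surjectivity follows from \cref{thm:Alexander type thm} because $\cK(f)=\cL(f)$ whenever $\cL(f)$ is connected. Your explicit check that $\cK(1_{F_3})$ is the unknot is left implicit in the paper but is the same trivial observation.
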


For $f,g \in F_3$ their image under $\cL$ could result in links with more than one component, for which the connected sum is not well-defined. Consider the monoid of pointed links $(\mathsf{Links_*}, \#_*)$, with operation given by connected sum on the distinguished components.  With this notion, we can extend the above result by taking the connected sum of Thompson links to be the connected sum as pointed links.
\begin{corollary}
\label{cor: connected sum of links}
    For all $f,g\in F_3$ we have 
    \[
        \cL_*(f\diamond g)=\cL_*(f)\#_*\cL_*(g).
    \]
    In particular
    \[
        \cL_*(f\diamond g)=\cL_*(g\diamond f) .
    \]
\end{corollary}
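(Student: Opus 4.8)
The plan is to build on \cref{prop: connected sum of knots}, which already handles the marked component, and then account for the remaining components by exploiting the locality of the $\diamond$ operation. First I would recall that, by construction, the central component of $\cL_*(f\diamond g)$ is $\cK(f\diamond g)$, and \cref{prop: connected sum of knots} identifies this with $\cK(f)\#\cK(g)$, the connected sum of the two marked components; the orientation convention making this connected sum well-defined is exactly the one verified in the proof of \cref{prop: connected sum of knots}. This pins down the marked component of the left-hand side.

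Next I would analyse the unmarked components. The key observation is that $f\diamond g$ is obtained by attaching the pair of trees of $g$ at the central leaf $l(f)$, so that the link diagram for $\cL(f\diamond g)$ agrees with that of $\cL(f)$ everywhere except in the local region around the image of $l(f)$, where it is replaced by the diagram of $\cL(g)$. Since the central component of $\cL(f)$ is precisely the one passing through $l(f)$, every other component of $\cL(f)$ lies entirely in the unchanged part of the diagram and therefore appears unaltered in $\cL(f\diamond g)$; likewise the non-central components of $\cL(g)$ appear unaltered inside the attached region. Hence the unmarked components of $\cL_*(f\diamond g)$ are exactly the disjoint union of the unmarked components of $\cL_*(f)$ and those of $\cL_*(g)$.

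Combining the two observations with the definition of $\#_*$---connected sum performed on the distinguished components and disjoint union on the remaining ones---yields $\cL_*(f\diamond g)=\cL_*(f)\#_*\cL_*(g)$. For the final equality I would invoke commutativity: connected sum of knots is commutative and disjoint union is symmetric, so $\#_*$ is a commutative operation on $\mathsf{Links}_*$, whence $\cL_*(f)\#_*\cL_*(g)=\cL_*(g)\#_*\cL_*(f)$ and therefore $\cL_*(f\diamond g)=\cL_*(g\diamond f)$.

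The main obstacle I anticipate is making the locality claim precise: one must check that the attaching at $l(f)$ introduces no new linking between the unmarked components of the two factors and leaves their isotopy classes, and relative positions, intact. This is geometrically transparent from the diagrams, since the modification is confined to a disc around the central leaf while the unmarked components avoid that disc, but it is precisely the step that upgrades the knot-level statement of \cref{prop: connected sum of knots} to the full pointed-link statement.
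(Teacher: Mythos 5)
Your proposal is correct and takes essentially the same route as the paper: the paper states this as an immediate corollary of \cref{prop: connected sum of knots}, with the locality of $-\diamond-$ (the tree pair of $g$ being inserted at the central leaf $l(f)$, so that the diagram of $\cL(g)$ sits in a disc meeting the rest of the diagram in two strands of the central component) implicitly supplying the claim about the unmarked components, exactly as you spell out. Your closing appeal to commutativity of $\#_*$ for the second equality likewise mirrors the paper's commutativity argument, so there is nothing to correct.
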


\begin{remark}
    In order to consider connected sum we had to orient the central component. Hence, when looking at monoids with connected sum, we are working with oriented knots or pointed links with oriented marked component. The canonical choice of orientation made above ensures that the connected sum on the central leaf does not require us to invert orientations when taking the image of $-\diamond-$ under~$\cL_*$.
\end{remark}

Denote by $U\colon (\mathsf{Links_*}, \#_*)\to (\mathsf{Knots}, \#)$ the morphism obtained by sending each pointed link to its marked component. Then, applying the above results, we have the following theorem.

\begin{restate}{Theorem}{thm:monoid diagram}
    The following is a commutative diagram of surjective monoid homomorphisms.
    \begin{center}
    \begin{tikzcd}
    (F_3, \diamond) \arrow[rr, "\cL_*", two heads] \arrow[rrdd, "\cK"', two heads] &  & (\mathsf{Links_*}, \#_*) \arrow[dd, "U", two heads] \\
                                     &  &                   \\
                                     &  & (\mathsf{Knots}, \#)                
    \end{tikzcd}    
    \end{center}
\end{restate}

\subsection{Prime decomposition}
Given any knot $K$, we can now describe, up to prime components, how to obtain an element $f$ of $F_3$ in a standard form such that $\cL(f)=K$.

Recall that a knot is said to be prime if it cannot be expressed as the connected sum of two non-trivial knots. Any knot $K$ has a prime decomposition $K=P_1\#\dots\#P_n$, i.e.~it can be written as the connected sum of prime knots $P_1, \dots, P_n$, unique up to permutation \cite[Theorem 2.12]{Lickorish97}.

\begin{theorem}
\label{thm:prime decomposition}
    Let $K$ be a knot with prime decomposition $K=P_1\#\dots\#P_n$. There exists an element $f\in F_3$ of the form 
    \[
    f=f_1\diamond\dots\diamond f_n
    \]
    such that $\cL(f)=K$ and $\cL(f_i)=P_i$ for all $i=1, \dots, n$.
\end{theorem}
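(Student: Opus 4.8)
The plan is to combine the Alexander-type \cref{thm:Alexander type thm} (applied to each prime factor) with \cref{prop: connected sum of knots}, which identifies $-\diamond-$ with connected sum on the central component. These two ingredients reduce the theorem to an essentially immediate assembly, so the proof will be short.

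First I would invoke surjectivity of $\cK$: since $\cK$ is surjective (noted after its definition), for each prime factor $P_i$ in the decomposition $K = P_1 \# \dots \# P_n$ there exists an element $f_i \in F_3$ with $\cK(f_i) = P_i$. Concretely, one applies \cref{thm:Alexander type thm} to the (connected) knot $P_i$ to obtain $f_i \in F_3$ with $\cL(f_i) = P_i$; since $P_i$ is connected this gives $\cK(f_i) = \cL(f_i) = P_i$. I would then set $f := f_1 \diamond \dots \diamond f_n$, which is well defined and unambiguous because $(F_3,\diamond)$ is associative by \cref{prop: diamond gives a monoid}.

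Next I would compute $\cK(f)$ by iterating \cref{prop: connected sum of knots}. A straightforward induction on $n$, using $\cK(g \diamond h) = \cK(g)\#\cK(h)$ at each step (and associativity of $\#$), yields
\[
    \cK(f) = \cK(f_1)\#\dots\#\cK(f_n) = P_1 \# \dots \# P_n = K.
\]
Finally, since each $P_i$ is connected we have $\cL(f_i) = \cK(f_i) = P_i$, and the central component being the whole of $K$ (as $K$ is a knot, hence connected) gives $\cL(f) = \cK(f) = K$; this is exactly the remark that $\cK(f) = \cL(f)$ when $\cL(f)$ is connected.

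I do not expect a genuine obstacle here, as the statement is really a corollary of the monoid homomorphism property already established in \cref{thm:monoid diagram}. The one point requiring a little care is the phrase ``up to prime components'': the theorem only claims \emph{existence} of such an $f$, not that $\cL(f)$ is itself a knot with no extraneous split or linked components arising from the construction of the $f_i$. In particular, the $f_i$ produced by the Alexander-type theorem may have $\cL(f_i)$ equal to $P_i$ exactly (when connected) but the bookkeeping should make clear that we are matching the \emph{central components} under $\cK$, and that connectedness of each $P_i$ forces $\cL(f_i)=P_i$. Beyond flagging this, the argument is a clean combination of surjectivity and the homomorphism property, so I would present it as a direct application rather than grinding through details.
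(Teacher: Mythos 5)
Your overall strategy (Alexander's theorem for each prime factor, then the homomorphism property of $\diamond$) matches the paper's, but there is a genuine gap at the final step. You work throughout with $\cK$, and from $\cK(f)=\cK(f_1)\#\dots\#\cK(f_n)=K$ you conclude $\cL(f)=K$ by invoking the remark that $\cK(f)=\cL(f)$ when $\cL(f)$ is connected. That remark requires knowing that $\cL(f)$ is connected, which you never establish: your parenthetical justification ``as $K$ is a knot, hence connected'' concerns $K$, not $\cL(f)$. The map $\cK$ records only the central component of $\cL_*(f)$, so $\cK(f)=K$ is perfectly compatible with $\cL(f)$ being, say, the split union of $K$ with an unknot; nothing in a computation carried out purely with $\cK$ can rule out extra components. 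The logic is inverted: connectedness of $\cL(f)$ is what you need to prove, not something you can read off from connectedness of its central component.

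The repair is exactly the paper's proof: replace $\cK$ and \cref{prop: connected sum of knots} by $\cL_*$ and \cref{cor: connected sum of links}. Since each $\cL(f_i)=P_i$ is a one-component link, each $\cL_*(f_i)$ is a pointed link with a single component, and
\[
\cL_*(f)=\cL_*(f_1)\#_*\dots\#_*\cL_*(f_n)
\]
is again a one-component pointed link, namely $K$ with itself as marked component; applying the forgetful map $U_*$ gives $\cL(f)=U_*\cL_*(f)=P_1\#\dots\#P_n=K$. The point is that $\#_*$ on pointed links keeps track of \emph{all} components (the non-central components of each factor persist under $\diamond$), which is precisely the bookkeeping that $\cK$ discards. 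Alternatively, you could argue directly that $\cL(f\diamond g)$ has one fewer component than the components of $\cL(f)$ and $\cL(g)$ combined, so that one-component factors yield a connected $\cL(f)$ — but that amounts to re-proving the content of \cref{cor: connected sum of links}.
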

\begin{proof}
    By \cref{thm:Alexander type thm}, for all knots $P_i$ there exists an element $f_i\in F_3$ such that $\cL(f_i)=P_i$. Fix $f=f_1\diamond\dots\diamond f_n$, then, by \cref{cor: connected sum of links}, we have 
    \[
        \cL(f)=U_*\cL_*(f_1\diamond\dots\diamond f_n)=P_1\#\dots\#P_n=K. \qedhere
    \]
\end{proof}

\begin{remark}
    Note that, using commutativity of connected sum, this gives $n!$ elements with the same image under $\cL$, with $n$ the number of components in the prime decomposition. Not all elements $f\in F_3$ with image $K$ will be of the form $f=f_1\diamond\dots\diamond f_n$, but \cref{thm:prime decomposition} guarantees that we will always find such a representative in $F_3$.
\end{remark}

Let $U(F_3)$ denote the set of elements of $F_3$ corresponding to the unknot.
\begin{corollary}
    A knot $K$ is prime if and only if, for all $f\in F_3$ with image $\cL(f)= K$, if $f=f_1\diamond f_2$ then at least one of $f_1, f_2$ is in $U(F_3)$.
\end{corollary}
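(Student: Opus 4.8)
The plan is to prove the biconditional directly, treating each direction separately and relying on the monoid homomorphism established in \cref{thm:monoid diagram}, together with the fact that $U$ sends $-\diamond-$ to connected sum of knots.

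For the forward direction, suppose $K$ is prime and take any $f \in F_3$ with $\cL(f) = K$ such that $f = f_1 \diamond f_2$. Applying $\cK = U \circ \cL_*$ and using \cref{prop: connected sum of knots}, we have $K = \cK(f) = \cK(f_1) \# \cK(f_2)$. Since $K$ is prime, one of the summands $\cK(f_1)$, $\cK(f_2)$ must be the unknot. Here I would be careful to note that $\cK(f_i)$ is the central component of $\cL_*(f_i)$, and to argue that the central component being the unknot forces $f_i \in U(F_3)$: indeed, $f_i \in U(F_3)$ means precisely that $\cL(f_i)$ is the unknot, which by definition of $\cK$ on a connected diagram coincides with $\cK(f_i)$. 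The small subtlety is that $\cL(f_i)$ might \emph{a priori} be a multi-component link whose central component is the unknot, so I would clarify what $U(F_3)$ is meant to capture—either restricting to the statement that $\cK(f_i)$ is trivial, or noting that for the purposes of connected sum it is the central (marked) component that matters.

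For the reverse direction, I would argue by contrapositive: suppose $K$ is not prime, so by \cref{thm:prime decomposition} its prime decomposition $K = P_1 \# \cdots \# P_n$ has $n \geq 2$ with each $P_i$ non-trivial. The theorem furnishes $f = f_1 \diamond \cdots \diamond f_n$ with $\cL(f) = K$ and $\cL(f_i) = P_i$. Grouping as $f = f_1 \diamond (f_2 \diamond \cdots \diamond f_n)$ exhibits a factorization in which, by \cref{prop: connected sum of knots}, the two factors have images $P_1$ and $P_2 \# \cdots \# P_n$ respectively, both non-trivial knots, hence neither factor lies in $U(F_3)$. This produces an $f$ with image $K$ admitting a decomposition $f = g_1 \diamond g_2$ with neither $g_i \in U(F_3)$, contradicting the hypothesis and completing the contrapositive.

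The main obstacle I anticipate is the precise interplay between $U(F_3)$ and the central component, rather than any deep argument. The cleanest route is to observe that both directions reduce entirely to the monoid homomorphism property of $\cK$ and the uniqueness of prime decomposition quoted from \cite[Theorem 2.12]{Lickorish97}; the only genuine care needed is in reconciling the set $U(F_3)$ (elements mapping to the unknot under $\cL$) with the triviality of $\cK(f_i)$, and in the reverse direction making sure the chosen factorization of $f$ genuinely has both factors outside $U(F_3)$, which follows immediately because the prime factors $P_i$ are non-trivial by definition of prime decomposition.
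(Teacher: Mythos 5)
Your overall route is the intended one: the paper states this corollary without proof, as an immediate consequence of \cref{prop: connected sum of knots} and \cref{thm:prime decomposition}, and both of your directions follow exactly that line. Your reverse (contrapositive) direction is complete as written; note only that it tacitly uses the paper's definition of prime (``cannot be expressed as the connected sum of two non-trivial knots''), under which a non-prime $K$ always admits a decomposition into $n\geq 2$ non-trivial prime factors. Under the more common convention that the unknot is not prime, that step --- and indeed the corollary itself --- would fail for $K$ the unknot, so it is worth saying explicitly which definition you are using.

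The one genuine gap is the subtlety you flag in the forward direction but leave unresolved, and neither of your two suggested patches works: ``restricting to the statement that $\cK(f_i)$ is trivial'' proves a different corollary from the one stated, and observing that the central component is what matters for connected sum says nothing about whether $\cL(f_i)$ has further components, which is precisely what membership in $U(F_3)$ is about. The fix is one line of component counting. By \cref{cor: connected sum of links}, $\cL_*(f)=\cL_*(f_1)\#_*\cL_*(f_2)$, and connected sum along the marked components of links with $m$ and $n$ components yields a link with $m+n-1$ components; since $\cL(f)=K$ has a single component, both $\cL(f_1)$ and $\cL(f_2)$ must be knots. Hence $\cK(f_i)=\cL(f_i)$ (the paper records that $\cK(f)=\cL(f)$ whenever $\cL(f)$ is connected), so ``$\cK(f_i)$ is the unknot'' is literally the statement ``$f_i\in U(F_3)$'', and the forward direction closes. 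With that insertion, your argument is a correct and complete proof of the corollary.
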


In analogy to the theory linking braids and knots, one would like to have a Markov type theorem for Thompson's group, i.e.~a list of moves such that $\cL(f)=\cL(g)$ if and only if $f$ and $g$ are related by a finite sequence of these moves. The above results give us possible moves for a Markov type theorem on $F_3$. In particular, we have
\[
    \cL(f\diamond g)=\cL(g\diamond f).
\]

\begin{figure}[H]
    \centering
        \begin{tikzpicture}
        %\draw[step=1cm,color=gray] (0,0) grid (8,3);%Uncomment this to get some helpful grid lines
        \node[anchor=south west,inner sep=0] at (0,0){\includegraphics[width=0.25\textwidth]{img/diamond_op/general_fdiamg.pdf}};
        \node at (1.6,1.8) {$g$};
        \node at (1,2.3) {$f$};
        \node at (4,1.8) {$\sim$};
        \node[anchor=south west,inner sep=0] at (5,0){\includegraphics[width=0.25\textwidth]{img/diamond_op/general_fdiamg.pdf}};
        \node at (6.6,1.8) {$f$};
        \node at (6,2.3) {$g$};
        \end{tikzpicture}
\end{figure}

And, for all $u\in F_3$ such that $\cL(u)$ is the unknot, we have
\[
    \cL(f\diamond u)=\cL(u\diamond f)=\cL(f).
\]
\begin{figure}[H]
    \centering
        \begin{tikzpicture}
    %\draw[step=1cm,color=gray] (0,0) grid (13,3);%Uncomment this to get some helpful grid lines
        \node[anchor=south west,inner sep=0] at (0,0){\includegraphics[width=0.25\textwidth]{img/diamond_op/general_fdiamg.pdf}};
        \node at (1.6,1.9) {$u$};
        \node at (0.9,2.3) {$f$};
        \node at (4.1,1.9) {$\sim$};
        \node[anchor=south west,inner sep=0] at (5,0){\includegraphics[width=0.25\textwidth]{img/diamond_op/general_fdiamg.pdf}};
        \node at (6.6,1.9) {$f$};
        \node at (5.9,2.3) {$u$};
        \node at (9.1,1.9) {$\sim$};
        \node[anchor=south west,inner sep=0] at (10,0){\includegraphics[width=0.25\textwidth]{img/diamond_op/basic.pdf}};
        \node at (11.5,1.9) {$f$};
        \end{tikzpicture}
\end{figure}

These moves are not sufficient. For example, the following two elements of $F_3$ both have image the unknot, but they are not related by the moves above. 

\begin{figure}[H]
    \centering
        \begin{tikzpicture}
    %\draw[step=1cm,color=gray] (0,0) grid (13,3);%Uncomment this to get some helpful grid lines
        \node[anchor=south west,inner sep=0] at (0,0){\includegraphics[width=0.2\textwidth]{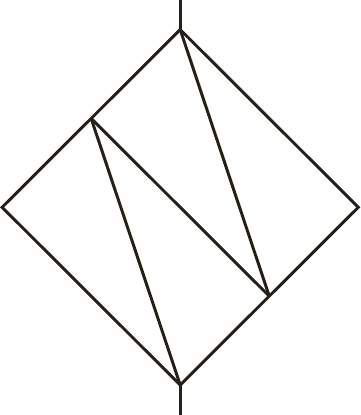}};
        \node at (3.25,1.35) {$\sim$};
        \node[anchor=south west,inner sep=0] at (4,0){\includegraphics[width=0.2\textwidth]{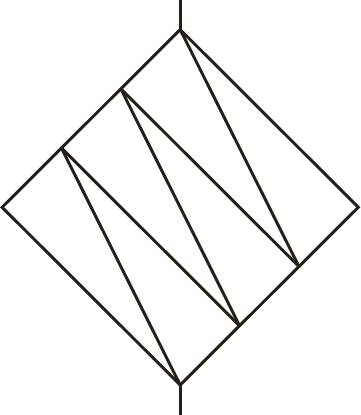}};
        \end{tikzpicture}
\end{figure}

\begin{remark}
\label{rmk:connected sum on alfa}
    As observed in \cref{subsec: other monoids}, given $f\in F_3$ and an address $\alpha\in V_+(f)$ corresponding to a leaf, we can consider the operation $f\diamond_{\alpha}-$. Similarly to $-\diamond-$, the map $f\diamond_{\alpha}-$ also corresponds to a connected sum of components. In this case, the components involved in the connected sum strongly depend on the address $\alpha$ considered. Specifically, the link $\cL(f\diamond_{\alpha}g)$ is obtained by doing connected sum between the component of $\cL(f)$ containing the image of $\alpha$ and the central component $\cK(g)$.

    Note that a Markov type theorem should incorporate when different addresses correspond to the same connected sum. This can be done via an algorithm similar to that used to define Thompson permutations by Aiello and Iovieno in \cite{aiello2022computationalstudynumberconnected}, identifying when two leaves belong to the same component of $\cL(f)$, and then checking the orientation.
\end{remark}

\section{Constructing link representatives}
\label{sec:link representatives}
We now aim to extend the standard form of \cref{sec: connected sum}. In particular, we build standard representatives in $F_3$ for certain links. To do this, we define operations on $F_3$ which correspond to operations on links, much like $-\diamond -$ and connected sum.

\subsection{Disjoint union}
The first step to pass from knots to links is to define an operation on $F_3$ corresponding to disjoint union. 

\begin{proposition}
    Given $f,g\in F_3\setminus\{1_{F_3}\}$ define
    \[
        \sqcup(f,g):=\varphi_1(f)\varphi_0(g)=\varphi_0(g)\varphi_1(f).
    \]
    Then $\cL(\sqcup(f,g))$ is the disjoint union of $\cL(f)$ and $\cL(g)$.
\end{proposition}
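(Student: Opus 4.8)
The plan is to unpack the definition of $\sqcup(f,g)=\varphi_1(f)\varphi_0(g)$ at the level of trees and track what Jones' algorithm produces. First I would observe that the two maps $\varphi_1(f)$ and $\varphi_0(g)$ act on disjoint subintervals $I_1$ and $I_0$ of $I$, so by \cref{lemma:commuting phi} (applied with the incomparable addresses $1$ and $0$) the two expressions agree, justifying the equality $\varphi_1(f)\varphi_0(g)=\varphi_0(g)\varphi_1(f)$ immediately. The substance of the proposition is the topological claim, so the bulk of the argument is diagrammatic.

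The key geometric point is to understand the pair of trees for $\sqcup(f,g)$. Writing $f$ at address $1$ and $g$ at address $0$ of a depth-one ternary caret produces a tree pair whose upper tree $T_+$ has three branches emanating from the root: the left branch carries the upper tree of $g$, the middle branch is a single leaf, and the right branch carries the upper tree of $f$ (and symmetrically for $T_-$). I would then apply the vertex-to-crossing replacement rules and close up the two roots, and identify that the resulting diagram splits into three regions. The middle leaf contributes a trivial strand that, together with the closure arc, can be removed by Reidemeister~I moves; the left and right regions reproduce exactly the Jones diagrams for $\cL(g)$ and $\cL(f)$ respectively, sitting side by side with no crossings between them. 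The hypothesis $f,g\neq 1_{F_3}$ guarantees that each of these regions genuinely carries a nonempty diagram, so the middle strand is what becomes the redundant arc one cancels.

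I would make the separation precise by noting that under Jones' algorithm a component can only pass from one subtree to another through the root crossings, and here the root caret has no crossing at the central (middle) branch — it is just a leaf — so no strand of the $f$-part ever links with a strand of the $g$-part. This is the step I expect to be the main obstacle: one must argue carefully that the closure of the roots does not accidentally merge the two sides into a single component or introduce a linking crossing, i.e.\ that the closure arc runs entirely in the middle region and can be isotoped away from both side diagrams. Once this is established, the diagram is visibly the planar disjoint juxtaposition of the diagram for $\cL(f)$ and the diagram for $\cL(g)$, which is by definition $\cL(f)\sqcup\cL(g)$.

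Concretely I would present one labelled figure showing the tree pair for $\sqcup(f,g)$, its image under the crossing-replacement rules, and the final split diagram after the Reidemeister~I reductions of the central strand, and then conclude. Throughout, the only facts invoked are \cref{lemma:commuting phi} for the well-definedness of the two-sided formula and the behaviour of $\cL$ on tree pairs as recalled in \cref{sec:preliminaries}; the rest is the visual verification that the two halves are unlinked and that the central closure arc disappears, yielding $\cL(\sqcup(f,g))=\cL(f)\sqcup\cL(g)$.
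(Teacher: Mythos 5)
Your overall strategy (unpack the tree pair for $\sqcup(f,g)$, apply Jones' algorithm, and check that the diagram splits) is the same as the paper's, which proves the proposition with exactly such a picture. But your execution contains a genuine error: you place the subtrees at the wrong addresses, and here this matters. By definition $\sqcup(f,g)=\varphi_1(f)\varphi_0(g)$ puts $g$ at the left branch (address $0$) and $f$ at the \emph{middle} branch (address $1$), with the \emph{right} branch (address $2$) the bare leaf; you instead describe a tree with $g$ on the left, a bare leaf in the middle, and $f$ on the right, i.e.\ the element $\varphi_0(g)\varphi_2(f)$. This is not a harmless relabelling. Under Jones' algorithm every internal vertex --- including the root of the new caret --- becomes a crossing whose two strands join the parent edge to the middle child and the left child to the right child. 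With $f$ in the middle slot, the closure arc continues into $f$'s tree pair (playing the role of $f$'s own closure), the left-to-right arcs at the two roots close up $g$'s tree pair through the bare right-hand strand, and the only crossings between the two resulting sublinks are the two root crossings, which have opposite type and cancel by a Reidemeister~II move; that is why the link splits as $\cL(f)\sqcup\cL(g)$. With your placement, by contrast, the left-to-right arcs at the two roots join $g$'s root edges directly to $f$'s root edges, merging the two central components (a connected-sum--type configuration), while the closure arc together with the middle leaf strand splits off as an extra unknot; the result is not $\cL(f)\sqcup\cL(g)$ in general. This is also why the paper, when counting alternative definitions, only ever allows replacing $\varphi_0$ by $\varphi_2$ --- the middle slot is never left empty.

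Two of your supporting claims fail for the same underlying reason. First, the assertion that ``the root caret has no crossing at the central (middle) branch --- it is just a leaf --- so no strand of the $f$-part ever links with a strand of the $g$-part'' is wrong: the root is an internal vertex, hence it does become a crossing, and in the correct picture each root crossing involves one strand from the $f$-side and one from the $g$-side; disjointness comes from the cancellation of these two crossings, not from their absence. Second, the cancellation is by Reidemeister~II, not Reidemeister~I: the crossings to be removed involve two distinct components, so no kink-removal applies. (Your use of \cref{lemma:commuting phi} to justify $\varphi_1(f)\varphi_0(g)=\varphi_0(g)\varphi_1(f)$ is fine, as is the appeal to $f,g\neq 1_{F_3}$, though the paper's stated reason for that hypothesis is to keep the tree pairs reduced.) To repair the proof, redraw the tree with $f$ in the middle branch and rerun your diagrammatic argument with the crossing rule applied at the roots; that recovers exactly the paper's one-figure proof.
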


\begin{proof}
    Consider $f,g\in F_3$, then $\sqcup(f,g)$ corresponds to the pair of trees represented below and, applying Jones' algorithm, we obtain the link on the right, i.e.~the disjoint union.
    \begin{figure}[H]
        \centering
        \begin{tikzpicture}
        %\draw[step=1cm,color=gray] (0,0) grid (10,4);%Uncomment this to get some helpful grid lines
        \node[anchor=south west,inner sep=0] at (0,0.5){\includegraphics[width=0.35\linewidth]{img/diamond_op/diamond_ij.pdf}};
        \node at (0.6,3) {$g$};
        \node at (2.5,3) {$f$};
        \node at (5.5,3) {$\longrightarrow$};
        \node at (5.5,3.4) {$\cL$};
        \node[anchor=south west,inner sep=0] at (7,0){\includegraphics[width=0.35\linewidth]{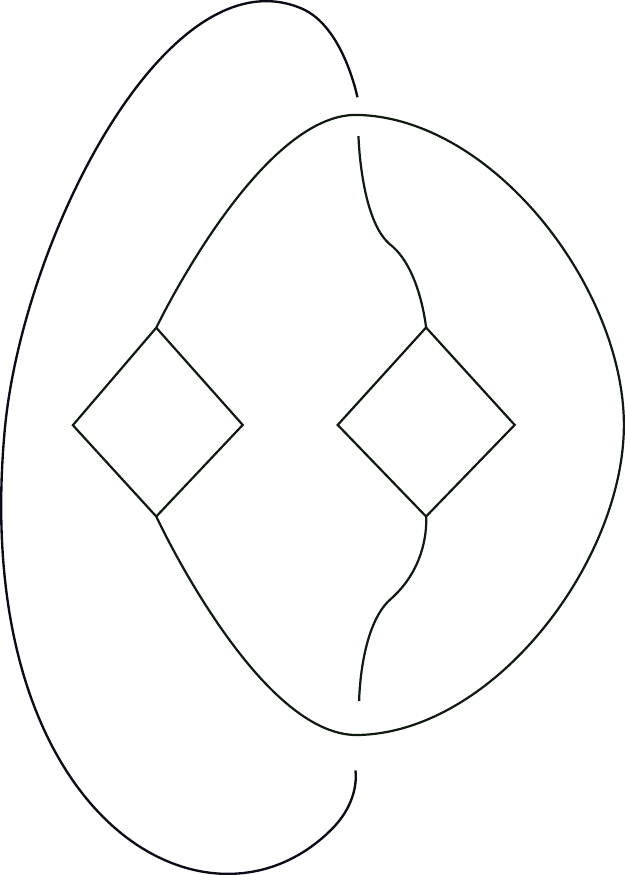}};
        \node at (8.1,3.2) {$\cL(g)$};
        \node at (10.05,3.2) {$\cL(f)$};
        \end{tikzpicture}
        \label{fig:disjoint union}
        \qedhere
    \end{figure}
\end{proof}

Note that we are taking $f,g\neq 1_{F_3}$ to avoid possible problems related to the fact that we are considering reduced pairs of trees. Hence, to obtain the unlink, one should take representatives in $U(F_3)\setminus\{1_{F_3}\}$, for example $y_0$.

The operation $\sqcup(-,-)$ behaves well with respect to the product in~$F_3$.
\begin{lemma}
    The map $\sqcup\colon F_3\times F_3 \to F_3$ satisfies
   \[
        \sqcup(f\cdot f', g\cdot g')=\sqcup(f,g)\cdot\sqcup(f',g').
   \] 
\end{lemma}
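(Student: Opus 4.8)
The goal is to prove that $\sqcup(f \cdot f', g \cdot g') = \sqcup(f, g) \cdot \sqcup(f', g')$ for all $f, f', g, g' \in F_3$. The plan is to reduce everything to the homomorphism property of the maps $\varphi_0$ and $\varphi_1$, which is already available by \cref{lemma: phi alpha is group hom}, together with the commutation relation supplied by \cref{lemma:commuting phi}.

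First I would expand the left-hand side using the definition $\sqcup(f \cdot f', g \cdot g') = \varphi_1(f \cdot f') \cdot \varphi_0(g \cdot g')$. Since $\varphi_1$ and $\varphi_0$ are group homomorphisms by \cref{lemma: phi alpha is group hom}, this equals $\varphi_1(f)\varphi_1(f') \cdot \varphi_0(g)\varphi_0(g')$. The right-hand side, again by definition, is $\varphi_1(f)\varphi_0(g) \cdot \varphi_1(f')\varphi_0(g')$. Comparing the two expressions, the identity I must establish reduces to showing that the inner factors can be reordered, namely that $\varphi_0(g) \cdot \varphi_1(f') = \varphi_1(f') \cdot \varphi_0(g)$.

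This reordering is exactly the content of \cref{lemma:commuting phi}: the addresses $0$ and $1$ satisfy $0 \npreceq 1$ and $1 \npreceq 0$, so the intervals $I_0$ and $I_1$ are disjoint, and hence $\varphi_0(g)$ and $\varphi_1(f')$ commute in $(F_3, \cdot)$. Substituting this commutation into the expanded left-hand side gives
\[
    \varphi_1(f)\varphi_1(f')\varphi_0(g)\varphi_0(g') = \varphi_1(f)\varphi_0(g)\varphi_1(f')\varphi_0(g') = \sqcup(f,g)\cdot\sqcup(f',g'),
\]
which is the desired equality.

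There is no serious obstacle here; the proof is essentially a bookkeeping exercise combining two previously established lemmas. The only point requiring minor care is confirming that the definition of $\sqcup$ extends consistently to the relevant products, i.e.\ that $f \cdot f'$, $g \cdot g'$, and the factors $f, f', g, g'$ all remain in the domain where $\sqcup$ is defined (away from $1_{F_3}$ where the reduced-tree subtlety arises); assuming the factors are nontrivial, the argument goes through verbatim.
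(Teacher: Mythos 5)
Your proof is correct and follows essentially the same route as the paper: expand both sides via the definition $\sqcup(f,g)=\varphi_1(f)\varphi_0(g)$, apply the homomorphism property of $\varphi_\alpha$ (\cref{lemma: phi alpha is group hom}), and commute $\varphi_0(g)$ past $\varphi_1(f')$ using \cref{lemma:commuting phi}. The only differences are cosmetic: you make explicit the citation of the homomorphism lemma and the incomparability of the addresses $0$ and $1$, both of which the paper leaves implicit.
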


\begin{proof}
    Consider $f,f',g,g'\in F_3$. By \cref{lemma:commuting phi}, the images of the maps~$\varphi_0$ and $\varphi_1$ commute, so we have
    \begin{align*}
        \sqcup(f\cdot f', g\cdot g')
        &= \varphi_1(ff')\varphi_0(gg')=\\
        &= \varphi_1(f)\varphi_1(f')\varphi_0(g)\varphi_0(g')=\\
        &= \varphi_1(f)\varphi_0(g)\varphi_1(f')\varphi_0(g')=\\
        &= \sqcup(f,g)\cdot\sqcup(f',g').
        \qedhere
    \end{align*}
\end{proof}

Note that
\[
    \cL(\sqcup(f,g))=\cL(\sqcup(g,f)),
\]
and one could substitute $\varphi_0$ with $\varphi_2$ in the definition of $\sqcup$, obtaining a total of $4$ elements in $F_3$ with image the disjoint union of $\cL(f)$ and~$\cL(g)$. In general, given $n$ links $L_1, \dots, L_n$, and $f_1, \dots, f_n$ representatives in $F_3\setminus\{1_{F_3}\}$, we associate to $L_1\sqcup\dots\sqcup L_n$ the element
\[
    \sqcup(f_1,\dots,f_n):=\sqcup(\sqcup(\dots(\sqcup(f_1,f_2)\dots),f_n).
\]

\begin{remark}
    Permuting the terms in the above operation, and using both $\varphi_0$ and $\varphi_2$, gives $c(n)$ elements with image $L_1\sqcup\dots\sqcup L_n$, where~$c(1)=1$ and $c(n)$ is given by the recursive formula

    \[
    c(n)=
    \begin{cases}
        4\sum_{i=1}^{n/2 -1} \binom{n}{i}c(i)c(n-i)+2\binom{n}{n/2}c(n/2)^2 & n \text{ even,} \\
        4\sum_{i=1}^{\lfloor n/2 \rfloor} \binom{n}{i}c(i)c(n-i) & n \text{ odd.} \\
    \end{cases} 
    \]

    This can be rewritten using Catalan numbers $\mathrm{Cat(n)}$ as 
    \[
        c(n)=2^{n-1}n!\mathrm{Cat}(n-1).
    \]
\end{remark}

\subsection{Linking components}
The aim of this section is to define local moves in $F_3$ that correspond to linking the central components of two given Thompson links, and use them to construct representatives. To do so we consider the elements $H^{\pm}$ in $F_3$ corresponding to the pairs of trees in \cref{fig:Hpm in F3}.

\begin{figure}[H]
    \centering
    \begin{tikzpicture}
    %\draw[step=1cm,color=gray] (0,0) grid (6,6);%Uncomment this to get some helpful grid lines
        \node[anchor=south west,inner sep=0] at (0,0){\includegraphics[width=0.25\textwidth]{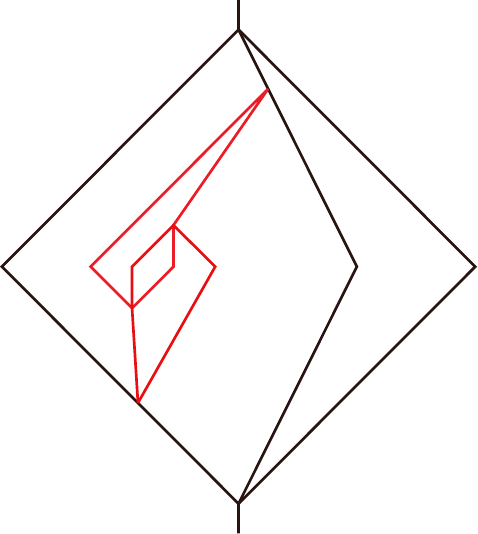}};
        \node[anchor=south west,inner sep=0] at (5,0){\includegraphics[width=0.25\textwidth]{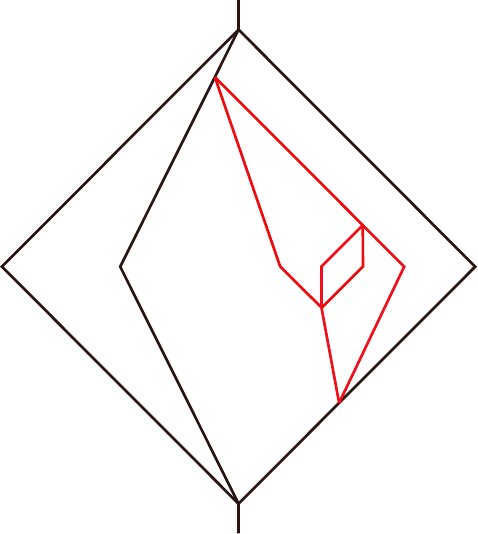}};
    \end{tikzpicture}
    \caption{The elements $H^+$ (left) and $H^-$ (right).}
    \label{fig:Hpm in F3}
\end{figure}

Orient the links obtained so that the central components respect the convention of \cref{sec: connected sum} and the images $\cL(H^{\pm})$ are, respectively, the positive and negative Hopf link.

Define the \textit{linking moves} $l_{\pm}\colon F_3\times F_3\to F_3$ as 
\[
    l_{+}(f,g):=(H^{+}\diamond_{12}f)\diamond_0 g
\]
and
\[
    l_{-}(f,g):=(H^{-}\diamond_{10}f)\diamond_0 g.
\]
Note that $H^{\pm}=l_{\pm}(1_{F_3},1_{F_3})$.

\begin{figure}[H]
    \centering
    \begin{tikzpicture}
    %\draw[step=1cm,color=gray] (0,0) grid (6,6);%Uncomment this to get some helpful grid lines
        \node[anchor=south west,inner sep=0] at (0,0){\includegraphics[width=0.33\textwidth]{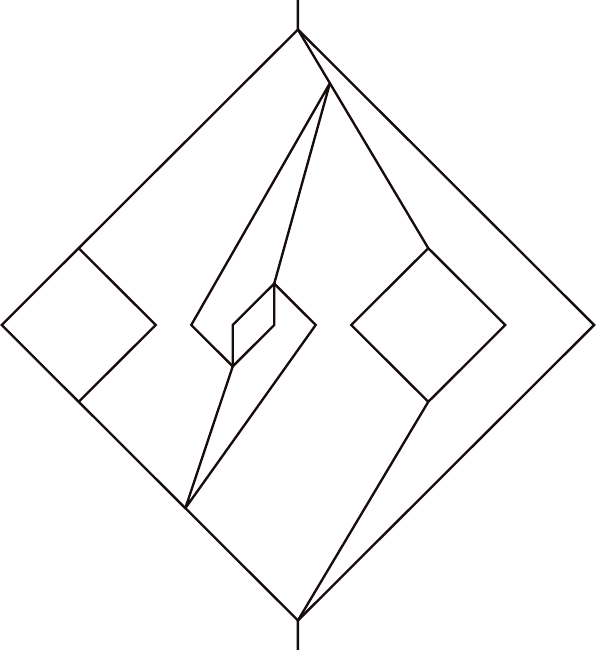}};
        \node at (0.58,2.3) {$g$};
        \node at (3,2.3) {$f$};
    \end{tikzpicture}
    \caption{The element $l_+(f,g)$.}
    \label{fig:bridge move 1}
\end{figure}

The orientation fixed on $\cL(H^{\pm})$ is such that the central components of $f$ and $g$ respect the convention of \cref{sec: connected sum}.

Starting from $H^1=H^+$, define $H^n\in F_3$ as the element obtained from $H^{n-1}$ by adding vertices in place of the two leaves of the upper tree with addresses $11(2)^{n-2}0$ and $11(2)^{n-1}$, and the tree shown below at the leaf $0(2)^{n-1}$ of the lower tree.
\[
    \includegraphics[width=0.15\linewidth]{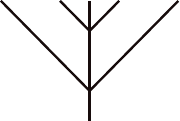}
\]

\begin{example}
For $n=2$ and $n=3$ we have 
\[
\begin{tikzpicture}
    %\draw[step=1cm,color=gray] (0,0) grid (6,6);%Uncomment this to get some helpful grid lines
        \node[anchor=south west,inner sep=0] at (1,0){\includegraphics[width=0.25\textwidth]{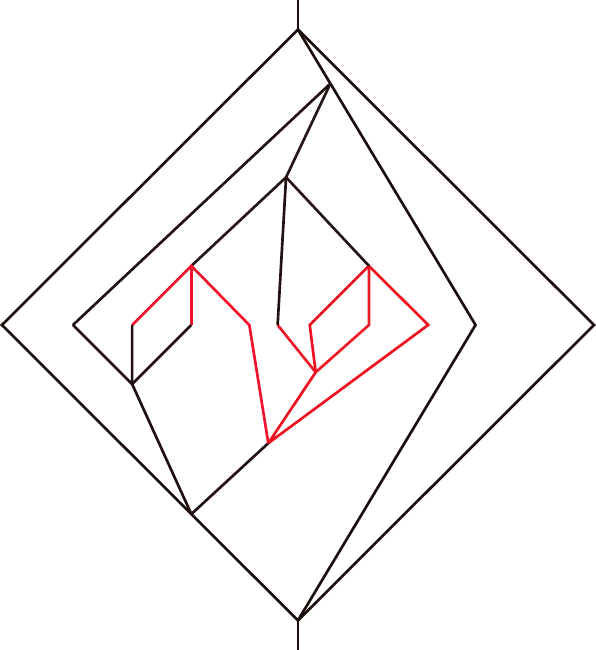}};
        \node at (0,1.7) {$H^2=$};
        \node at (5,1.7) {and};
        \node at (6,1.7) {$H^3=$};
        \node[anchor=south west,inner sep=0] at (7,0){\includegraphics[width=0.25\textwidth]{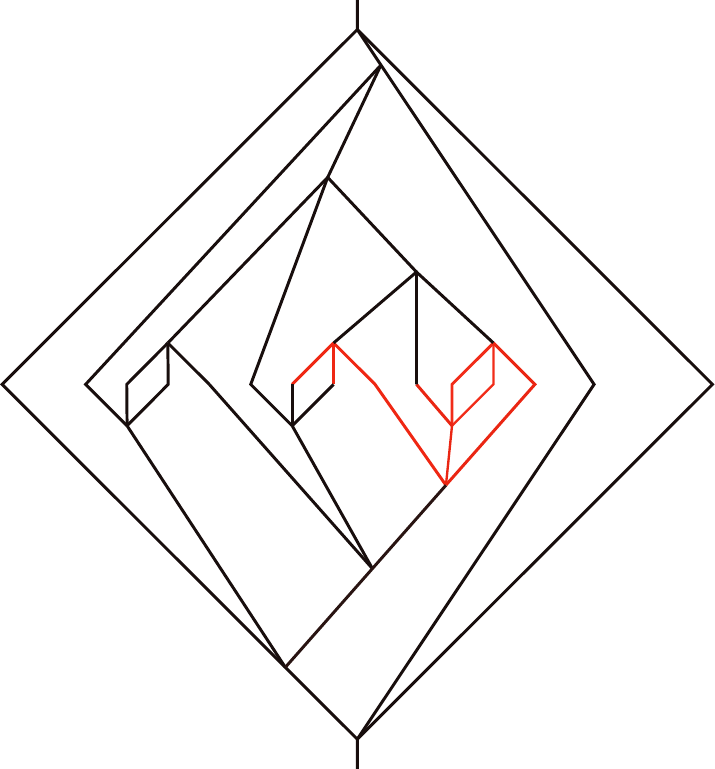}};
        \node at (10.5,1.7) {.};
\end{tikzpicture} 
\]
\end{example}

Similarly, define $H^{-n}$ recursively from $H^{-1}=H^-$. The element $H^{-n}$ is obtained from $H^{-(n-1)}$ by adding vertices in place of the leaves $2(0)^{n}$ and $2(0)^{n-1}2$ in the lower tree, and the tree
\[
    \includegraphics[width=0.15\linewidth]{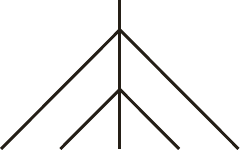}
\]
at the address $11(0)^{n-2}$ of the upper tree.
\[
\begin{tikzpicture}
    %\draw[step=1cm,color=gray] (0,0) grid (6,6);%Uncomment this to get some helpful grid lines
        \node[anchor=south west,inner sep=0] at (1,0){\includegraphics[width=0.25\textwidth]{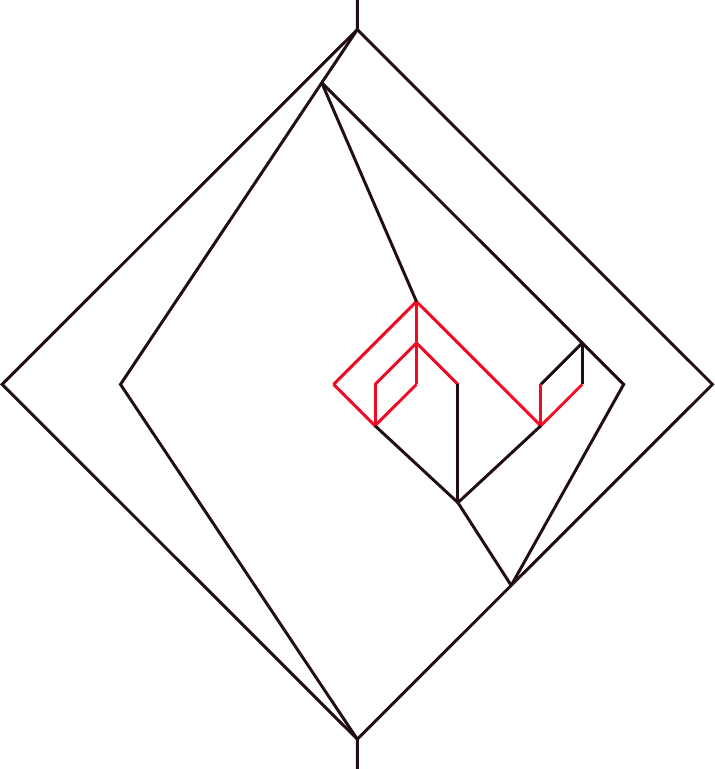}};
        \node at (0,1.7) {$H^{-2}=$};
        \node at (5,1.7) {and};
        \node at (6.1,1.7) {$H^{-3}=$};
        \node[anchor=south west,inner sep=0] at (7,0){\includegraphics[width=0.25\textwidth]{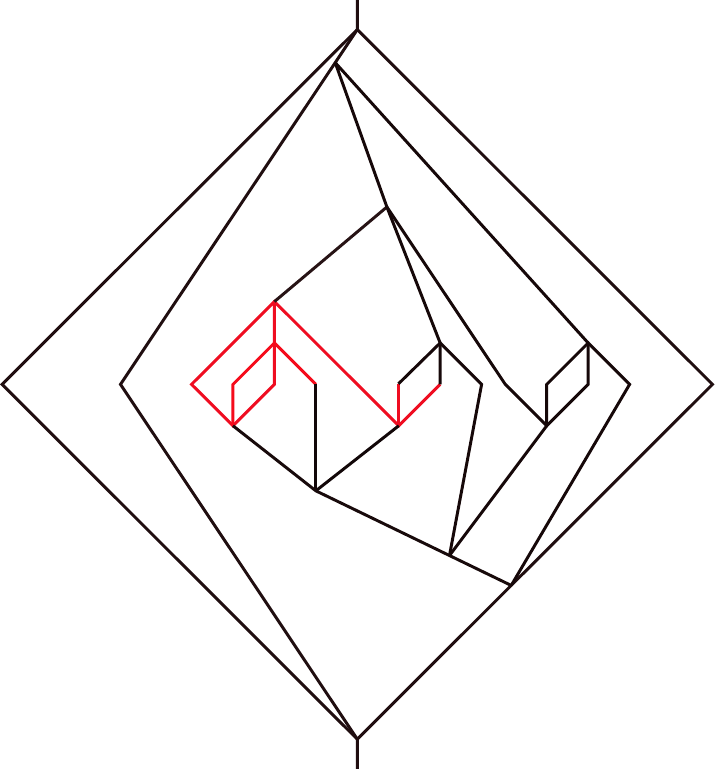}};
        \node at (10.5,1.7) {.};
\end{tikzpicture}
\]

We can now define a generalised version of the linking moves.
\begin{definition}
    For any $n\in\N$ define the maps $l_{\pm n}\colon F_3\times F_3 \to F_3$ by
    \[
        l_n(f,g):=(H^n\diamond_{12} f) \diamond_0 g \;\;\text{ and }\;\;
        l_{-n}(f,g):=(H^{-n}\diamond_{10} f) \diamond_0 g.
    \]
\end{definition}

The links $\cL (l_{*}(f,g))$ and $\cL (l_{-*}(f,g))$ have local diagrams as in \cref{fig:positive bridge link} and \cref{fig:negative bridge link} respectively, linking the central components $\cK(f)$ and~$\cK(g)$ of $\cL(f)$ and $\cL(g)$.

\begin{figure}[H]
        \centering
        \begin{tikzpicture}
        %\draw[step=1cm,color=gray] (0,0) grid (6,6);%Uncomment this to get some helpful grid lines
        \node[anchor=south west,inner sep=0] at (0,0){\includegraphics[width=0.5\textwidth]{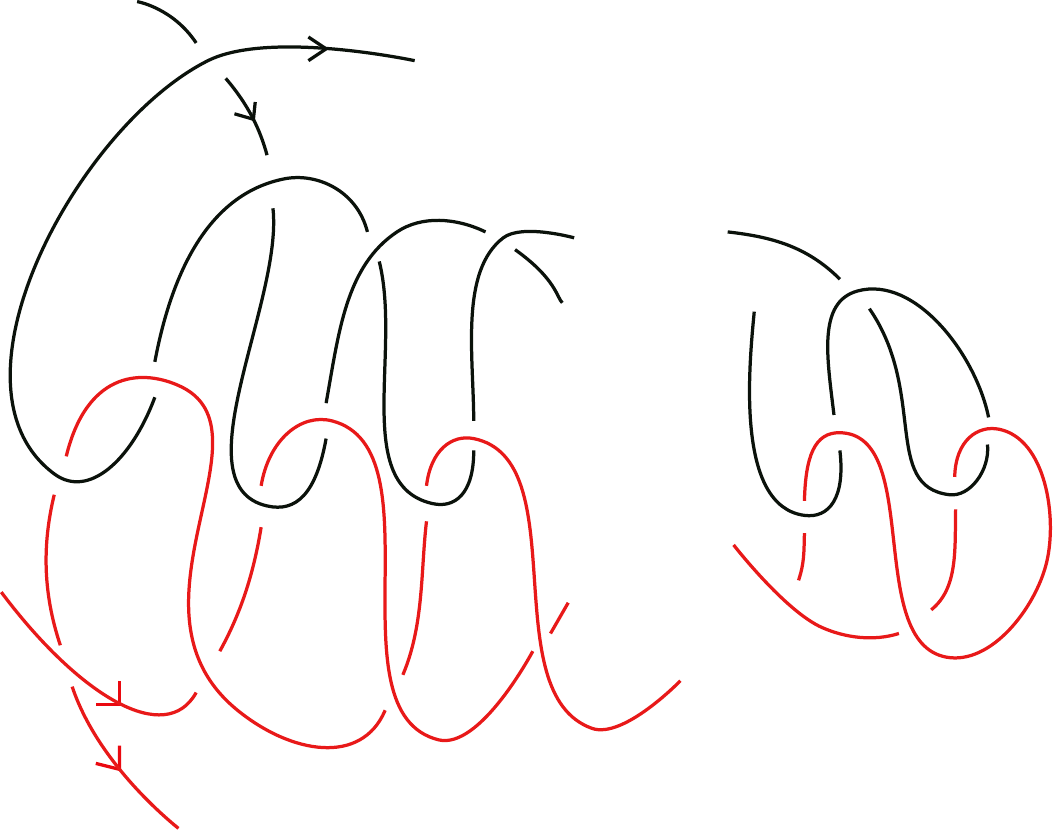}};
        \node at (4,1.5) {$\cdots$};
        \node at (4,3.5) {$\cdots$};
        \node at (0.8,5.3) {$\cK(f)$};
        \node at (5,0.6) {$\cK(g)$};
        \end{tikzpicture}
        \caption{The link $\cL(l_{*}(f,g))$ locally.}
        \label{fig:positive bridge link}
\end{figure}

\begin{figure}[H]
        \centering
        \begin{tikzpicture}
        %\draw[step=1cm,color=gray] (0,0) grid (6,6);%Uncomment this to get some helpful grid lines
        \node[anchor=south west,inner sep=0] at (0,0){\includegraphics[width=0.5\textwidth]{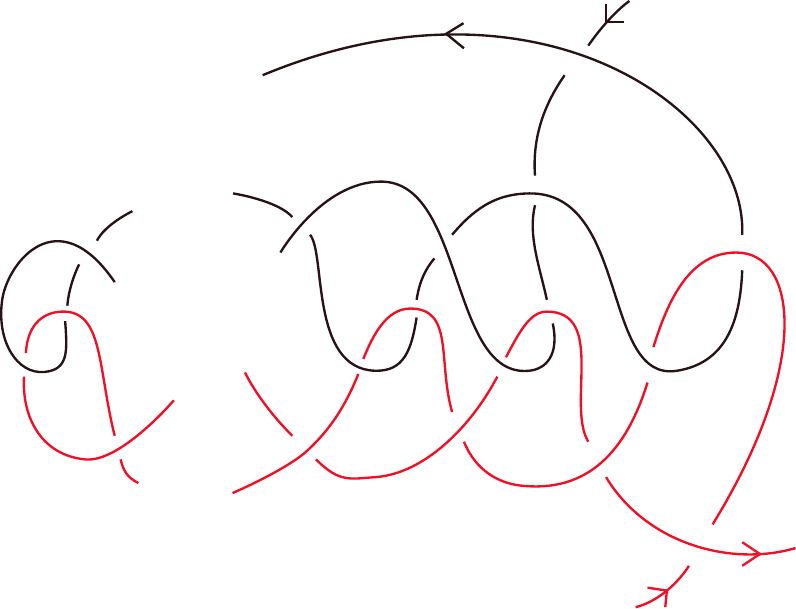}};
        \node at (1.5,3.25) {$\cdots$};
        \node at (1.55,0.95) {$\cdots$};
        \node at (4,5) {$\cK(f)$};
        \node at (4,0.2) {$\cK(g)$};
        \end{tikzpicture}
        \caption{The link $\cL(l_{-*}(f,g))$ locally.}
        \label{fig:negative bridge link}
\end{figure}

\begin{proposition}
    Given $f,g\in F_3$, and $n\in\N$, the link $\cL(l_n(f,g))$ is such that 
    \[
        \lk(\cK(f), \cK(g))=n.
    \]
    In particular, we get the following local diagram.
    
    \begin{figure}[H]
    \centering
    \begin{tikzpicture}
    %\draw[step=1cm,color=gray] (0,0) grid (6,6);%Uncomment this to get some helpful grid lines
        \node[anchor=south west,inner sep=0] at (0,0){\includegraphics[width=0.45\textwidth]{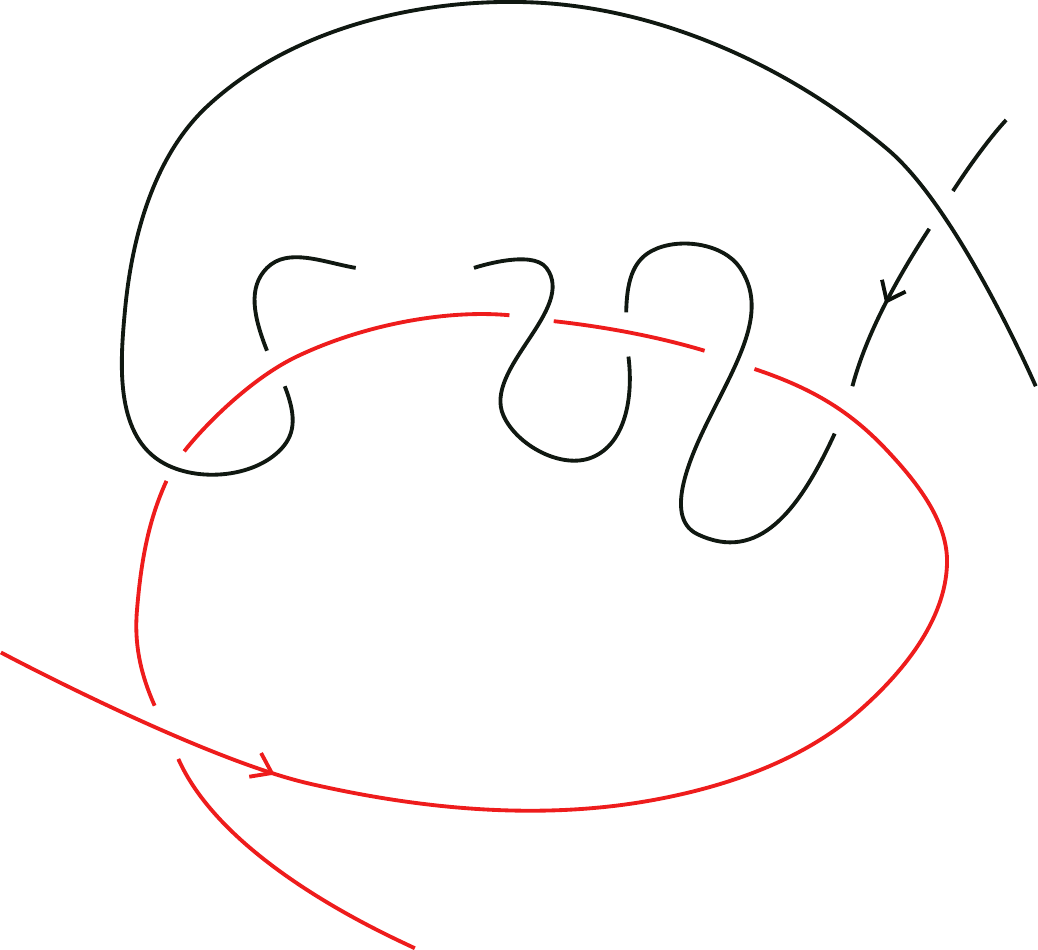}};
        \node at (2.35,3.8) {$\cdots$};
        \node at (0.9,5.3) {$\cK(f)$};
        \node at (5,0.7) {$\cK(g)$};
    \end{tikzpicture}
    \label{fig:linking two components general}
    \end{figure}
    
    Similarly, for any $n\in\N$, the link $\cL(l_{-n}(f,g))$ has
    \[
        \lk(\cK(f), \cK(g))=-n,
    \]
    and an analogous local diagram.
\end{proposition}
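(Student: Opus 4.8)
The plan is to isolate the crossings between the two distinguished components, reduce to the model element $H^n$, and then compute by induction on $n$.

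First I would use that the operations $\diamond_{12}$ and $\diamond_0$ appearing in $l_n(f,g)=(H^n\diamond_{12}f)\diamond_0 g$ are instances of $\diamond_\alpha$, so by \cref{rmk:connected sum on alfa} they realise connected sums of $\cK(f)$ and $\cK(g)$ with two distinct components $C_1$ and $C_2$ of $\cL(H^n)$. A connected sum is supported in a ball meeting a single component, so inserting the pieces for $f$ and $g$ creates no crossing between the two distinguished components: every inter-component crossing of $\cL(l_n(f,g))$ already appears in the clasp region coming from $H^n$. Since the linking number depends only on the signed count of such crossings, this yields $\lk(\cK(f),\cK(g))=\lk(C_1,C_2)$ and reduces the statement to the fixed link $\cL(H^n)$.

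Next I would compute $\lk(C_1,C_2)$ by induction on $n$, with the orientations on both strands fixed by the convention of \cref{sec: connected sum}. The base case $n=1$ holds because $H^1=H^+$ yields the positive Hopf link, whose two components link with number $+1$. For the inductive step I would resolve, via Jones' algorithm, the $4$-valent vertices introduced in passing from $H^{n-1}$ to $H^n$ — namely those added at the leaves $11(2)^{n-2}0$ and $11(2)^{n-1}$ of the upper tree together with the prescribed caret at $0(2)^{n-1}$ of the lower tree — and read off the resulting crossings. The claim I would verify is that these new crossings are all inter-component, all positive, and together contribute exactly $+1$ to the linking number; granting this, $\lk(C_1,C_2)=n$ follows by induction. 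The local picture that results is the one shown in \cref{fig:positive bridge link}, and restricting to the two central strands gives the stated local diagram. The negative case is symmetric: $H^{-1}=H^-$ is the negative Hopf link with linking number $-1$, and the recursive modifications defining $H^{-n}$ (the vertices at $2(0)^n$ and $2(0)^{n-1}2$ of the lower tree, with the flipped caret at $11(0)^{n-2}$ of the upper tree) add at each stage inter-component crossings of negative sign contributing $-1$, giving $\lk(C_1,C_2)=-n$ with local diagram as in \cref{fig:negative bridge link}.

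I expect the main obstacle to be the inductive step: one must track the two orientations through the recursion and check, by direct local inspection of the diagram after resolving the new carets and closing up, that every crossing created at stage $n$ joins $C_1$ to $C_2$ — so that no self-crossing of either component pollutes the count — and carries the correct sign. Establishing that these crossings assemble into exactly one additional positive clasp, so that the signed inter-component count grows by precisely $2$ and the linking number by $1$, is the crux of the argument.
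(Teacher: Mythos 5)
Your proposal follows essentially the same route as the paper: the paper's proof is an induction on $n$, with base case the Hopf link read off from Jones' algorithm and inductive step showing, by Reidemeister moves on the local diagram, that passing from stage $n$ to stage $n+1$ adds exactly one positive clasp between the two distinguished components. Your preliminary reduction to the model element $H^n$ via locality of connected sum is done implicitly in the paper, which simply treats $\cK(f)$ and $\cK(g)$ as black-boxed strands in its local diagrams; making that step explicit through \cref{rmk:connected sum on alfa} is a reasonable, slightly cleaner framing, but it does not change the substance of the induction.

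One intermediate claim in your sketch cannot hold as literally stated. Passing from $H^{n-1}$ to $H^n$ adds two vertices to the upper tree and, to keep the leaf counts of the two trees equal, two vertices to the lower tree, hence \emph{four} new crossings in the diagram; if all four were crossings between the two distinguished components and all positive, the linking number would increase by $2$ per stage, not $1$. What actually happens, and what the paper's pictures exhibit, is that the new crossings contribute a \emph{net} of $+2$ to the signed inter-component crossing count, the surplus being removed by Reidemeister II cancellations (this is visible already in the paper's $n=2$ base case, which needs two simplification steps). Your closing formulation --- that the signed inter-component count grows by precisely $2$, i.e.\ one additional positive clasp survives --- is the correct statement of the crux, so the argument goes through provided that, rather than ``all new crossings are inter-component and positive,'' is what one verifies in the inductive step.
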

\begin{proof}
    We proceed by induction on $n$. We include the first three base cases for clarity.
    \begin{itemize}
        \item For $n=1$, applying Jones' algorithm, one sees that the local diagram for $\cL(l_+(f,g))$ is given by
        \begin{figure}[H]
        \centering
        \begin{tikzpicture}
        %\draw[step=1cm,color=gray] (0,0) grid (13,3);%Uncomment this to get some helpful grid lines
        \node[anchor=south west,inner sep=0] at (0,0){\includegraphics[width=0.2\linewidth]{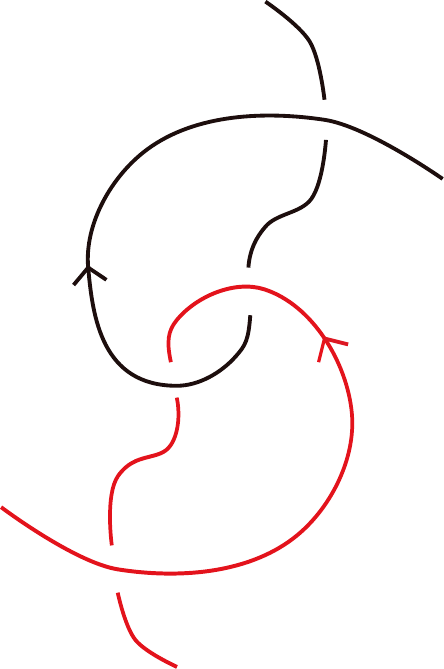}};
        \node at (0,3.2) {$\cK(f)$};
        \node at (2.7,1) {$\cK(g)$};
        \node at (3.2,2) {.};
        \end{tikzpicture}
        \end{figure}
        
        \item For $n=2$ the link $\cL(l_2(f,g))$ has local diagram as follows, where~$\sim$ denotes isotopy.

        \begin{figure}[H]
        \centering
        \begin{tikzpicture}
        %\draw[step=1cm,color=gray] (0,0) grid (13,3);%Uncomment this to get some helpful grid lines
        \node[anchor=south west,inner sep=0] at (0,0){\includegraphics[width=0.2\linewidth]{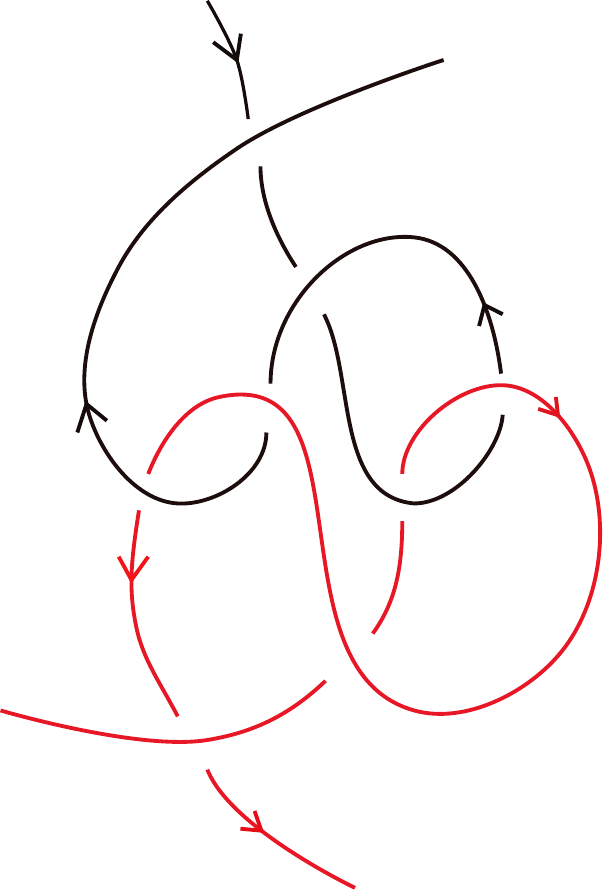}};
        \node at (3.5,2.2) {$\sim$};
        \node[anchor=south west,inner sep=0] at (4.5,0){\includegraphics[width=0.2\linewidth]{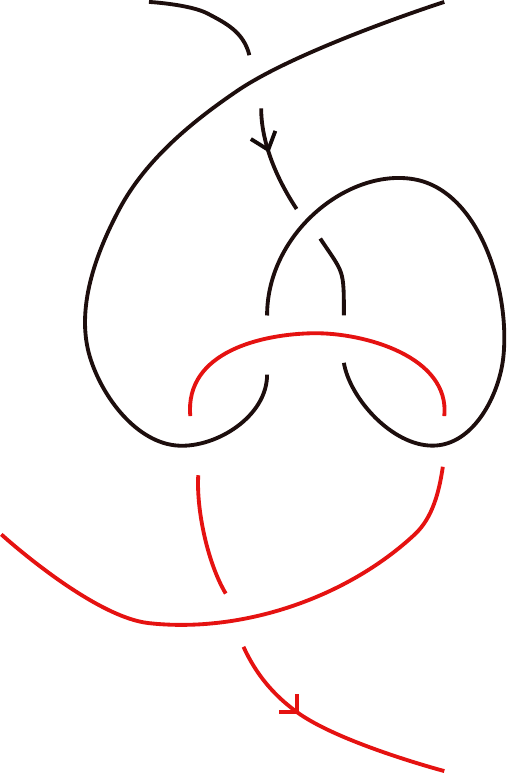}};
        \node at (8,2.2) {$\sim$};
        \node[anchor=south west,inner sep=0] at (9,0){\includegraphics[width=0.2\linewidth]{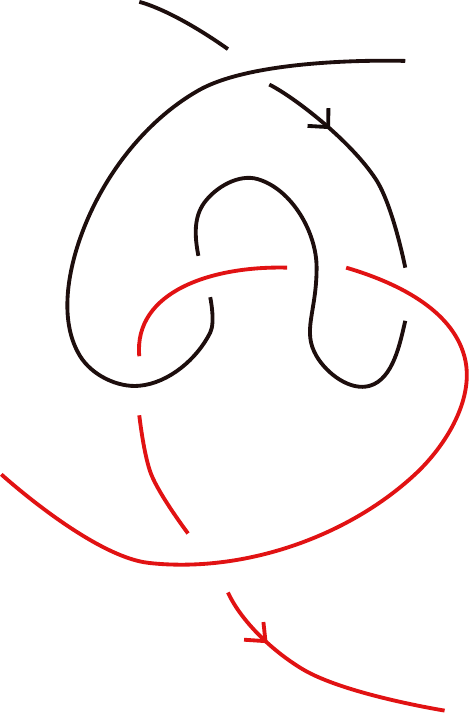}};
        \end{tikzpicture}
        \end{figure}
        
        Therefore $lk(\cK(f), \cK(g))=2$.

        \item Consider $n=3$, using the previous case we have the following local diagram.

        \begin{figure}[H]
        \centering
        \begin{tikzpicture}
        %\draw[step=1cm,color=gray] (0,0) grid (13,3);%Uncomment this to get some helpful grid lines
        \node[anchor=south west,inner sep=0] at (0,0){\includegraphics[width=0.3\linewidth]{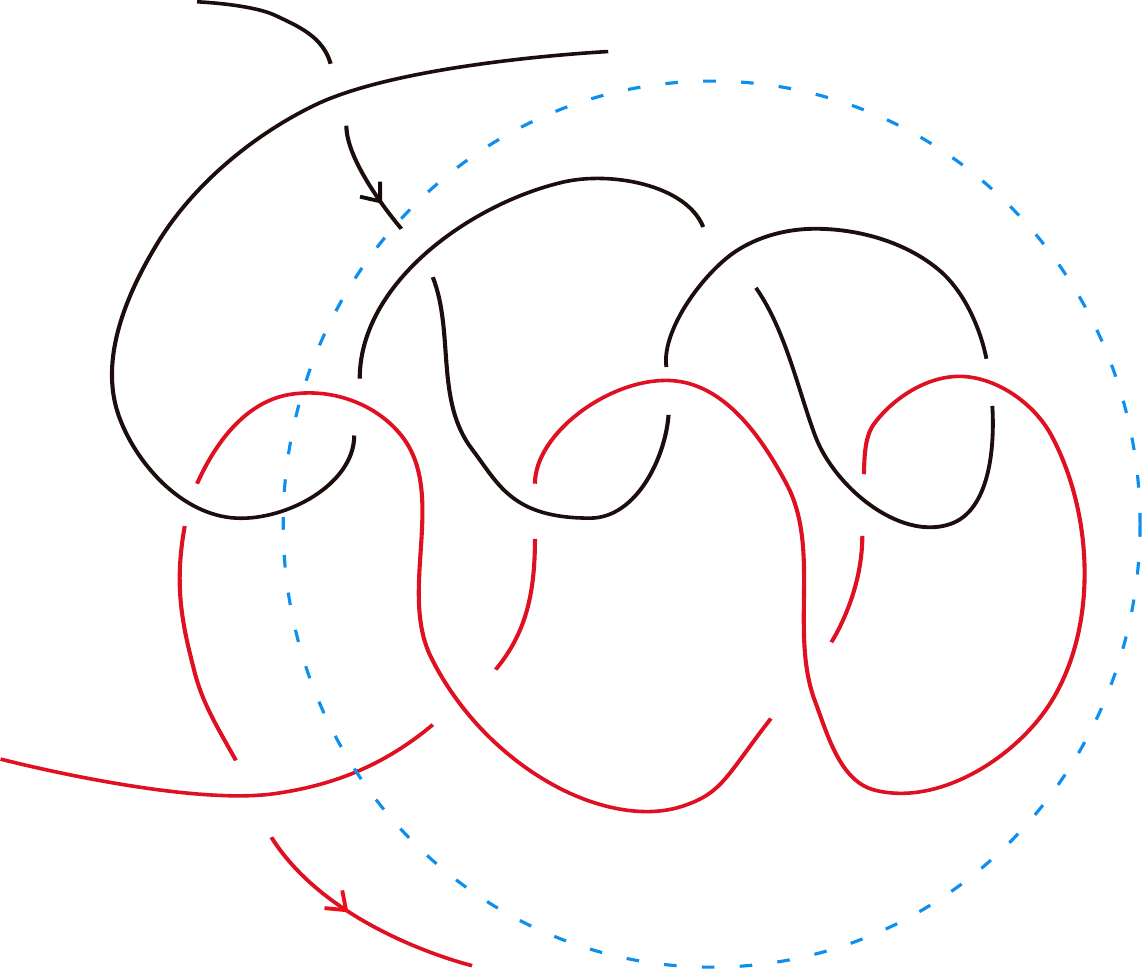}};
        \node at (5,1.5) {$\sim$};
        \node[anchor=south west,inner sep=0] at (6,0){\includegraphics[width=0.3\linewidth]{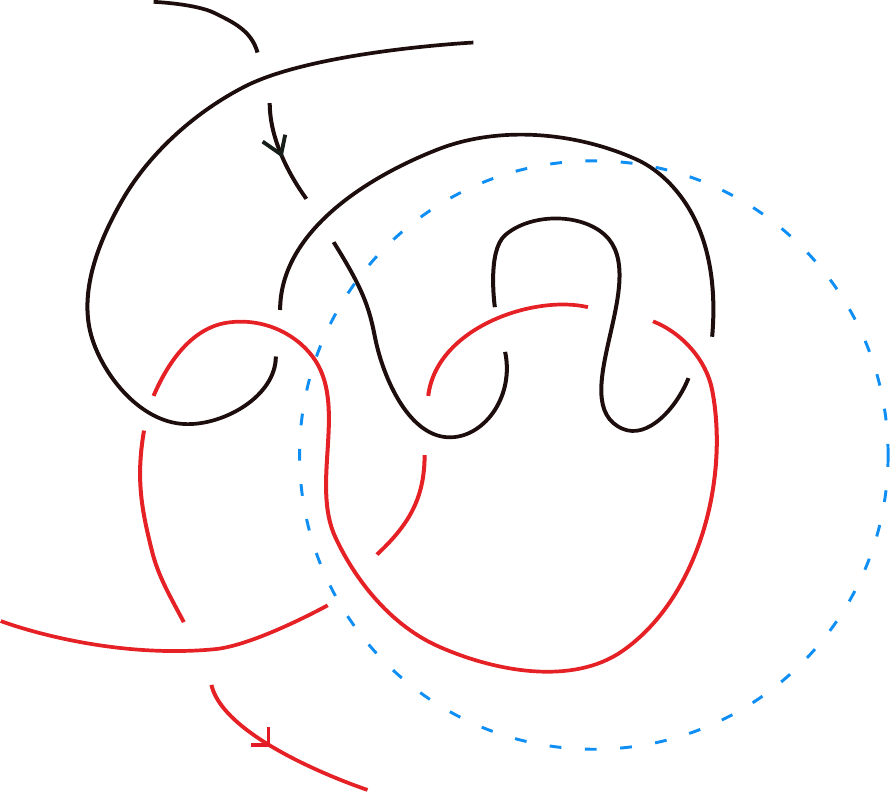}};
        \end{tikzpicture}
        \end{figure}
        
        Using Reidemeister moves, this is equivalent to

        \begin{figure}[H]
        \centering
        \begin{tikzpicture}
        %\draw[step=1cm,color=gray] (0,0) grid (13,3);%Uncomment this to get some helpful grid lines
        \node[anchor=south west,inner sep=0] at (0,1){\includegraphics[width=0.27\linewidth]{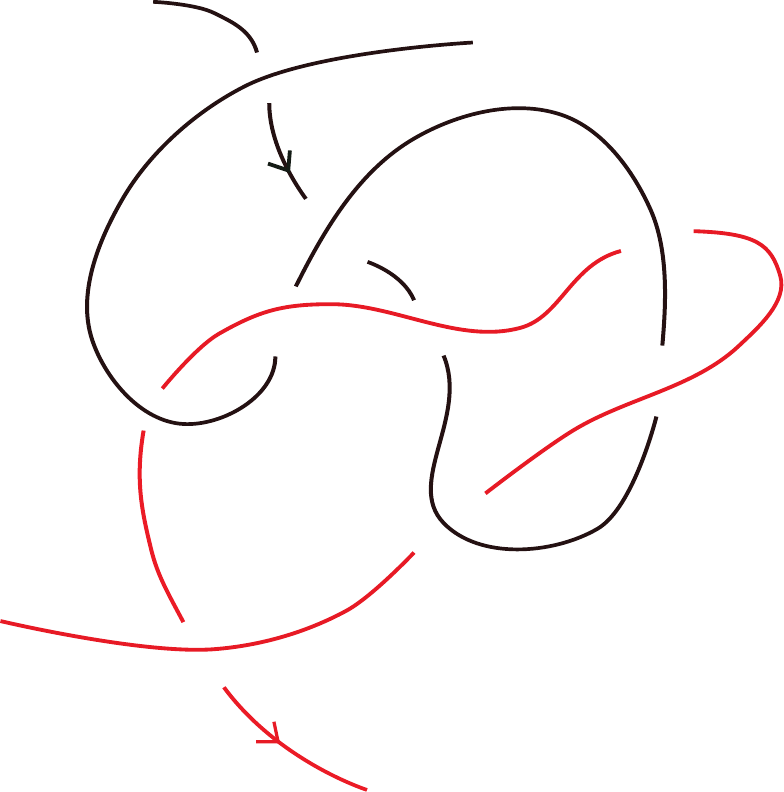}};
        \node at (4.3,2.5) {$\sim$};
        \node[anchor=south west,inner sep=0] at (5,0){\includegraphics[width=0.3\linewidth]{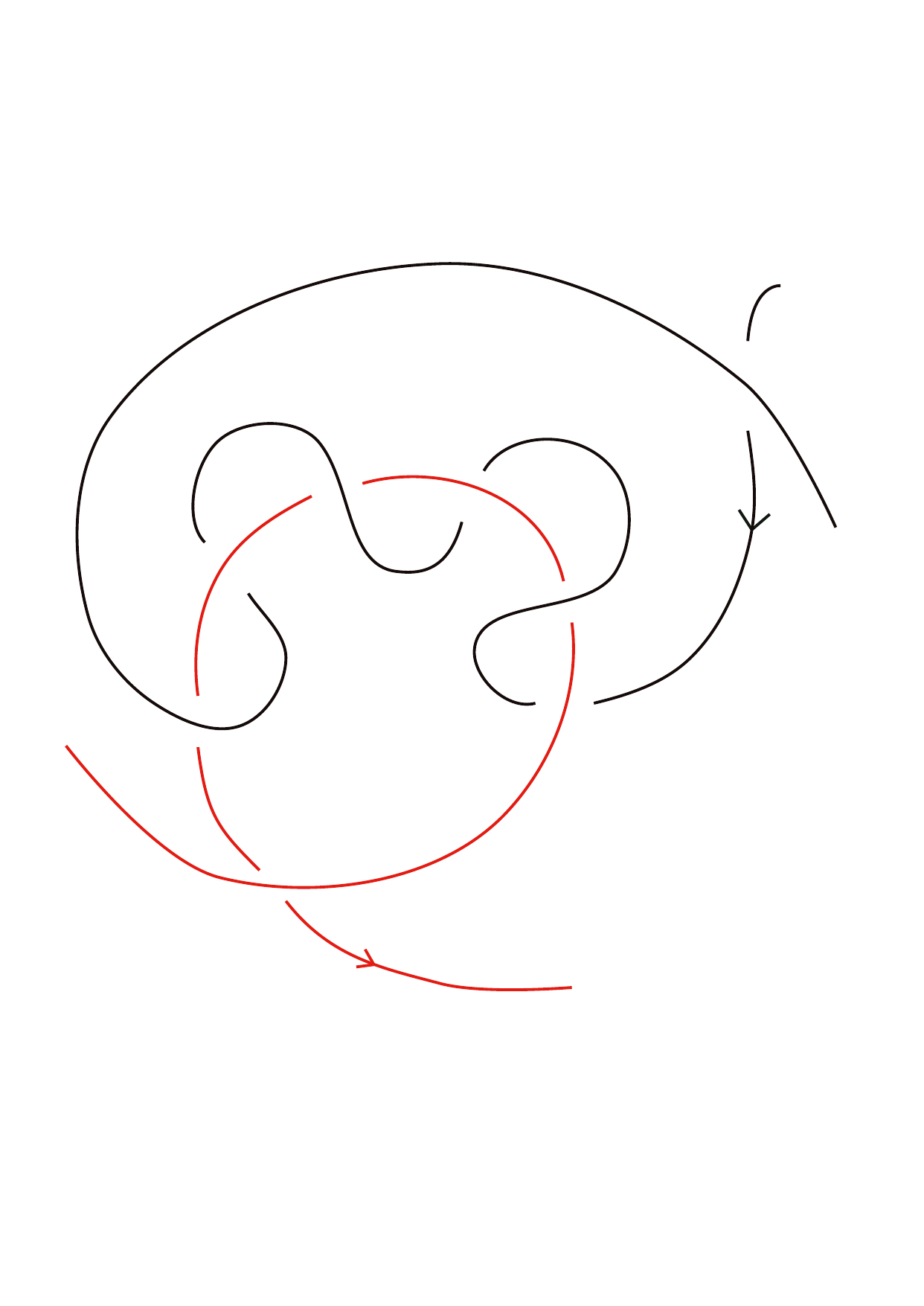}};
        \end{tikzpicture}
        \end{figure}    
        as required.
        
        \item Suppose the result holds for $n$. Consider the link $\cL(l_{n+1}(f,g))$, represented locally by:
        
        \begin{figure}[H]
        \centering
        \begin{tikzpicture}
        %\draw[step=1cm,color=gray] (0,0) grid (6,6);%Uncomment this to get some helpful grid lines
        \node[anchor=south west,inner sep=0] at (0,0){\includegraphics[width=0.5\textwidth]{img/knots/linking_proof/Knot_link_n.pdf}};
        \node at (4,1.5) {$\cdots$};
        \node at (3.9,1.8) {$n-3$};
        \node at (4,3.5) {$\cdots$};
        \node at (3.9,3.8) {$n-3$};
        \node at (0.8,5.3) {$\cK(f)$};
        \node at (5,0.6) {$\cK(g)$};
        \end{tikzpicture}
        \label{fig:link bn of f g}
        \end{figure}

        By the inductive hypothesis, we have
        \begin{figure}[H]
        \centering
        \begin{tikzpicture}
        %\draw[step=1cm,color=gray] (0,0) grid (6,6);%Uncomment this to get some helpful grid lines
        \node[anchor=south west,inner sep=0] at (0,0){\includegraphics[width=0.51\textwidth]{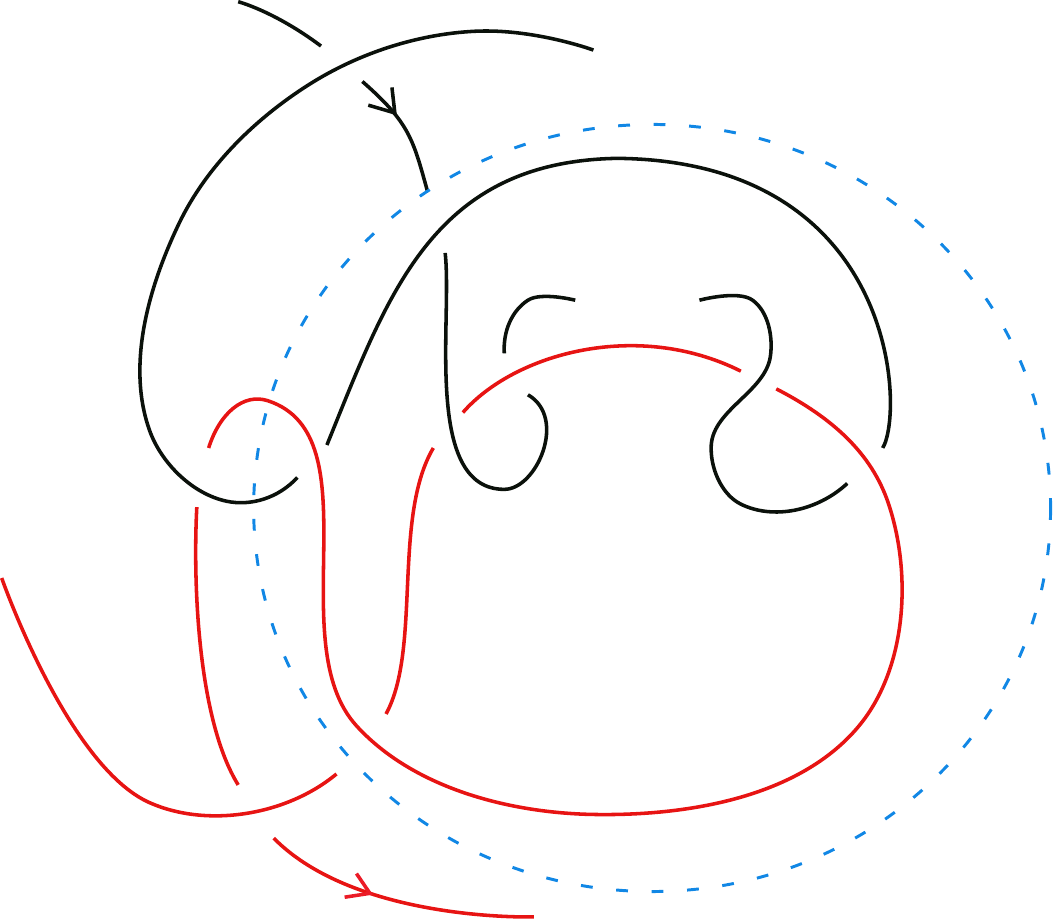}};
        \node at (3.95,3.8) {$\cdots$};
        \node at (3.95,4) {$n$};
        \node at (0.8,5.3) {$\cK(f)$};
        \node at (5.3,0.5) {$\cK(g)$};
        \end{tikzpicture}
        \end{figure}
        
        We conclude by applying Reidemeister moves to obtain the desired local diagram.

        \begin{figure}[H]
        \begin{tikzpicture}
        %\draw[step=1cm,color=gray] (0,0) grid (3,3);%Uncomment this to get some helpful grid lines
        \node[anchor=south west,inner sep=0] at (0,0){\includegraphics[width=0.42\textwidth]{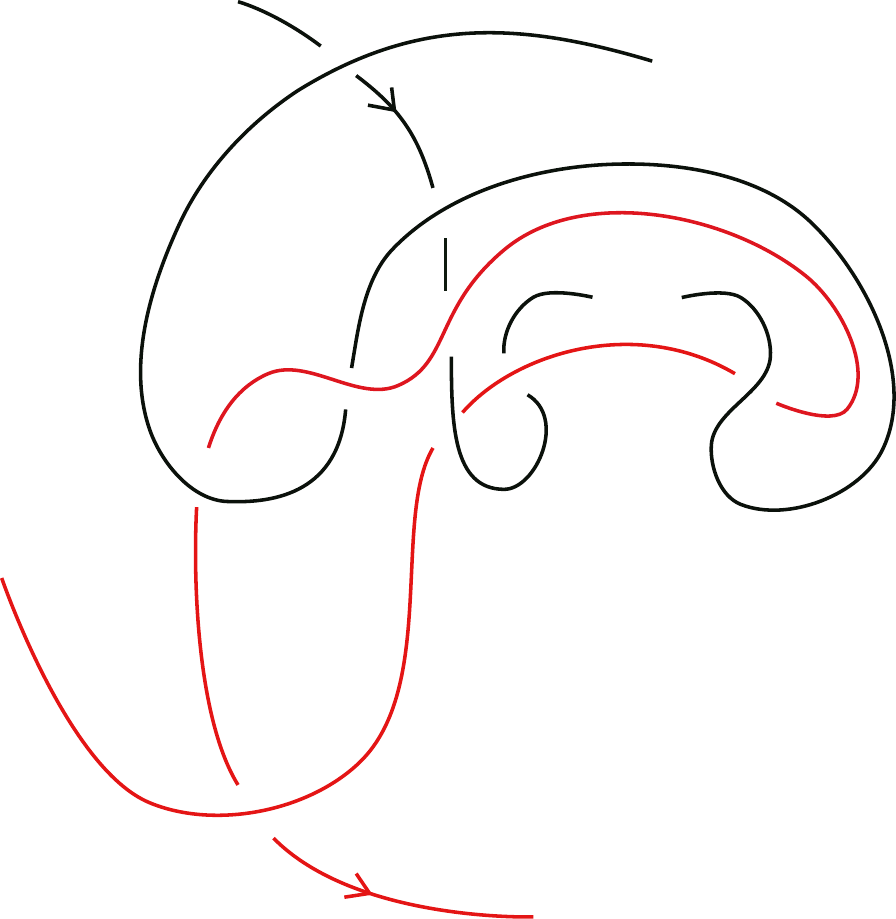}};
        \node at (3.85,3.7) {$\cdots$};
        \node at (3.85,3.9) {$n$};
        \node at (6,2.5) {$\sim$};
        \node[anchor=south west,inner sep=0] at (6.3,0){\includegraphics[width=0.42\textwidth]{img/knots/linking_proof/knot_link_general.pdf}};
        \node at (8.5,3.5) {$\cdots$};
        %\node at (8.5,3.7) {$n$};
        \end{tikzpicture}
        \qedhere
        \end{figure} 
    \end{itemize}
\end{proof}

Let $\Gamma=(V(\Gamma), E(\Gamma))$ be a finite tree endowed with a labelling on its edges, i.e.~a map $l\colon E(\Gamma)\to \Z$. Suppose $V(\Gamma)=\{v_1, \dots, v_m\}$ and let~$K_1, \dots, K_m$ be oriented knots.

\begin{definition}
\label{def:tree links}
    The \textit{tree link} $L(\Gamma; K_1, \dots, K_m)$ is defined as the link obtained by taking the disjoint union $K_1\sqcup\dots\sqcup K_m$ and, for each edge~$e_{ij}\in E(\Gamma)$, linking the components $K_i$ and $K_j$ as in \cref{fig:linking tree components}, so that $\lk(K_i,K_j)=l(e_{ij})$. 

    \begin{figure}[H]
        \begin{tikzpicture}
        %\draw[step=1cm,color=gray] (0,0) grid (7,3);%Uncomment this to get some helpful grid lines
        \node[anchor=south west,inner sep=0] at (0,0){\includegraphics[width=0.4\textwidth]{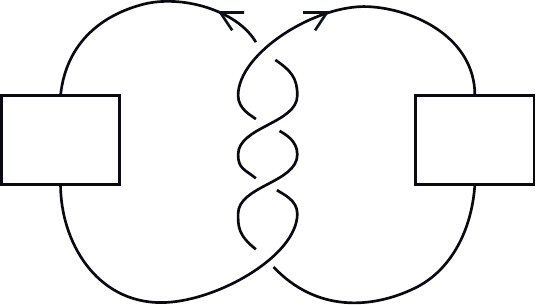}};
        \node at (0.6,1.55) {$K_i$};
        \node at (4.5,1.5) {$K_j$};
        \end{tikzpicture}
        \caption{Linking $K_i$ and $K_j$ for $l(e_{ij})=+2$.}
        \label{fig:linking tree components}
    \end{figure}   
\end{definition}

If $K_i$ is the unknot for all $i=1, \dots ,m$ we call the tree link an  \textit{iterated Hopf-tree}. A special case of iterated Hopf-trees is given by chain links, which are obtained by only taking labels $\pm 1$ -- see \cref{fig:iterated hopf}.

\begin{figure}[H]
        \begin{tikzpicture}
        %\draw[step=1cm,color=gray] (0,0) grid (7,3);%Uncomment this to get some helpful grid lines
        \node[anchor=south west,inner sep=0] at (0.7,0.8){\includegraphics[width=0.25\textwidth]{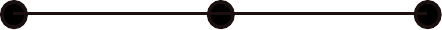}};
        \node at (0,1) {$\Gamma:$};
        \node at (0.8,0.5) {$v_2$};
        \node at (2.3,0.5) {$v_1$};
        \node at (3.8,0.5) {$v_3$};
        \node at (1.5,1.3) {$1$};
        \node at (3,1.3) {$-1$};
        \node at (4.9,1) {$\rightsquigarrow$};
        \node[anchor=south west,inner sep=0] at (5.7,0){\includegraphics[width=0.4\textwidth]{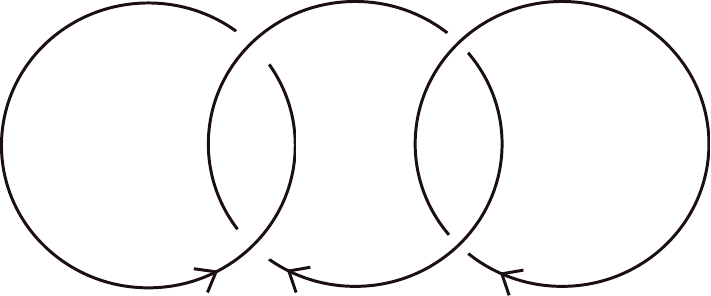}};
        \end{tikzpicture}
        \caption{A tree $\Gamma$ and the link $L(\Gamma; U,U,U)$.}
        \label{fig:iterated hopf}
\end{figure}

Using the generalised linking moves we can now construct representatives in $F_3$ for all tree links $L=L(\Gamma; K_1, \dots, K_m)$. For the interested reader, a step by step example of construction following the proof below is presented in \cref{example:tree rep}.

\begin{theorem}
\label{thm:rep tree links}
    Using $-\diamond_{\alpha}-$, $-\sqcup -$ and the linking moves, we can construct a representative $f\in F_3$ for any tree link starting from representatives of its components.
\end{theorem}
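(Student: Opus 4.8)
The plan is to root the tree $\Gamma$ at a vertex $v_r$ and build the representative by induction on the number of vertices, strengthening the statement so that the element produced always realises the root's knot as its central component. Precisely, I would prove: for any finite labelled tree rooted at $v_r$, and any representatives $f_1,\dots,f_m$ of $K_1,\dots,K_m$, there is $f\in F_3$ with $\cL(f)=L(\Gamma;K_1,\dots,K_m)$ and $\cK(f)=K_r$. The clause on the central component is what makes the induction self-feeding, since the linking moves and the disjoint union are defined through central components.

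For the base case (a single vertex) I would take $f=f_r$, so that $\cL(f)=K_r=\cK(f)$. For the inductive step, suppose the root $v_r$ has children $v_{c_1},\dots,v_{c_k}$, where the subtree below $v_{c_i}$ is joined to $v_r$ by an edge labelled $n_i=l(e_{rc_i})$. By the inductive hypothesis each such subtree, rooted at $v_{c_i}$, has a representative $g_i$ with $\cK(g_i)=K_{c_i}$. I would then set $h_0=f_r$, process the children one at a time via
\[
    h_i=\begin{cases} l_{n_i}(h_{i-1},g_i) & n_i\neq 0,\\ \sqcup(h_{i-1},g_i) & n_i=0,\end{cases}
\]
and output $f=h_k$; a nonzero label is realised by the corresponding linking move $l_{n_i}$ (with $l_{-|n_i|}$ taken when $n_i<0$), while a zero label is realised by splitting the two sides of the tree with a disjoint union.

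The argument then rests on three invariants of each step $h_{i-1}\mapsto h_i$. First, the central component is preserved. For the linking move this is because the leaf of $H^{n_i}$ carrying the address $12$, where the first argument is grafted, lies on the central component $\cK(H^{n_i})$ of the Hopf link; since that component is an unknot, the connected sum there gives $\cK(h_i)=\cK(H^{n_i})\#\cK(h_{i-1})=\cK(h_{i-1})=K_r$. For the disjoint union $\sqcup(h_{i-1},g_i)=\varphi_1(h_{i-1})\varphi_0(g_i)$ it is because $\varphi_1$ grafts $h_{i-1}$ along the address $1$, which carries the central leaf, so that $\cK(\sqcup(h_{i-1},g_i))=\cK(h_{i-1})=K_r$. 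Second, each step installs exactly the prescribed linking of $K_r$ with $K_{c_i}$: for $n_i\neq 0$ this is the content of the proposition above computing $\lk(\cK(f),\cK(g))=n$ for $l_n(f,g)$, and for $n_i=0$ the two sides are split, so the linking number is $0$.

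The third invariant, that the step leaves every previously built linking intact and creates no spurious ones, is where the real work lies and is the step I expect to be the main obstacle. Both operations are realised by connected sums localised at a single chosen leaf (see \cref{rmk:connected sum on alfa}), so I would argue, by reading off the local diagrams of \cref{fig:positive bridge link,fig:negative bridge link,fig:linking two components general}, that the clasp introduced at stage $i$ wraps only around the central strand $K_r$ at the grafting site, and not around the earlier children $K_{c_1},\dots,K_{c_{i-1}}$ or their descendants; hence no new linking is created between $K_{c_i}$ and the rest, and, by locality of connected sum, the linkings already present in $h_{i-1}$ and in $g_i$ survive unchanged. Granting these invariants, $h_k$ represents $L(\Gamma;K_1,\dots,K_m)$ with central component $K_r$, which closes the induction. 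The three tools appear exactly as expected: $-\diamond_\alpha-$ inside the linking moves (and in assembling the given component representatives), the linking moves $l_{\pm n}$ for the nonzero-labelled edges, and $-\sqcup-$ for the zero-labelled ones.
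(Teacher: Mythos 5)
Your proof is correct, but it runs the induction differently from the paper. The paper inducts on the number of vertices by peeling off a leaf $v_m$ of $\Gamma$: it builds $f'$ for $L(\Gamma\setminus\{v_m\};K_1,\dots,K_{m-1})$, then replaces $g_i$ (the representative sitting at the neighbour $v_i$ of $v_m$) by $1_{F_3}$ to leave an unknotted slot, forms the linking move $l_n(g_i,g_m)$ on the two \emph{fresh} representatives, and grafts it into that slot via $\tilde f'\diamond_{\alpha_i}l_n(g_i,g_m)$. This surgery is needed precisely because $K_i$ need not be the central component of $f'$, whereas the linking moves only see central components. You instead root the tree and strengthen the induction hypothesis so that the root knot \emph{is} always the central component, which lets you apply $l_{n_i}(h_{i-1},g_i)$ and $\sqcup(h_{i-1},g_i)$ directly with the partial representative as first argument, with no replacement trick. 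The trade-off: your route needs the invariants that $l_n$ and $\sqcup$ preserve the first argument's central component (both true, and implicitly used by the paper when it asserts that the central component of $\cL(l_n(g_i,g_m))$ is $K_i$), while the paper's route works with an arbitrary representative $f'$ at the cost of the $\tilde f'$ modification and of tracking the attachment address $\alpha_i$. Your worry about the third invariant (no spurious linkings, old linkings preserved) is resolved exactly as you suggest, and exactly as in the paper: both operations are graftings at a single leaf, i.e.\ local connected sums in the sense of \cref{rmk:connected sum on alfa}, so everything outside the clasp is carried inside a disk and linking numbers elsewhere are unchanged; the paper dispenses with this by saying ``by construction,'' so your level of rigour matches it. One cosmetic point: keep your $h_{i-1},g_i\neq 1_{F_3}$ when invoking $\sqcup$, since the paper only defines it on $F_3\setminus\{1_{F_3}\}$.
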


\begin{proof}
    To construct a representative $f\in F_3$, we proceed by induction on the number $m$ of components of the given tree link. Recall that, by \cref{thm:Alexander type thm}, for any $K_1, \dots, K_m$ knots there exist $g_1, \dots, g_m\in F_3$ such that $\cL(g_i)=K_i$ for all $i=1, \dots, m$. 

    If $m=1$ the tree link corresponds to a single knot $K_1$, hence we conclude by taking $f=g_1$.

    Suppose $\Gamma$ has $m$ vertices and we have constructed a representative for any tree link with up to $m-1$ components. If necessary, rename the vertices of $\Gamma$ so that $v_m$ is a leaf. Then there exists $i=1, \dots, m-1$ such that the component $K_m$ has $\lk(K_m, K_i)=n\in\Z$ and $\lk(K_m, K_j)=0$ for all $j\neq i$. 
    
    Let $L'=L(\Gamma\setminus\{v_m\};K_1, \dots, K_{m-1})$ be the tree link associated to the tree $\Gamma\setminus\{v_m\}$, with components $K_1, \dots, K_{m-1}$. By induction there exists $f'\in F_3$, written in terms of $-\diamond_{\alpha}-$, $-\sqcup-$ and linking moves of~$g_1, \dots, g_i, \dots, g_{m-1}$, such that~$\cL(f')=L'$.
    
    If $n=0$ set $f=\sqcup(f',g_m)$ and conclude. If $n\neq0$, denote by $\alpha_i$ the address of the leaf at which $g_i$ was attached in the construction  of $f'$. Let $\Tilde{f'}$ be the element obtained from $f'$ by replacing $g_i$ with $1_{F_3}$, then 
    \[
        f=\Tilde{f'}\diamond_{\alpha_i} l_n(g_i,g_m).
    \]
    The link $\cL(f)$ is, by construction, the connected sum of the unknot component $K'_i$ of $\cL(\Tilde{f'})$ and the central component of $\cL(l_n(g_i,g_m))$, i.e.~the knot $\cL(g_i)=K_i$. Hence, $\cL(f)=L(\Gamma; K_1, \dots, K_{m+1})$.
\end{proof}

The construction gives multiple possible representatives for the same link based on the ordering of the vertices of the associated tree, hence further possible Markov moves.

Note that, if $l(e_{ij})=0$ for all $i,j$, we recover the disjoint union representative $\sqcup(g_1, \dots, g_m)$.

We now give an example of representative obtained via the construction above. 

\begin{example}
\label{example:tree rep}
    Consider the tree $\Gamma$ and the corresponding tree link $L=L(\Gamma; U, U, U)$ of \cref{fig:iterated hopf}. Note that $L$ is an iterated Hopf-tree, hence we can take $g_i=y_0$ for all $i$. 

    First, consider the link associated to the single vertex $v_1$. Since $K_1=U$, we take as representative $g_1=y_0$. Consider now the tree
    \begin{figure}[H]
        \begin{tikzpicture}
        %\draw[step=1cm,color=gray] (0,0) grid (7,3);%Uncomment this to get some helpful grid lines
        \node[anchor=south west,inner sep=0] at (1,0.8){\includegraphics[width=0.2\textwidth]{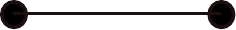}};
        \node at (0,1) {$\Gamma':$};
        \node at (1.3,0.5) {$v_2$};
        \node at (3.5,0.5) {$v_1$};
        \node at (2.4,1.3) {$1$};
        \node at (3.9,1) {,};
        \end{tikzpicture}
    \end{figure}

    then $\lk(K_2, K_1)=1$ and the representative for $L'=L(\Gamma'; U, U)$ is given by $f'=1_{F_3}\diamond l_+(y_0,y_0)$, i.e.~the element
    \begin{figure}[H]
        \begin{tikzpicture}
        %\draw[step=1cm,color=gray] (0,0) grid (7,3);%Uncomment this to get some helpful grid lines
        \node[anchor=south west,inner sep=0] at (1,0.8){\includegraphics[width=0.3\textwidth]{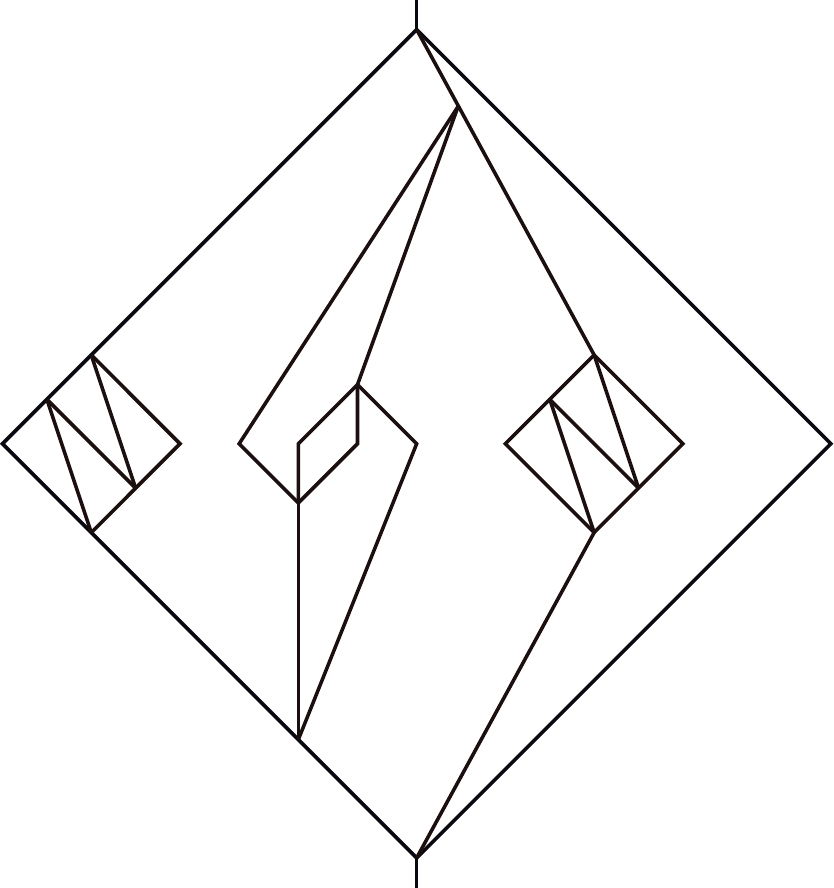}};
        \node at (0,2.9) {$f'=$};
        \node at (5.3,2.9) {.};
        \end{tikzpicture}
    \end{figure}

    Finally, take the tree $\Gamma$. The vertex $v_3$ is a leaf and the tree link~$L$ has $\lk(K_3, K_1)=-1$. In the construction of $f'$, the element $g_1$ was attached in place of the address $\alpha=12$, therefore we obtain $f$ from $f'$ as 
    \[
        f=\Tilde{f'}\diamond_{12} l_-(g_1,g_3)=\Tilde{f'}\diamond_{12} l_-(y_0,y_0),
    \]
    which corresponds to
    \begin{figure}[H]
    \centering
    \begin{tikzpicture}
    %\draw[step=1cm,color=gray] (0,0) grid (10,4);%Uncomment this to get some helpful grid lines
    \node[anchor=south west,inner sep=0] at (0,0){\includegraphics[width=0.27\textwidth]{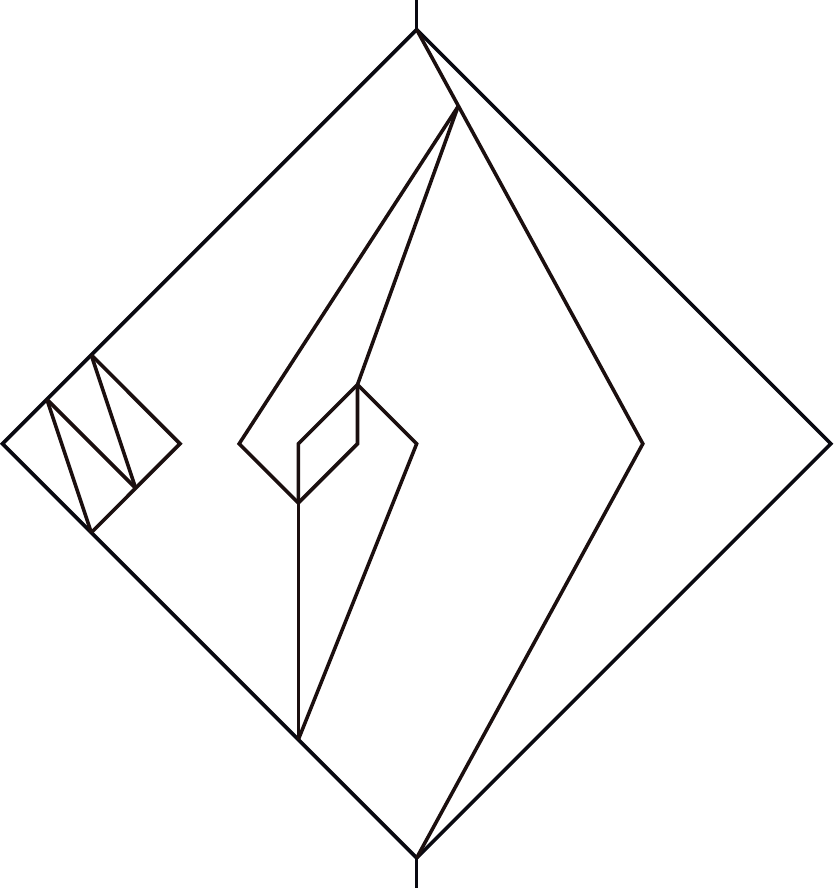}};
    \node at (3.9,1.8) {$\diamond_{12}$};
    \node[anchor=south west,inner sep=0] at (4.3,0){\includegraphics[width=0.27\textwidth]{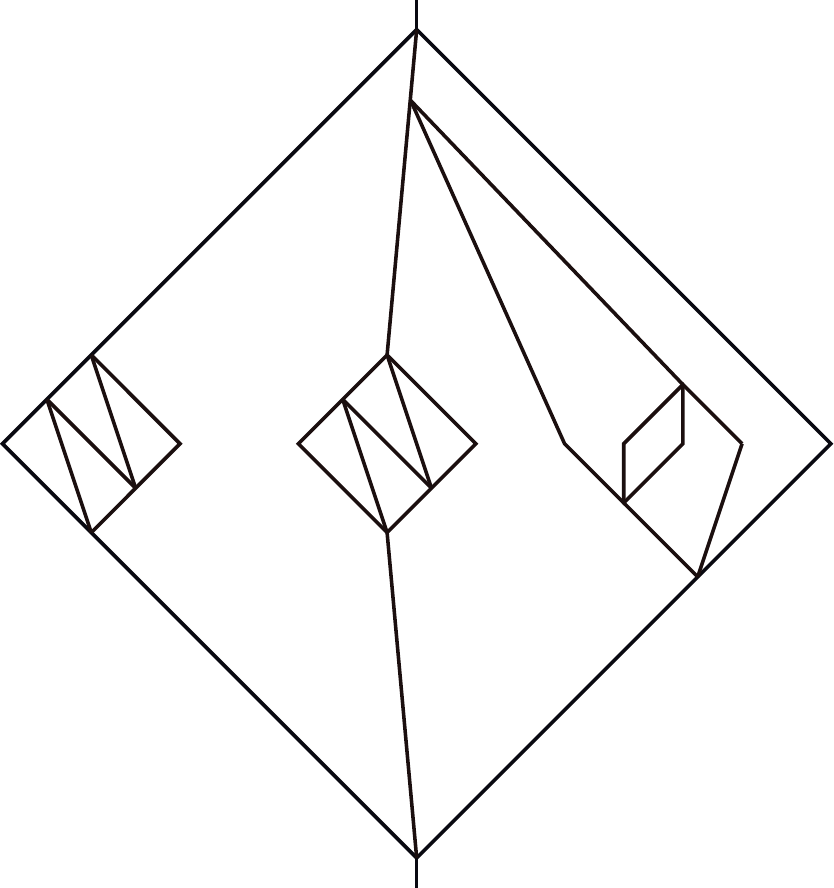}};
    \node at (8.15,1.8) {$=$};
    \node[anchor=south west,inner sep=0] at (8.6,0){\includegraphics[width=0.27\textwidth]{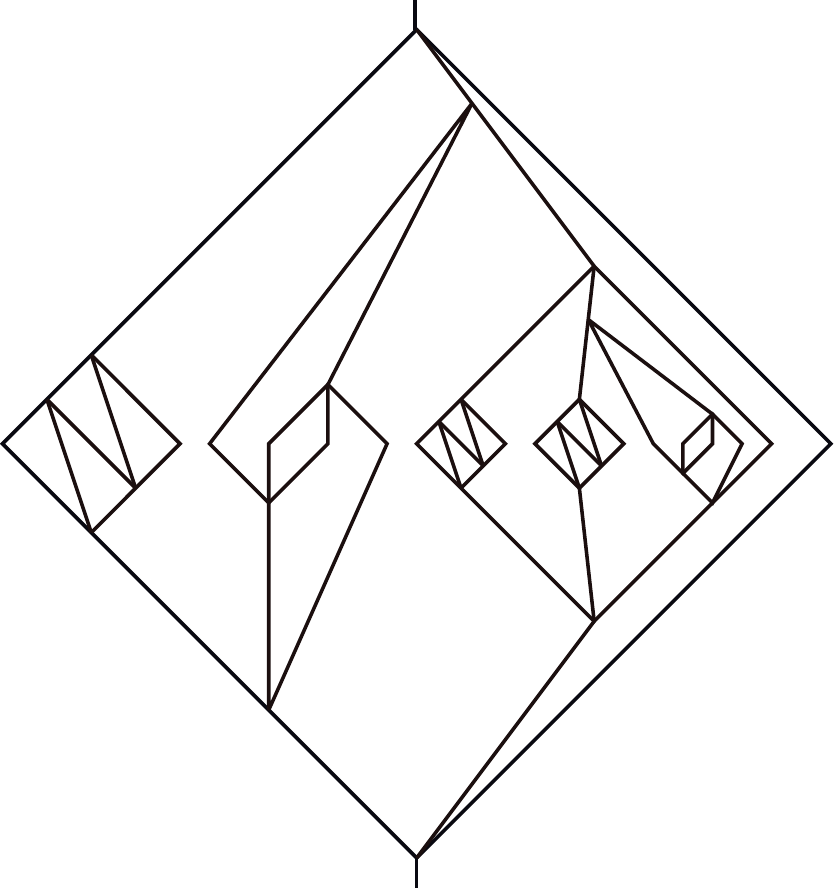}};
    \node at (12.4,1.8) {.};
    \end{tikzpicture}
    \end{figure}
    
\end{example}

We conclude with the following result combining \cref{thm:rep tree links} with prime decomposition (\cref{thm:prime decomposition}).

\begin{restate}{Theorem}{thm:tree rep}
    For all tree links one can construct a Thompson representative, up to prime components, using $-\diamond_{\alpha}-$, $-\sqcup -$ and the linking moves.
\end{restate}

\bibliographystyle{amsxport}
\bibliography{refs}

@article {aiello2020,
    AUTHOR = {Aiello, Valeriano},
     TITLE = {On the {A}lexander theorem for the oriented {T}hompson group
              {$\vec{F}$}},
   JOURNAL = {Algebr. Geom. Topol.},
  FJOURNAL = {Algebraic \& Geometric Topology},
    VOLUME = {20},
      YEAR = {2020},
    NUMBER = {1},
     PAGES = {429--438},
}

@article {AielloNagnibeda2022,
    AUTHOR = {Aiello, Valeriano and Nagnibeda, Tatiana},
     TITLE = {On the oriented {T}hompson subgroup {$\vec{F}_3$} and
              its relatives in higher {B}rown-{T}hompson groups},
   JOURNAL = {J. Algebra Appl.},
  FJOURNAL = {Journal of Algebra and its Applications},
    VOLUME = {21},
      YEAR = {2022},
    NUMBER = {7},
     PAGES = {Paper No. 2250139, 21},
}

@article {AielloBaader2024,
    AUTHOR = {Aiello, Valeriano and Baader, Sebastian},
     TITLE = {Positive oriented {T}hompson links},
   JOURNAL = {Comm. Anal. Geom.},
  FJOURNAL = {Communications in Analysis and Geometry},
    VOLUME = {32},
      YEAR = {2024},
    NUMBER = {6},
     PAGES = {1593--1616},
}

@article{aiello2022computationalstudynumberconnected,
    author = {Valeriano Aiello and Stefano Iovieno},
    title = {A computational study of the number of connected components of positive Thompson links},
    journal = {arXiv: 2212.12556},
    year = {2022},
}

@book {BelkThesis,
    AUTHOR = {Belk, James Michael},
     TITLE = {Thompsons' group {F}},
      NOTE = {Thesis (Ph.D.)--Cornell University},
 PUBLISHER = {ProQuest LLC, Ann Arbor, MI},
      YEAR = {2004},
     PAGES = {143},
}

@article {CannonFloydParry,
    AUTHOR = {Cannon, James W. and Floyd, William J. and Parry, Walter R.},
     TITLE = {Introductory notes on {R}ichard {T}hompson's groups},
   JOURNAL = {Enseign. Math. (2)},
  FJOURNAL = {L'Enseignement Math\'ematique. Revue Internationale. 2e
              S\'erie},
    VOLUME = {42},
      YEAR = {1996},
    NUMBER = {3-4},
     PAGES = {215--256},
}

@article {GubaSapir97,
    AUTHOR = {Guba, Victor and Sapir, Mark},
     TITLE = {Diagram groups},
   JOURNAL = {Mem. Amer. Math. Soc.},
  FJOURNAL = {Memoirs of the American Mathematical Society},
    VOLUME = {130},
      YEAR = {1997},
    NUMBER = {620},
     PAGES = {viii+117},
}

@article {Jones2017,
    AUTHOR = {Jones, Vaughan F. R.},
     TITLE = {Some unitary representations of {T}hompson's groups {$F$} and
              {$T$}},
   JOURNAL = {J. Comb. Algebra},
  FJOURNAL = {Journal of Combinatorial Algebra},
    VOLUME = {1},
      YEAR = {2017},
    NUMBER = {1},
     PAGES = {1--44},
}

@incollection {Jones2019,
    AUTHOR = {Jones, Vaughan F. R.},
     TITLE = {On the construction of knots and links from {T}hompson's
              groups},
 BOOKTITLE = {Knots, low-dimensional topology and applications},
    SERIES = {Springer Proc. Math. Stat.},
    VOLUME = {284},
     PAGES = {43--66},
 PUBLISHER = {Springer, Cham},
      YEAR = {2019},
}

@article{kodama2023alexanderstheoremstabilizersubgroups,
    author = {Yuya Kodama and Akihiro Takano},
    title = {Alexander's theorem for stabilizer subgroups of Thompson's group},
    journal = {arXiv: 2306.13398},
    year = {2023},
}

@article{KodamaTakano24,
    author = {Yuya Kodama and Akihiro Takano},
    title = {The p-colorable subgroup of Thompson’s group},
    journal = {Groups Geom. Dyn.},
    year = {2024},
}

@article{krushkal2024thompsonsgroupftangles,
    author = {Vyacheslav Krushkal and Louisa Liles and Yangxiao Luo},
    title = {Thompson's group {$F$}, tangles, and link homology},
    journal = {arXiv: 2403.16838},
    year = {2024},
}

@article{liles2024annularlinksthompsonsgroup,
    author = {Louisa Liles},
    title = {Annular Links from Thompson's Group {$T$}},
    journal = {arXiv: 2401.12065},
    year = {2024},
}

@book {Lickorish97,
    AUTHOR = {Lickorish, W. B. Raymond},
     TITLE = {An introduction to knot theory},
    SERIES = {Graduate Texts in Mathematics},
    VOLUME = {175},
 PUBLISHER = {Springer-Verlag, New York},
      YEAR = {1997},
     PAGES = {x+201},
}

@article {RushilSweeney,
    AUTHOR = {Raghavan, Rushil and Sweeney, Dennis},
     TITLE = {Regular isotopy classes of link diagrams from {T}hompson's
              groups},
   JOURNAL = {J. Knot Theory Ramifications},
  FJOURNAL = {Journal of Knot Theory and its Ramifications},
    VOLUME = {34},
      YEAR = {2025},
    NUMBER = {3},
     PAGES = {Paper No. 2340023, 18},
}

\end{document}